\definecolor{darkgreen}{rgb}{0,0.5,0}
\definecolor{lightblue}{rgb}{0.4,0.4,0.8}
\numberwithin{equation}{section}
\newtheorem{thm}{Theorem}
\newtheorem{prop}[thm]{Proposition}
\newtheorem{lemma}[thm]{Lemma}
\newtheorem{cor}[thm]{Corollary}
\newtheorem{conjecture}[thm]{Conjecture}
\theoremstyle{remark}
\newtheorem{rem}[thm]{Remark}
\theoremstyle{definition}
\newtheorem{defn}[thm]{Definition}
\newtheorem{problem}[thm]{Problem}
\newcommand{\bA}{\mathbb A}
\newcommand{\bC}{\mathbb C}
\newcommand{\rd}{\mathrm d}
\newcommand{\bF}{\mathbb F}
\newcommand{\rH}{\mathrm H}
\newcommand{\MT}{\mathrm{MT}}
\newcommand{\bP}{\mathbb P}
\newcommand{\bQ}{\mathbb Q}
\newcommand{\bZ}{\mathbb Z}
\newcommand{\cO}{\mathcal{O}}
\newcommand{\et}{\operatorname{\acute{e}t}}
\DeclareMathOperator{\dR}{dR}
\DeclareMathOperator{\loc}{loc}
\DeclareMathOperator{\Li}{Li}
\DeclareMathOperator{\PL}{PL}
\DeclareMathOperator{\Sel}{Sel}
\DeclareMathOperator{\Spec}{Spec}
\numberwithin{thm}{section}
\numberwithin{table}{section}
\title[Refined Chabauty--Kim computations for the thrice-punctured line]{Refined Chabauty--Kim computations for the thrice-punctured line over $\bZ[1/6]$}
\author{Martin Lüdtke}
\begin{document}
	
	\begin{abstract}
		The Chabauty--Kim method and its refined variant by Betts and Dogra aim to cut out the $S$-integral points $X(\bZ_S)$ on a curve inside the $p$-adic points $X(\bZ_p)$ by producing enough Coleman functions vanishing on them. We derive new functions in the case of the thrice-punctured line when $S$ contains two primes. We describe an algorithm for computing refined Chabauty--Kim loci and verify Kim's Conjecture over $\bZ[1/6]$ for all choices of auxiliary prime~$p < 10{,}000$.
	\end{abstract}
	
	\maketitle
	
	\setcounter{tocdepth}{1}
	\tableofcontents
	
	\thispagestyle{empty}
	
	\section{Introduction}
	\label{sec: intro}
	
	Let $S$ be a finite set of primes and let $X = \bP^1 \smallsetminus \{0,1,\infty\}$ be the thrice-punctured line over the ring of $S$-integers $\bZ_S$. By the Siegel--Mahler theorem, the set of $S$-integral points $X(\bZ_S)$ is finite.\footnote{The finiteness of $X(\bZ_S)$ is often referred to as Siegel's theorem but it was first proved by Mahler \cite[Folgerung~2]{mahler33} in 1933. He uses a generalisation of Siegel's proof from 1921 of the finiteness of the set of \emph{integral} points $X(\cO_K)$ over a general number field~$K$. Siegel's result is not stated explicitly but it can be deduced from \cite[Satz 7, Zusatz~1]{siegel1921} which implies that the polynomial $f(x) = x(1-x)$ represents a unit of~$K$ only finitely many times as $x$ runs through the ring of integers of~$K$. 
	See \cite[Summary]{evertse-gyory} for a historical overview.} Kim \cite{kim:motivic} gave a new $p$-adic proof of this fact by constructing, for any prime $p \not\in S$, a descending chain of subsets of $X(\bZ_p)$ containing $X(\bZ_S)$:
	\[ X(\bZ_p) \supseteq X(\bZ_p)_{S,1} \supseteq X(\bZ_p)_{S,2} \supseteq \ldots \quad \supseteq X(\bZ_S).  \]
	The set $X(\bZ_p)_{S,n}$ is called the Chabauty--Kim locus of depth~$n$. Kim showed that the sets $X(\bZ_p)_{S,n}$ eventually become finite, so that $X(\bZ_S)$ must be finite as well. This suggests the following strategy for computing $X(\bZ_S)$: find as many points in $X(\bZ_S)$ as possible; then compute $X(\bZ_p)_{S,n}$ for some $p \not\in S$ and $n \geq 1$. This gives a lower and an upper bound. If they match, we have found $X(\bZ_S)$. In order for this strategy to have a chance of succeeding we need that the set $X(\bZ_p)_{S,n}$ contains no $p$-adic points which are not $S$-integral points, at least for sufficiently large~$n$. In other words, the inclusion $X(\bZ_S) \subseteq X(\bZ_p)_{S,n}$ should eventually be an equality. This is the content of Kim's Conjecture \cite[Conj.~3.1 \& §8.1]{BDCKW}. The heuristic behind this is that as~$n$ grows larger, we find more and more independent Coleman functions vanishing on $X(\bZ_p)_{S,n}$, and any $p$-adic point in the intersection of their zero loci should be there for a good reason, namely being an $S$-integral point. 
	
	Computing the sets $X(\bZ_p)_{S,n}$ is difficult in practice and so far has only been achieved in cases where~$S$ contains at most one prime \cite{BDCKW,BKL:chabauty_kim_sc,DCW:mixedtate1,CDC:polylog1}. In this paper we focus on the \emph{refined} Chabauty--Kim sets $X(\bZ_p)_{S,n}^{\min}$ introduced by Betts and Dogra \cite{BD:refined}. These are potentially smaller than the sets $X(\bZ_p)_{S,n}$ but still contain $X(\bZ_S)$. It is natural to formulate Kim's Conjecture also for the refined sets:
	
	\begin{conjecture}[Refined Kim's Conjecture]
		\label{kim-conjecture-refined}
		$X(\bZ_p)_{S,n}^{\min} = X(\bZ_S)$ for $n \gg 0$.
	\end{conjecture}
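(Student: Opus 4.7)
The plan combines a dimension-count heuristic with an explicit computational verification, following the Chabauty--Kim paradigm. In general, $X(\bZ_p)_{S,n}^{\min}$ is cut out in $X(\bZ_p)$ by the vanishing of Coleman-analytic functions pulled back along the localization map from a refined Selmer scheme (in the sense of Betts--Dogra) to the de Rham unipotent path torsor of $X$. The conjecture will hold as soon as, for $n$ large, the refined Selmer scheme has dimension strictly below $\dim X(\bZ_p) = 1$; at that point the common zero locus in $X(\bZ_p)$ is zero-dimensional, automatically contains $X(\bZ_S)$, and the remaining task is to rule out spurious zeros.

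First, I would estimate the dimensions occurring in refined depth $n$ by exploiting the Mixed Tate structure of the unipotent fundamental group of $X$. These dimensions are controlled by $\dim \Ext^1(\bQ(0),\bQ(k))$ in the category of mixed Tate motives over $\bZ_S$, whose values are explicitly known (Borel, via the Bloch--Kato conjecture in the Tate case). A weight-by-weight comparison of the refined Selmer dimension against the local de Rham dimension at each graded piece identifies a threshold $n_0 = n_0(S)$ beyond which the expected excess becomes negative and hence a genuinely new refined Coleman constraint must appear --- a constraint not seen in the unrefined theory and not covered by the polylog relations used in \cite{DCW:mixedtate1,CDC:polylog1}.

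The concrete verification for $S = \{2,3\}$ then proceeds, for each auxiliary prime $p < 10{,}000$, by (i) extracting from the depth-$n$ refined Selmer scheme an explicit system of Coleman functions --- polynomial expressions in the $p$-adic polylogarithms $\Li_k$ --- that vanish on $X(\bZ_p)_{S,n}^{\min}$; (ii) locating their common zeros in $X(\bZ_p)$ using residue-disk analysis, Newton polygons and Hensel lifting, all implementable in a computer algebra system; and (iii) checking that the resulting finite list of zeros coincides with the $17$ known $\{2,3\}$-unit equation solutions comprising $X(\bZ[1/6])$.

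The main obstacle is step (i): existing explicit Chabauty--Kim identities at depth $\leq 2$ are not enough once $|S| = 2$, so one has to derive genuinely new \emph{refined} relations on the Selmer scheme in higher depth and translate them into Coleman functions on $X(\bZ_p)$ that are concrete enough for a machine to evaluate. Producing a clean, finite list of such functions --- and proving that they actually cut out $X(\bZ_p)_{S,n}^{\min}$ and not merely a larger set --- is the heart of the argument, and also the ingredient that makes the loop over all $p < 10{,}000$ computationally tractable.
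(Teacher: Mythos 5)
Your high-level plan matches the paper's: pass to explicit Coleman functions (polynomial in $\Li_k$ and $\log$) vanishing on the refined locus, find their common zeros residue disc by residue disc via Newton polygons and Hensel lifting, and check that only the known $S$-integral points survive. But there are three substantive gaps you should fix.

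First, $X(\bZ[1/6])$ has $21$ points, not $17$. More to the point, the paper never directly targets all $21$. It first proves a reduction lemma exploiting the $S_3$-equivariance of the refined Chabauty--Kim loci under the Möbius action on $\bP^1\smallsetminus\{0,1,\infty\}$: since $\sigma(X(\bZ_p)_{S,n}^{\Sigma}) = X(\bZ_p)_{S,n}^{\sigma(\Sigma)}$, and every refinement condition in $\{0,1,\infty\}^2$ is $S_3$-equivalent to either $(1,1)$ or $(1,0)$, it suffices to verify \Cref{kim-conjecture-sigma} for only these two. The $(1,1)$-locus then turns out to coincide with the $\{2\}$-integral locus (independent of $q$), so essentially all the work is concentrated on showing $X(\bZ_p)_{\{2,3\},\PL,4}^{(1,0)} = \{-3,-1,3,9\}$ --- four points, not $21$. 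Without this reduction you would need to treat all $9$ refinement conditions (and, as you observe, producing explicit refined functions is the expensive step), so this is not a cosmetic omission.

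Second, your motivational dimension-count argument establishes at best \emph{finiteness} of the locus, which is already Kim's theorem; it cannot by itself give the equality $X(\bZ_p)_{S,n}^{\min} = X(\bZ_S)$, since spurious transcendental zeros can persist for any fixed $n$ (the paper exhibits such a point at depth $2$ for $p=5$, $q=3$, which only dies at depth $4$). You acknowledge this, but it is worth being explicit that the heuristic plays no logical role in the argument; the actual content is the derivation of a second, depth-$4$ relation of the form $a\Li_4(z)+b\log(z)\Li_3(z)+c\log(z)^3\Li_1(z)=0$ restricted to the $(1,0)$-component, together with the determinant trick using the three $\{2,3\}$-integral points $3$ and $9$ to evaluate the coefficients without ever computing the motivic periods $a_{\sigma_3}$, $a_{\tau_q\sigma_3}$, etc.

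Third, the strategy of ``cut out $X(\bZ_p)_{S,n}^{\min}$ exactly and then list its points'' is stronger than what the paper does or needs. Because the coefficients are only known to finite $p$-adic precision, the paper instead computes the zero set of the single depth-$2$ function on each disc (where Strassmann/Newton arguments give a definite answer) and then \emph{excludes} all non-integral roots by showing the depth-$4$ function is nonzero there to the working precision. This one-sided strategy is what makes the loop over all $p < 10{,}000$ feasible; proving that the two functions \emph{exactly} cut out the refined Selmer image is neither claimed nor required.
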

	
	If Kim's Conjecture holds for the original unrefined sets then \Cref{kim-conjecture-refined} also holds. Recently, verifying \Cref{kim-conjecture-refined} (for a hyperbolic curve of any genus) has been proposed as a strategy for proving Grothendieck's Section Conjecture for locally geometric sections \cite[Theorem~A]{BKL:chabauty_kim_sc}. 
	In this paper we verify \Cref{kim-conjecture-refined} for the thrice-punctured line over $\bZ[1/6]$ for many choices of the auxiliary prime~$p$.
	
	\begin{thm}[= \Cref{thm:main}]
		\label{thm:main-kim-intro}
		\Cref{kim-conjecture-refined} for $S = \{2,3\}$ holds in depth~$n = 4$ for all primes~$p$ with $5 \leq p < 10{,}000$.
	\end{thm}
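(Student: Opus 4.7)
The plan is to reduce \Cref{thm:main-kim-intro} to a finite, prime-by-prime computation that uses the explicit Coleman functions produced earlier in the paper for the two-prime case. The overall strategy has three ingredients: an explicit description of the target set $X(\bZ[1/6])$, a system of Coleman functions that cut out the depth-$4$ refined Chabauty--Kim locus $X(\bZ_p)_{\{2,3\},4}^{\min}$, and a computer verification that for each prime $5 \leq p < 10{,}000$ the common $p$-adic zero locus of those functions coincides with $X(\bZ[1/6])$.

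First I would recall the classical fact that $X(\bZ[1/6])$ is the solution set of the $S$-unit equation $x + y = 1$ with $x,y \in \bZ[1/6]^\times$, and tabulate this finite list explicitly so that it can be checked reduction-mod-$p$ inside any $X(\bZ_p)$. Next, I would invoke the refined Chabauty--Kim framework of Betts--Dogra together with the new depth-$4$ Coleman functions derived earlier in the paper for $S = \{2,3\}$: these are explicit polynomials in $p$-adic polylogarithms $\Li_k(z)$ and their products, with coefficients in $\bQ_p$ that encode the Selmer/localisation data at the primes $2$ and $3$. The key structural input is that these functions vanish on $X(\bZ[1/6]) \subseteq X(\bZ_p)$, as is standard in Chabauty--Kim theory, and that the number of independent such functions is large enough that one expects their zero locus to be zero-dimensional.

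Having fixed the system of functions, the algorithm described in the paper is then run for each of the roughly $1{,}200$ primes $p$ in the range $[5, 10{,}000)$. For each $p$ one computes the relevant $p$-adic polylogarithm values to sufficient precision, assembles the Coleman functions, and determines their common zeros on each residue disk of $X(\bZ_p)$ by Newton polygon arguments or by a Hensel-type lifting from roots mod $p$. The verification amounts to checking, for every $p$, that the list of solutions produced by the solver matches the list of $S$-integral points from Step~1, and nothing more.

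The main obstacle will be the computational/precision aspect rather than anything conceptually novel. Two things have to be controlled simultaneously for every prime in the range: one must compute $p$-adic polylogarithms to enough precision that spurious zeros are excluded (this is delicate when a Coleman function vanishes to high order at an $S$-integral point, or when two $S$-integral points lie in the same residue disk), and one must certify that the finitely many roots found exhaust the zero set on each residue disk, via a bound on the valuation of the leading term of the relevant power series. A secondary difficulty is the handling of residue disks around the missing points $0, 1, \infty$, where limiting behaviour of the polylogarithms must be treated correctly. Once these precision bookkeeping issues are packaged into the algorithm, the theorem follows by running it and observing that in every case the output equals $X(\bZ[1/6])$.
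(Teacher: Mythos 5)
Your high-level strategy---produce explicit polylogarithmic Coleman functions cutting out the depth-$4$ refined locus, then for each prime $p$ compare its $p$-adic zero set against the known $21$ points of $X(\bZ[1/6])$ with careful precision bookkeeping---matches the flavour of the paper, and your precision concerns (spurious roots, certifying exhaustiveness on each residue disc via Newton polygons and Hensel lifting) are exactly the issues the paper's \Cref{thm:inexact-roots} and the \texttt{Zproots} routine handle. However, there is a genuine structural gap in the proposal.

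You work directly with the total refined locus $X(\bZ_p)_{\{2,3\},4}^{\min}$, which is a union over all $3^2=9$ refinement conditions $\Sigma\in\{0,1,\infty\}^2$, each with its own Coleman functions. The paper never computes all nine. The essential reduction step, \Cref{thm:kim-conjecture-reduction}, first passes from the full depth-$4$ quotient to the polylogarithmic quotient $U_{\PL,4}$ (using $X(\bZ_p)_{S,4}^{\Sigma}\subseteq X(\bZ_p)_{S,\PL,4}^{\Sigma}$, which is why equations in single polylogarithms $\Li_k$ suffice) and then exploits the $S_3$-equivariance $\sigma(X(\bZ_p)_{S,4}^{\Sigma})=X(\bZ_p)_{S,4}^{\sigma(\Sigma)}$ to reduce to exactly two representative orbits, $\Sigma=(1,1)$ and $\Sigma=(1,0)$. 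These have quite different defining equations---$\log(z)=\Li_2(z)=\Li_4(z)=0$ for $(1,1)$ (handled in prior work), versus the pair $f_2$, $f_4$ of \eqref{eq:f2}--\eqref{eq:f4} for $(1,0)$. Without articulating this reduction, your ``assemble the Coleman functions and find their common zeros'' step is underspecified: it is not clear which functions you are using or why they would cut out the total $\min$-locus. Also, a minor point: your worry about residue discs around $0,1,\infty$ is misplaced, since $X(\bZ_p)$ by definition consists of the residue discs $U_\zeta$ with $\zeta\neq 1$ a $(p-1)$-st root of unity, so the boundary discs never arise; the real subtlety in the computation is rather that one first solves $f_2(z)=0$ (the depth-$2$ locus) and then rules out the extra roots by evaluating $f_4$ there, which only works because the depth-$2$ locus is already finite.
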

	
	Previously, Kim's Conjecture for $S = \{2,3\}$ (refined or unrefined) was not known to hold for any prime~$p$. For $S = \{2\}$, \Cref{kim-conjecture-refined} can be proved for all odd primes~$p$ by purely algebraic reasoning \cite[Theorem~B]{BKL:chabauty_kim_sc}. In contrast, our proof of \Cref{thm:main-kim-intro} uses a combination of theoretical results and computer calculations. 
	
	For the most part we work with $S = \{2,q\}$ for an arbitrary odd prime~$q$. The refined Chabauty--Kim method for such sets was first applied in depth~$n = 2$ in \cite{BBKLMQSX}. There, an equation is derived which (essentially) defines the depth~2 locus $X(\bZ_p)_{\{2,q\},2}^{\min}$ inside $X(\bZ_p)$. It takes the form
	\begin{equation}
		\label{eq:depth2-function-intro}
		\log(2) \log(q) \Li_2(z) - a_{\tau_q \tau_2} \log(z) \Li_1(z) = 0
	\end{equation}
	for some computable $p$-adic constant $a_{\tau_q \tau_2} \in \bQ_p$. Here, $\log$, $\Li_1$ and $\Li_2$ are $p$-adic (poly)logarithm functions. One part of this paper is devoted to systematically computing the depth~2 loci $X(\bZ_p)_{\{2,q\},2}^{\min}$ for any~$p$ and~$q$. In §\ref{sec:depth2-loci} we describe an algorithm to achieve this. A \textsf{SageMath} \cite{sagemath} implementation is available at \url{https://github.com/martinluedtke/RefinedCK}.
	Using this code, we computed the depth~2 loci for many combinations of~$p$ and~$q$. We present our findings in §\ref{sec: sizes} and explain the observed behaviour by analysing the Newton polygons of power series. In particular, we are able to explain why the Chabauty--Kim loci are exceptionally large for the auxiliary primes $p = 1093$ and $p = 3511$. This is related to the fact that these are Wieferich primes, i.e., primes for which $2^{p-1} \equiv 1 \bmod p^2$. 
	
	In a few cases, notably whenever $q \geq 5$ is a Mersenne prime or a Fermat prime and we take $p = 3$, \Cref{kim-conjecture-refined} for $S = \{2,q\}$ holds already in depth~$2$ \cite[Cor.~3.15]{BBKLMQSX}. Most of the time, however, Eq.~\eqref{eq:depth2-function-intro} has $p$-adic solutions which are not $S$-integral points. In this case, one has to go to higher depth in order to verify Kim's Conjecture. 
	We take $n = 4$, show that in order to verify \Cref{kim-conjecture-refined} it suffices to look at a certain subset $X(\bZ_p)_{\{2,q\},4}^{(1,0)}$ of the refined Chabauty--Kim locus (defined in §\ref{sec:refined}) and derive an equation which holds on this set:
	\begin{thm}
		\label{thm:main-function-intro}
		Let $S = \{2,q\}$ for some odd prime~$q$ and let $p \not \in S$ be an auxiliary prime. Then every point in the refined Chabauty--Kim locus $X(\bZ_p)_{\{2,q\},4}^{(1,0)}$ satisfies, in addition to Eq.~\eqref{eq:depth2-function-intro}, a nontrivial equation of the form
		\begin{equation}
			\label{eq:depth4-equation-intro}
			a \Li_4(z) + b \log(z) \Li_3(z) + c \log(z)^3 \Li_1(z) = 0
		\end{equation}
		for certain $p$-adic constants $a,b,c \in \bQ_p$.
	\end{thm}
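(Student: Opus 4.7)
The plan is to extend the depth~2 analysis of~\cite{BBKLMQSX} to depth~4 by explicitly computing the ideal of Coleman functions defining the refined Chabauty--Kim locus. As in depth~2, this locus is the vanishing set on $X(\bZ_p)$ of the pullback, via the unipotent Kummer map, of the defining ideal of the image of the global refined Selmer scheme inside the local Selmer scheme at~$p$. The first step is therefore to describe this ideal explicitly at weight~4.

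To do so, one identifies the coordinates on the depth~4 local de Rham Selmer scheme with $p$-adic iterated Coleman integrals along paths in $X$, which under the unipotent Kummer map pull back to products of $p$-adic polylogarithms. In weight~4, the relevant functions on $X(\bZ_p)$ are linear combinations of $\Li_4(z)$, $\log(z)\Li_3(z)$, $\log(z)^2\Li_2(z)$, $\log(z)^3\Li_1(z)$, $\Li_2(z)^2$, $\log(z)\Li_1(z)\Li_2(z)$ and $\log(z)^2\Li_1(z)^2$. On the global side, freeness of the motivic Lie algebra of $\MT(\bZ[1/2q])$ with generators in weights $1,1,3,5,\ldots$ allows a dimension count of the Selmer scheme at depth~4; the refinement condition $(1,0)$ then imposes additional constraints on the localisation maps at~$2$ and~$q$, cutting down the target Selmer scheme enough that the image of the global refined Selmer scheme has codimension at least one at weight~4, yielding a nontrivial new relation.

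Next, one reduces this weight~4 relation modulo the depth~2 equation \eqref{eq:depth2-function-intro} and modulo the shuffle relations among polylogarithms. The depth~2 equation allows the elimination of $\Li_2(z)$ in terms of $\log(z)\Li_1(z)$, and the shuffle relations rewrite products such as $\Li_1(z)\Li_2(z)\log(z)$, $\log(z)^2\Li_2(z)$, $\Li_2(z)^2$ and $\Li_1(z)^2\log(z)^2$ in terms of the three basic weight-4 Coleman functions $\Li_4(z)$, $\log(z)\Li_3(z)$, $\log(z)^3\Li_1(z)$. After these substitutions only the claimed three-term combination remains, and one reads off the constants $a,b,c \in \bQ_p$ from the coefficients in the resulting linear algebra.

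The main obstacle will be the explicit determination of $a,b,c$ and, above all, verifying that they are not simultaneously zero. Tracking the refinement profile $(1,0)$ through the localisation maps at both~$2$ and~$q$ requires careful bookkeeping of the iterated Coleman integrals at the tangential basepoints, and the resulting weight~4 relation could a priori collapse to the depth~2 equation multiplied by some weight~2 function. Ruling this out reduces to a statement about the motivic coaction on $\pi_1^{\dR}(X_{\bQ_p},b)$, where freeness of the motivic Lie algebra of $\MT(\bZ[1/2q])$ should provide a clean independence criterion. An algorithmic implementation in the spirit of §\ref{sec:depth2-loci} then produces the explicit values of $a,b,c$ for any given~$p$ and~$q$, making the nontriviality checkable in practice.
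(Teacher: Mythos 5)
Your proposal captures the right general framework --- pull back functions on the local Selmer scheme that vanish on the image of the global refined Selmer scheme --- but it stops short of a proof and misses the one concrete mechanism that makes the paper's argument immediate. The paper works from the start with the \emph{polylogarithmic} quotient $U_{\PL,4}$, whose local Selmer scheme already has the five coordinates $\log,\Li_1,\ldots,\Li_4$. The localisation map in these coordinates is completely explicit (\Cref{thm: localisation map}, imported from Corwin--Dan-Cohen), and the $(1,0)$-refined Selmer scheme is cut out by the linear conditions $x_2 = 0$, $y_q = 0$ (\Cref{thm:refined-selmer-scheme-equations}). Substituting those into the formulas, the pullbacks become monomials in $x_q,y_2,z_3$; in particular, the three weight-4 monomials $\Li_4$, $\log\Li_3$, $\log^3\Li_1$ each pull back to a $\bQ_p$-linear combination of just the two monomials $x_q^3 y_2$ and $x_q z_3$. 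Nontriviality then falls out for free: a linear map from a $3$-dimensional space to a $2$-dimensional space has nontrivial kernel.

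Your sketch instead proposes a dimension count of Selmer schemes using freeness of the motivic Lie algebra, plus an elimination via the depth-2 equation and shuffle relations among products like $\Li_2^2$ and $\log\Li_1\Li_2$. None of that is needed once one restricts to the polylogarithmic quotient, and more importantly you explicitly flag the nontriviality of $(a,b,c)$ as an unresolved ``main obstacle'' that you expect to reduce to a statement about the motivic coaction --- precisely the step where the paper's argument is trivial (rank of a $3\times 2$ matrix). You also worry that the weight-4 relation ``could collapse to the depth-2 equation multiplied by a weight-2 function''; the paper's choice of the three monomials $\Li_4$, $\log\Li_3$, $\log^3\Li_1$ (omitting any term involving $\Li_2$ or $\log\Li_1$) makes such a collapse impossible by inspection. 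As written, your proposal is a plausible research program rather than a proof; the missing ingredient is the explicit coordinate description of $\loc_p$ restricted to the $(1,0)$-Selmer scheme, which turns the whole question into elementary linear algebra.
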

	
	\Cref{thm:main-function-intro} is a simplified version of \Cref{thm: 10 equations} where more precise expressions for the coefficients appearing in Eq.~\eqref{eq:depth4-equation-intro} are given; see also §\ref{sec:nontriviality} about the non-triviality of the equation. For general~$q$, an insufficient supply of $\{2,q\}$-integral points on the thrice-punctured line makes it difficult to determine those coefficients. Taking $q = 3$, however, where $X(\bZ[1/6])$ contains for example the points~$-3$, $3$, $9$, we get a completely explicit equation:
	
	\begin{thm}{(= \Cref{thm:det-equation})}
		\label{thm:det-equation-intro}
		Let $S = \{2,3\}$ and $p \not\in S$. Any point $z$ in the refined Chabauty--Kim locus $X(\bZ_p)_{\{2,3\},4}^{(1,0)}$ satisfies the equation
		\begin{equation}
			\label{eq:det-equation-intro}
			\det \begin{pmatrix}
				\Li_4(z) & \log(z) \Li_3(z) & \log(z)^3 \Li_1(z) \\
				\Li_4(3) & \log(3) \Li_3(3) & \log(3)^3 \Li_1(3) \\
				\Li_4(9) & \log(9) \Li_3(9) & \log(9)^3 \Li_1(9) 
			\end{pmatrix} = 0.
		\end{equation}
	\end{thm}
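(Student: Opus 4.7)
The plan is to apply \Cref{thm:main-function-intro} simultaneously to the variable point $z$ in the refined locus and to the two $\bZ[1/6]$-integral points $3$ and $9$, producing a $3 \times 3$ linear system in the common coefficient triple $(a,b,c)$ whose nontrivial solution forces the determinant in \eqref{eq:det-equation-intro} to vanish. The key point is that the triple $(a,b,c)$ supplied by \Cref{thm:main-function-intro} depends only on $p$ (and on the refined component $(1,0)$), not on the point at which the equation is evaluated.

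First I would verify that $3$ and $9$ both lie in $X(\bZ_p)_{\{2,3\},4}^{(1,0)}$. The refined Chabauty--Kim locus decomposes as a disjoint union over the valuation profiles at $S = \{2,3\}$, and each $S$-integral point sits in the component matching its own profile. A quick computation gives, for $z = 3$, $(v_2(z), v_3(z), v_2(1-z), v_3(1-z)) = (0,1,1,0)$ and, for $z = 9$, $(0,2,3,0)$. These two points share the same qualitative pattern — trivial valuation of $z$ at $2$ and of $1-z$ at $3$, positive valuation of $z$ at $3$ and of $1-z$ at $2$ — which, according to the conventions fixed in §\ref{sec:refined}, is precisely the label $(1,0)$. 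Thus $3, 9 \in X(\bZ_p)_{\{2,3\},4}^{(1,0)}$.

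With this in hand, \Cref{thm:main-function-intro} specialised to $q = 3$ produces $(a,b,c) \in \bQ_p^3$, nonzero by the nontriviality statement of \Cref{rem:eq-nontrivial}, such that \eqref{eq:depth4-equation-intro} holds throughout $X(\bZ_p)_{\{2,3\},4}^{(1,0)}$. Evaluating this single equation at the three points $z$, $3$, $9$ gives three simultaneous linear relations on $(a,b,c)$ whose coefficient matrix is exactly the one in \eqref{eq:det-equation-intro}. Since this matrix has $(a,b,c) \neq 0$ in its kernel, its determinant must vanish, which is the claimed identity.

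The main obstacle here is not analytic but bookkeeping: one must (i) match the label $(1,0)$ to the valuation profiles of $3$ and $9$ computed above, and (ii) invoke the precise nontriviality assertion for $(a,b,c)$ so that the kernel of the $3 \times 3$ matrix is genuinely nontrivial. Both are settled by results already established earlier in the paper, so no additional Coleman-theoretic input is required beyond \Cref{thm:main-function-intro} itself.
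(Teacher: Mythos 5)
Your proposal is correct and matches the paper's own argument essentially line for line: invoke the nontriviality of the depth-4 equation from \Cref{thm:main-function-intro} together with \Cref{rem:eq-nontrivial} to obtain a nonzero coefficient triple $(a,b,c)$, observe that $3$ and $9$ lie in $X(\bZ[1/6])_{(1,0)}$ (and hence in the refined locus), and conclude that the three rows of the matrix all annihilate $(a,b,c)$, forcing the determinant to vanish. The paper phrases the last step as ``any three vectors in a $2$-dimensional subspace are linearly dependent'' rather than ``the matrix has $(a,b,c)$ in its kernel,'' but these are the same observation, and your explicit check of the mod-$2$ and mod-$3$ reductions of $3$ and $9$ is a harmless elaboration of a step the paper leaves implicit.
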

	
	Using computations in Sage, we can verify for $S = \{2,3\}$ and many choices of~$p$ that all $p$-adic points satisfying both equations \eqref{eq:depth2-function-intro} and~\eqref{eq:det-equation-intro} are in fact $\{2,3\}$-integral points, thus proving \Cref{thm:main-kim-intro}.
	This provides evidence for Kim's Conjecture and supports the principle that, while a single Coleman function is usually insufficient to cut out precisely the set of $S$-integral points, two independent Coleman functions will often suffice.
	
	\subsection*{Structure of the paper}
	
	We start by recalling in §\ref{sec:background} the necessary background on refined Chabauty--Kim theory for the thrice-punctured line. We then derive in §\ref{sec:kim-functions} the Coleman functions which vanish on the depth~4 Chabauty--Kim loci in the case $S = \{2,q\}$, proving the precise version of \Cref{thm:main-function-intro} for general~$q$, as well as \Cref{thm:det-equation-intro} for $q = 3$. 
	We then turn to the computational aspects of this paper. In §\ref{sec:depth2-loci} we describe how to systematically compute the depth~2 loci $X(\bZ_p)_{\{2,q\},2}^{(1,0)}$ for arbitrary $p$ and $q$ and analyse the obtained data in §\ref{sec: sizes}. Finally, in §\ref{sec:verifying-kim} we present the computations which we use to verify instances of Kim's Conjecture in \Cref{thm:main-kim-intro}.

	\subsection*{Acknowledgements}
	I would like to thank Steffen Müller for his encouragement to write this paper and for helpful discussions. I also thank Elie Studnia and David Lilienfeldt for feedback on an earlier draft, and the referees for helpful comments. This work was supported by an NWO Vidi grant.

	\section{Background on refined Chabauty--Kim}
	\label{sec:background}
	
	We start by recalling what is known about the refined Chabauty--Kim method for the thrice-punctured line, referring to the existing literature for details. The original (unrefined) method is studied in \cites{kim:motivic}{DCW:explicitCK}{DCW:mixedtate1}{brown:IntegralPoints}[§8]{BDCKW}{CDC:polylog1}{CDC:polylog2}, the refined variant in \cite{BBKLMQSX, BKL:chabauty_kim_sc}. 
	
	Let $S$ be a finite set of primes and let $X = \bP^1 \smallsetminus \{0,1,\infty\}$ be the thrice-punctured line over the ring of $S$-integers $\bZ_S$. Let $p \not\in S$ be an auxiliary prime and let $U = \pi_1^{\et,\bQ_p}(X_{\overline{\bQ}},b)$ be the $\bQ_p$-prounipotent étale fundamental group of $X$ at the tangential base point $b = \vec{1}_0$, equipped with its natural action by the absolute Galois group~$G_{\bQ}$. For any finite-dimensional $G_{\bQ}$-equivariant quotient $U \twoheadrightarrow U'$, we have the following \emph{Chabauty--Kim diagram} \cite[§3]{kim:motivic} \cite[Introduction]{kim:albanese} 
	\begin{equation}
		\label{eq:CK-diagram}
		\begin{tikzcd}
			X(\bZ_S) \dar["j_S"] \rar[hook] & X(\bZ_p) \dar["j_p"] \\
			\rH^1_{f,S}(G_{\bQ}, U') \rar["\loc_p"] & \rH^1_f(G_p, U').
		\end{tikzcd}
	\end{equation}
	Here, $\rH^1_{f,S}(G_{\bQ}, U')$ denotes the Bloch--Kato Selmer scheme which parametrises $G_{\bQ}$-equivariant $U'$-torsors which are unramified outside $S \cup \{p\}$ and crystalline at~$p$; the local Selmer scheme $\rH^1_f(G_p, U')$ parametrises crystalline $G_p$-equivariant $U'$-torsors.\footnote{Kim \cite{kim:motivic, kim:albanese} considers the case that $U' = U_n$ is the maximal $n$-step nilpotent quotient but the construction of the Chabauty--Kim diagram works for arbitrary $G_{\bQ}$-equivariant quotients. Moreover, Kim writes $\rH^1_f(G_T, -)$ instead of $\rH^1_{f,S}(G_{\bQ}, -)$ where $T = S \cup \{p\}$ and $G_T$ is the Galois group of the maximal extension of~$\bQ$ unramified outside~$T$. This is an equivalent way of imposing the local conditions of being unramified outside $T$, cf.~\cite[§2.8]{BDCKW}.}
	Both the global and the local Selmer scheme are affine $\bQ_p$-schemes and the localisation map between them is algebraic. Strictly speaking, the vertical maps in the Chabauty--Kim diagram, both of which send a point $x$ of $X$ to the torsor of paths from $b$ to $x$, map into the $\bQ_p$-points of the schemes but this is customarily omitted from the notation. The \emph{Chabauty--Kim locus} for the fundamental group quotient $U'$ is defined as
	\[ X(\bZ_p)_{S,U'} \coloneqq j_p^{-1}(\loc_p(\rH^1_{f,S}(G_{\bQ}, U'))), \]
	in other words as the inverse image under~$j_p$ of the scheme-theoretic image of the Selmer scheme under the localisation map. If $U' = U_n$ is the maximal $n$-step nilpotent quotient of $U$, we denote the locus by $X(\bZ_p)_{S,n}$ and call it the Chabauty--Kim locus of \emph{depth}~$n$. The sets $X(\bZ_p)_{S,U'}$ all contain the set of $S$-integral points $X(\bZ_S)$ by construction and are cut out inside $X(\bZ_p)$ by Coleman functions, i.e., locally analytic functions given by iterated Coleman integrals. Whenever $f$ is an algebraic function on $\rH^1_f(G_p, U')$ such that $\loc_p^{\sharp}f = 0$, then $f \circ j_p$ is a Coleman function on $X(\bZ_p)$ which vanishes on $X(\bZ_p)_{S,U'}$. Determining the Chabauty--Kim loci in practice boils down to finding such functions.
	
	
	\subsection{The localisation map}
	\label{sec:localisation map}
	
	One fundamental group quotient which is particularly convenient to work with is the so-called \emph{polylogarithmic quotient} of depth~$n$, which we denote by $U_{\PL,n}$. We denote the associated Chabauty--Kim loci by $X(\bZ_p)_{S,\PL,n}$. The Coleman functions defining them involve only single polylogarithms $\Li_k$ ($1 \leq k \leq n$), as opposed to multiple polylogarithms $\Li_{k_1,\ldots,k_r}$ with $r \geq 2$.
	Thanks to prior work by Corwin and Dan-Cohen \cite{CDC:polylog1} we can write down the localisation map in the Chabauty--Kim diagram for $U_{\PL,n}$ quite explicitly. 
	The global and local Selmer scheme are both affine spaces over~$\bQ_p$. Taking $n = 4$, the global Selmer scheme is given by
	\[ \rH^1_{f,S}(G_{\bQ}, U_{\PL,4}) = \Spec \, \bQ_p[(x_{\ell})_{\ell \in S}, (y_{\ell})_{\ell \in S}, z_3] = \bA^S \times \bA^S \times \bA^1. \]
 	The functions $x_{\ell}$ and $y_{\ell}$ are canonical, whereas $z_3$ depends on certain choices. In \cite{CDC:polylog1}, these functions are denoted by $x_{\ell} = \Phi^{\tau_{\ell}}_{e_0}$, $y_{\ell} = \Phi^{\tau_{\ell}}_{e_1}$, $z = \Phi^{\sigma_3}_{e_1e_0e_0}$.
 	
 	The local Selmer scheme has a canonical set of coordinates
 	\[ \rH^1_f(G_p, U_{\PL,4}) = \Spec\, \bQ_p[\log,\Li_1,\Li_2,\Li_3,\Li_4] \]
 	coming from the non-abelian Bloch--Kato logarithm $\rH^1_f(G_p, U_{\PL,4}) \cong U_{\PL,4}^{\dR}$.	They are such that $\log(j_p(z)) = \log^p(z)$ is the $p$-adic logarithm for $z \in X(\bZ_p)$, and $\Li_n(j_p(z)) = \Li_n^p(z)$ is the $n$-th $p$-adic polylogarithm. (We often omit the superscript~$(-)^p$ from the notation.) These coordinates can be used to write down the localisation map
 	\begin{equation} 
 		\label{eq:localisation-map}
 		\loc_p\colon \rH^1_{f,S}(G_{\bQ}, U_{\PL,4}) \to \rH^1_f(G_p, U_{\PL,4}).
 	\end{equation}

	\begin{prop}
		\label{thm: localisation map}
		With respect to the coordinates above, the localisation map~\eqref{eq:localisation-map} for the polylogarithmic quotient in depth 4 is given as follows:
		\begin{align}
			\label{eq:loc-log}
			\loc_p^\sharp \log &= \sum_{\ell \in S} a_{\tau_{\ell}} x_{\ell},\\
			\label{eq:loc-Li1}
			\loc_p^\sharp \Li_1 &= \sum_{\ell \in S} a_{\tau_{\ell}} y_{\ell},\\
			\label{eq:loc-Li2}
			\loc_p^\sharp \Li_2 &= \sum_{\ell,q \in S} a_{\tau_{\ell} \tau_q} x_{\ell} y_q,\\
			\label{eq:loc-Li3}
			\loc_p^\sharp \Li_3 &= \sum_{\ell_1,\ell_2,q \in S} a_{\tau_{\ell_1} \tau_{\ell_2}\tau_q} x_{\ell_1} x_{\ell_2} y_q + a_{\sigma_3} z_3,\\
			\label{eq:loc-Li4}
			\loc_p^\sharp \Li_{4} &= \sum_{\ell_1,\ell_2,\ell_3,q \in S} a_{\tau_{\ell_1} \tau_{\ell_2} \tau_{\ell_3}\tau_{q}} x_{\ell_1} x_{\ell_2} x_{\ell_3} y_q + \sum_{\ell \in S} a_{\tau_{\ell} \sigma_3} x_{\ell} z_3.
		\end{align}
		Here, the $a_{u}$, subscripted by words in the symbols $\tau_{\ell}$ ($\ell \in S$) and $\sigma_3$, are certain $p$-adic constants.
	\end{prop}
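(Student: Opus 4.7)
The plan is to invoke and specialise the general framework of Corwin--Dan-Cohen \cite{CDC:polylog1} for the polylogarithmic quotient. The starting point is the description of the Lie algebra $\Lie U_{\PL,n}$ as freely generated by $e_0, e_1$ modulo the ideal of Lie words of $e_1$-depth $\geq 2$; its associated graded for the weight filtration is one-dimensional in each weight $-k$ with $1 \leq k \leq n$, spanned by the right-nested bracket $[e_0,[e_0,\ldots,[e_0,e_1]\ldots]]$, and is isomorphic as a Galois representation to $\bQ_p(k)$. This structural description determines both the global and local Selmer schemes inductively by the weight filtration.

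For the global side, I would input the mixed-Tate computation of $\rH^1_{f,S}(G_{\bQ}, \bQ_p(k))$: this vanishes for even $k \geq 2$, is spanned by the Kummer characters $\tau_\ell$ for $\ell \in S$ when $k = 1$, and is one-dimensional generated by a single class $\sigma_k$ for odd $k \geq 3$. Combined with the vanishing of $\rH^2_{f,S}(G_{\bQ}, \bQ_p(k))$ in the relevant range, the non-abelian cohomology inherits the affine structure $\bA^S \times \bA^S \times \bA^1$ in depth $4$: the coordinates $x_\ell$ and $y_\ell$ record the $\tau_\ell$-components in the two weight-$(-1)$ graded pieces coming from the generators $e_0$ and $e_1$, while $z_3$ records the $\sigma_3$-component in weight $-3$. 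No coordinate appears in weights $-2$ or $-4$ because $\rH^1_{f,S}(G_{\bQ}, \bQ_p(2)) = \rH^1_{f,S}(G_{\bQ}, \bQ_p(4)) = 0$.

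For the local side, the Bloch--Kato/Olsson comparison identifies $\rH^1_f(G_p, U_{\PL,4})$ with the corresponding de Rham period space, and the coordinates $\log, \Li_1, \ldots, \Li_4$ are the dual basis to the iterated-bracket generators in $\Lie U_{\PL,4}^{\dR}$; by construction of the Bloch--Kato exponential, they pull back under $j_p$ to the $p$-adic (poly)logarithms on $X(\bZ_p)$. The precise polynomial shape of $\loc_p^\sharp$ is then dictated by the Ihara-type coproduct on the Hopf algebra of $U_{\PL,4}$, or equivalently by the Baker--Campbell--Hausdorff rule for composing cocycles from their graded pieces: the abelian weight-$(-1)$ formulas \eqref{eq:loc-log}, \eqref{eq:loc-Li1} are linear in $x_\ell$ and $y_\ell$; in higher weight each new coordinate $\Li_k$ acquires polynomial corrections indexed by words of total weight $k$ in the symbols $\tau_\ell$ and $\sigma_3$, with the constants $a_u$ obtained by pairing the dual graded element against the image of the word $u$ under the comparison isomorphism.

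The main obstacle is bookkeeping rather than novelty: one must verify that the weight-grading indexing, the choice of representative cocycle for $\sigma_3$, and the normalisations of $\log$ and $\Li_k$ are all mutually consistent so that the vanishing of $\rH^1_{f,S}(G_{\bQ}, \bQ_p(2))$ and $\rH^1_{f,S}(G_{\bQ}, \bQ_p(4))$ really does eliminate every would-be extra term in Eqs.~\eqref{eq:loc-Li2} and \eqref{eq:loc-Li4}, leaving precisely the products of $x_\ell$, $y_\ell$, and $z_3$ of the asserted shape. Since the depth-2 case is already worked out in \cite{BBKLMQSX}, the plan is to extract the depth-3 and depth-4 contributions by the same procedure, so the proof reduces to quoting the Corwin--Dan-Cohen framework and noting that in depth~$4$ no new global coordinates appear in even weight.
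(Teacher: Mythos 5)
The paper does not prove this proposition from scratch: it is quoted directly from \cite[Theorem~5.6]{BKL:chabauty_kim_sc}, which in turn rests on \cite[Corollary~3.11]{CDC:polylog1} for the motivic version and the comparison theorem \cite[Theorem~3.2]{BKL:chabauty_kim_sc} to pass to the \'etale side. Your proposal instead attempts to reconstruct the underlying argument of those references. The structural outline you give is the right one, and in particular you correctly identify the two inputs (Soul\'e's computation of $\rH^1_{f,S}(G_\bQ,\bQ_p(k))$ and the identification of the local side with a de Rham period space carrying $\log, \Li_1, \ldots, \Li_4$ as coordinates), so as a roadmap this is consonant with what the cited papers actually do.

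Two points need fixing, however. First, your description of the weight-graded Lie algebra is internally inconsistent: you first assert that $\gr^W_{-k}\Lie U_{\PL,n}$ is one-dimensional for every $1 \leq k \leq n$, spanned by the right-nested bracket, and then correctly say a few lines later that there are \emph{two} weight-$(-1)$ pieces coming from $e_0$ and $e_1$. The latter is what is true, and it is exactly why the global Selmer scheme carries the two families of coordinates $x_\ell$ and $y_\ell$; if weight $-1$ were genuinely one-dimensional you could not produce the bilinear formula \eqref{eq:loc-Li2}. Second, and more substantively, the crux of the proposition is not the affine structure of the Selmer schemes but the precise polynomial shape of $\loc_p^\sharp$, i.e.\ exactly which monomials in $x_\ell, y_\ell, z_3$ appear in each $\loc_p^\sharp\Li_k$ and the existence of well-defined constants $a_u$ indexed by words of the correct weight. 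You gesture at this via ``the Ihara-type coproduct'' and ``BCH'' but do not carry it out; this is where all of the content of \cite[Corollary~3.11]{CDC:polylog1} lives (in the mixed Tate setting it is the explicit cocycle computation in the universal enveloping algebra of $\Lie U_{\bQ,S}^{\MT}$, not merely a formal consequence of weight considerations). As written, your argument establishes that $\loc_p^\sharp\Li_k$ is a polynomial of weight $k$ in the coordinates, but not that it is precisely the displayed sum with no cross terms omitted or added; filling that in amounts to redoing the Corwin--Dan-Cohen computation, so one should either perform it or cite it as the paper does.
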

	
	\begin{proof}
		This is \cite[Theorem~5.6]{BKL:chabauty_kim_sc} for $n=4$. The formulas are originally derived in {\cite[Corollary~3.11]{CDC:polylog1}} for a motivic variant of the Chabauty--Kim diagram, and transferred to the étale setting via a comparison theorem~\cite[Theorem~3.2]{BKL:chabauty_kim_sc}.
	\end{proof}
	
	\begin{rem}
		\label{rem:padic-coeffs}
		The $p$-adic constants $a_u$ appearing in the localisation map are hard to determine in practice. In §\ref{sec:explicit-equations} we discuss this issue and determine the values of the constants in the case $S = \{2,3\}$.
	\end{rem}
	
	\subsection{Refined Selmer schemes}
	\label{sec:refined}
	
	The refined Chabauty--Kim method by Betts and Dogra \cite{BD:refined} replaces the Selmer scheme $\rH^1_{f,S}(G_{\bQ}, U')$ by a certain closed subscheme $\Sel_{S,U'}^{\min}(X)$ called the \emph{refined Selmer scheme}. It is defined in such a way that it still fits into the commutative diagram~\eqref{eq:CK-diagram}. The refined Selmer scheme for the thrice-punctured line was first studied in \cite{BBKLMQSX} in depth~2 and later in \cite[§4]{BKL:chabauty_kim_sc} for general fundamental group quotients. If $2 \not\in S$, then both $X(\bZ_S)$ and the refined Selmer scheme are automatically empty as a consequence of $X(\bZ_2)$ being empty. \Cref{kim-conjecture-refined} is trivially satisfied in this case. So assume $2 \in S$ from now on. Then the refined Selmer scheme can be written as a union of $3^{\#S}$ closed subschemes as follows.
	
	For each $\ell \in S$ we have the mod-$\ell$ reduction map
	\[ \mathrm{red}_{\ell}\colon X(\bZ_S) \subseteq X(\bQ_{\ell}) \subseteq \bP^1(\bQ_{\ell}) = \bP^1(\bZ_{\ell}) \to \bP^1(\bF_{\ell}). \]
	Let $\Sigma = (\Sigma_{\ell})_{\ell \in S} \in \{0,1,\infty\}^S$ be a tuple consisting of a choice of a boundary point $\Sigma_{\ell} \in \{0,1,\infty\}$ for each $\ell \in S$. We call such a tuple a \emph{refinement condition}. (It corresponds roughly to the notion of \emph{reduction type} of \cite[§6.1]{betts:effective}.) Denote by $X(\bZ_S)_{\Sigma}$ the set of $S$-integral points~$z$ such that $\mathrm{red}_{\ell}(z) \in (X \cup \{\Sigma_{\ell}\})(\bF_{\ell})$ for all $\ell \in S$. Note that each $S$-integral point is contained in $X(\bZ_S)_{\Sigma}$ for some~$\Sigma$. (If the point is already $S'$-integral for a proper subset $S' \subsetneq S$, there are multiple possible choices of~$\Sigma$.) Associated to $\Sigma$ we have a partial refined Selmer scheme $\Sel^{\Sigma}_{S,U'}(X)$ fitting into a $\Sigma$-refined version of~\eqref{eq:CK-diagram}:
	\begin{equation}
		\label{eq:CK-diagram-refined}
		\begin{tikzcd}
			X(\bZ_S)_{\Sigma} \dar["j_S"] \rar[hook] & X(\bZ_p) \dar["j_p"] \\
			\Sel^{\Sigma}_{S,U'}(X) \rar["\loc_p"] & \rH^1_f(G_p, U').
		\end{tikzcd}
	\end{equation}
	This diagram is used to define the $\Sigma$-refined Chabauty--Kim locus
	\[ X(\bZ_p)_{S,U'}^{\Sigma} \coloneqq j_p^{-1}(\loc_p(\Sel^{\Sigma}_{S,U'}(X))). \]
	The total refined Selmer scheme $\Sel^{\min}_{S,U'}(X)$ is the union of closed subschemes
	\[ \Sel^{\min}_{S,U'}(X) = \bigcup_{\Sigma} \Sel^{\Sigma}_{S,U'}(X), \]
	with $\Sigma \in \{0,1,\infty\}^S$ running over all refinement conditions. Accordingly, the total refined Chabauty--Kim locus $X(\bZ_p)_{S,U'}^{\min}$ equals the union of the $\Sigma$-refined loci:
	\[  X(\bZ_p)_{S,U'}^{\min} = \bigcup_{\Sigma} X(\bZ_p)_{S,U'}^{\Sigma}. \]
	Diagram~\eqref{eq:CK-diagram-refined} implies that $X(\bZ_p)_{S,U'}^{\Sigma}$ contains the set $X(\bZ_S)_{\Sigma}$. It is natural to formulate Conjecture~\ref{kim-conjecture-refined} for each refinement condition $\Sigma$ separately:
	\begin{conjecture}[$\Sigma$-refined Kim's Conjecture]
		\label{kim-conjecture-sigma}
		$X(\bZ_p)_{S,n}^{\Sigma} = X(\bZ_S)_{\Sigma}$ for $n \gg 0$.
	\end{conjecture}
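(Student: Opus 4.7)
The conjecture is open in general, so my plan is not to give a complete proof but to describe a verification strategy that should work in favourable cases. First, I would use the explicit localisation map of \Cref{thm: localisation map} (and its higher-depth analogues) together with the defining equations of the partial refined Selmer scheme $\Sel^{\Sigma}_{S,U_n}(X)$. Imposing the refinement condition $\Sigma$ forces certain of the Selmer coordinates $(x_{\ell}, y_{\ell})_{\ell \in S}$ to vanish (depending on whether $\Sigma_\ell \in \{0,1,\infty\}$), which drops the dimension of the image of $\loc_p$ strictly below that of $\rH^1_f(G_p, U_n)$. The image is therefore cut out by nontrivial polynomial relations in $\log, \Li_1, \ldots, \Li_n$ with $p$-adic coefficients built from the constants $a_u$, and pulling back along $j_p$ yields Coleman functions on $X(\bZ_p)$ vanishing on $X(\bZ_p)^{\Sigma}_{S,n}$.

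Next, I would take $n$ large enough that these Coleman functions cut out a finite set, and then show that this finite set equals $X(\bZ_S)_{\Sigma}$. A dimension count suggests $n = 4$ should suffice when $|S| = 2$: there the refined Selmer scheme has strictly smaller dimension than the local one, so elimination of the Selmer coordinates produces at least one nontrivial equation beyond the single depth-$2$ relation~\eqref{eq:depth2-function-intro}. The coefficients $a_u$ are transcendental $p$-adic numbers not directly computable \emph{a priori}; however, one can pin them down (or at least enough independent combinations of them) by imposing that the resulting equations vanish at the known $S$-integral points. When enough such points are available, as for $S=\{2,3\}$ with the points $-3, 3, 9 \in X(\bZ[1/6])$, this collapses to a fully explicit determinantal identity of the form appearing in \Cref{thm:det-equation-intro}.

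Finally, for each target prime $p$ I would solve the resulting system inside $X(\bZ_p)$ residue disc by residue disc, using $p$-adic Newton-polygon slope analysis and root-finding to a suitable precision, and then verify that every solution lies in $X(\bZ_S)_{\Sigma}$. The main obstacle is twofold. Theoretically, computing the $a_u$ from first principles (independently of the known $S$-integral points) appears out of reach, so the argument is limited to cases containing enough $S$-integral points to determine the equations by interpolation. Computationally, controlling the $p$-adic precision of iterated polylogarithms on boundary residue discs is delicate, and can degenerate for arithmetically exceptional primes such as Wieferich primes, where shallow Newton-polygon slopes produce spurious $p$-adic zeros that can be excluded only by passing to higher depth.
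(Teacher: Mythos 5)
The statement you were assigned is a conjecture; the paper does not prove it, but only verifies it computationally for $S=\{2,3\}$ and $5 \le p < 10{,}000$ in depth~$4$, and your proposal correctly recognizes this and sketches essentially the paper's own verification strategy (eliminate the refined Selmer coordinates using the explicit localisation map and the linear equations cutting out $\Sel^{\Sigma}_{S,U_n}(X)$, interpolate at known $S$-integral points to obtain the determinantal relation of \Cref{thm:det-equation-intro}, then solve residue disc by residue disc with Newton-polygon/Strassmann control). One small inaccuracy: you say the $a_u$ are ``not directly computable \emph{a priori}'', but the depth-$2$ DCW coefficient $a_{\tau_q\tau_2}$ \emph{is} computable from first principles for any $q$ via Steinberg decompositions in $K_2(\bQ)\otimes\bQ$ (§\ref{sec:DCW-coeffs}); it is only the depth-$4$ coefficients involving $\sigma_3$, such as $a_{\tau_q\sigma_3}$, that the paper currently obtains only for $q=3$ and only by exploiting known integral points.
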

	Here, $X(\bZ_p)_{S,n}^{\Sigma}$ denotes the $\Sigma$-refined Chabauty--Kim locus for the depth~$n$ quotient~$U_n$. Clearly, if Conjecture~\ref{kim-conjecture-sigma} holds for each refinement condition $\Sigma$, then \Cref{kim-conjecture-refined} for the total refined locus $X(\bZ_p)_{S,n}^{\min}$ also holds. 
	
	\begin{rem}
		The converse is not true: for example, when $q > 3$ is a Fermat or Mersenne prime then \Cref{kim-conjecture-refined} holds for $X(\bZ_3)_{\{2,q\},2}^{\min}$ \cite[Cor.~3.15]{BBKLMQSX} whereas the $(1,0)$-refined variant fails, due to $2 \in X(\bZ_3)_{\{2,q\},2}^{(1,0)} \smallsetminus X(\bZ[\frac1{2q}])_{(1,0)}$ \cite[Rmk.~3.11]{BBKLMQSX}.
	\end{rem}
	
	Recall from §\ref{sec:localisation map} that the Selmer scheme $\rH^1_{f,S}(G_{\bQ}, U_{\PL,4}) \cong \bA^S \times \bA^S \times \bA^1$ for the polylogarithmic quotient of depth~$4$ carries canonical functions $x_{\ell}$ and $y_{\ell}$ for $\ell \in S$. The $\Sigma$-refined Selmer scheme can be described as a linear subspace in terms of these coordinates. The following is a special case of \cite[Prop.~5.11]{BKL:chabauty_kim_sc}.
	
	\begin{prop}
		\label{thm:refined-selmer-scheme-equations}
		Let $\Sigma = (\Sigma_{\ell})_{\ell \in S} \in \{0,1,\infty\}^S$ be a refinement condition. The $\Sigma$-refined Selmer scheme $\Sel^{\Sigma}_{S,\PL,4}(X)$ is the closed subscheme of $\rH^1_{f,S}(G_{\bQ}, U_{\PL,4})$ defined by the following equations for all~$\ell \in S$:
		\[ \begin{cases}
			y_{\ell} = 0, & \text{ if } \Sigma_{\ell} = 0,\\
			x_{\ell} = 0, & \text{ if } \Sigma_{\ell} = 1,\\
			x_{\ell} + y_{\ell} = 0, & \text{ if } \Sigma_{\ell} = \infty.
		\end{cases} \]
	\end{prop}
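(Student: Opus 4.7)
The plan is to deduce the statement from the general description of refined Selmer schemes given by \cite[Prop.~5.11]{BKL:chabauty_kim_sc}. That result characterises $\Sel^{\Sigma}_{S,U'}(X)$, for an arbitrary Galois-equivariant quotient $U'$ of $U$, as the closed subscheme of $\rH^1_{f,S}(G_{\bQ},U')$ cut out by requiring, for each $\ell \in S$, that the localisation at $\ell$ of the given cohomology class agrees with the local class coming from the tangential base point $\vec{1}_{\Sigma_\ell}$ at $\Sigma_\ell \in \{0,1,\infty\}$. I would first specialise that statement to $U' = U_{\PL,4}$ and observe that, since each refinement condition is already visible on the abelianisation $U_{\PL,4}^{\ab} \cong \bQ_p(1)^{\oplus 2}$, whose natural basis is given by loops $\sigma_0,\sigma_1$ around $0$ and $1$, it suffices to unpack the local conditions at the abelian level.

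Next, I would match the coordinates $x_\ell$ and $y_\ell$ with the Kummer classes appearing in the local Selmer group. By construction of the canonical coordinates on $\rH^1_{f,S}(G_{\bQ},U_{\PL,4})$ used in \cite{CDC:polylog1} and transferred in \cite{BKL:chabauty_kim_sc}, $x_\ell$ (resp.\ $y_\ell$) is dual to $\sigma_0$ (resp.\ $\sigma_1$), so that under the isomorphism $\rH^1(G_\ell,\bQ_p(1)) \cong \bQ_p$ (for $\ell \ne p$) these coordinates record the Kummer image of $z$ and of $1-z$ at the prime $\ell$. From here each case becomes a direct computation. For $\Sigma_\ell = 0$ the point reduces to $0$, so $v_\ell(z) > 0$ while $v_\ell(1-z) = 0$; only the $\sigma_0$-component of the Kummer class is nontrivial, forcing $y_\ell = 0$. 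The case $\Sigma_\ell = 1$ is symmetric and gives $x_\ell = 0$. For $\Sigma_\ell = \infty$, one uses the fundamental relation $\sigma_0 \sigma_1 \sigma_\infty = 1$ in $\pi_1^{\et}(X_{\overline\bQ},b)$, which on the abelianisation becomes $\sigma_\infty = -(\sigma_0 + \sigma_1)$; dualising translates the tangential class at $\infty$ into the equation $x_\ell + y_\ell = 0$.

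The main obstacle is purely bookkeeping: one has to check that the chosen normalisations of the coordinates $x_\ell, y_\ell$ (as in \cite{CDC:polylog1}) match the sign conventions for the loops around the three punctures in such a way that the third case comes out as $x_\ell + y_\ell = 0$ rather than, say, $x_\ell = y_\ell$. Once this identification is pinned down, no further work is needed beyond invoking \cite[Prop.~5.11]{BKL:chabauty_kim_sc}, and the three cases in the statement read off directly from the three possible tangential base points at the missing boundary point modulo $\ell$.
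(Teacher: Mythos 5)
Your approach is the same as the paper's, whose proof is simply the one-line citation to \cite[Prop.~5.11]{BKL:chabauty_kim_sc}. The extra detail you supply --- reading $x_\ell$, $y_\ell$ as Kummer classes of $z$ and $1-z$, and using the inertia relation $\sigma_0\sigma_1\sigma_\infty = 1$ to produce $x_\ell + y_\ell = 0$ in the $\Sigma_\ell = \infty$ case --- is really the internal content of the cited proposition rather than additional work needed here, but it is correct, including the sign bookkeeping you flag.
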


	\section{Refined Kim functions in depth~4}
	\label{sec:kim-functions}
	
	We now consider the case where $S = \{2,q\}$ for some odd prime~$q$. In this section we first reduce Conjecture~\ref{kim-conjecture-refined} in depth~$4$ to \Cref{kim-conjecture-sigma} for the polylogarithmic depth~4 quotient and for only two particular choices of the refinement condition~$\Sigma$. We then determine Coleman functions vanishing on the respective Chabauty--Kim sets, proving Theorems~\ref{thm:main-function-intro} and~\ref{thm:det-equation-intro} from the introduction.
	
	\subsection{Reducing Kim's conjecture}
	
	\begin{lemma}\leavevmode
		\label{thm:kim-conjecture-reduction}
			Assume that $X(\bZ_p)_{S,\PL,4}^{\Sigma}  = X(\bZ_S)_\Sigma$ holds for the two refinement conditions $\Sigma = (1,1)$ and $\Sigma = (1,0)$. Then Conjecture~\ref{kim-conjecture-sigma} holds in depth~4 for \emph{all} $\Sigma \in \{0,1,\infty\}^2$. In particular, Kim's Conjecture for the total refined locus (Conjecture~\ref{kim-conjecture-refined}) holds in depth~4.
	\end{lemma}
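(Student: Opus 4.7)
The plan is to reduce the statement for all nine refinement conditions to the two hypothesised ones via (a) passage from the polylogarithmic to the full depth-$4$ quotient and (b) the $S_3$-symmetry of $X$ permuting its three punctures. First I would observe that $U_{\PL,4}$ is a Galois-equivariant quotient of $U_4$; by naturality of Selmer schemes, of the Kummer map, and of the localisation map, this yields at every refinement condition~$\Sigma$ the chain of inclusions
\[
X(\bZ_S)_\Sigma \subseteq X(\bZ_p)_{S,4}^{\Sigma} \subseteq X(\bZ_p)_{S,\PL,4}^{\Sigma}.
\]
Applied to $\Sigma \in \{(1,1),(1,0)\}$, the hypothesis forces both inclusions to be equalities, so $X(\bZ_p)_{S,4}^{\Sigma} = X(\bZ_S)_\Sigma$ in these two cases.

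To propagate the equality to the remaining seven refinement conditions, I would invoke the natural action of $S_3 = \Aut(X/\bZ)$ permuting $\{0,1,\infty\}$, realised by the three involutions $z \mapsto 1-z$, $z \mapsto 1/z$, $z \mapsto z/(z-1)$ and their compositions. Each $\sigma \in S_3$ acts compatibly on $X(\bZ_S)$, on $X(\bZ_p)$, on the depth-$4$ pro-unipotent fundamental group (with an appropriate change of tangential base point), and hence on every ingredient of the Chabauty--Kim diagram. Because $\sigma$ commutes with mod-$\ell$ reduction and permutes the boundary points $\{0,1,\infty\}$, the diagonal action $\Sigma \mapsto \sigma\Sigma \coloneqq (\sigma(\Sigma_\ell))_{\ell \in S}$ on $\{0,1,\infty\}^S$ sends $X(\bZ_S)_\Sigma$ bijectively to $X(\bZ_S)_{\sigma\Sigma}$ and $X(\bZ_p)_{S,4}^\Sigma$ to $X(\bZ_p)_{S,4}^{\sigma\Sigma}$.

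A short enumeration shows that this diagonal $S_3$-action on $\{0,1,\infty\}^2$ has exactly two orbits: the \emph{diagonal} orbit $\{(0,0),(1,1),(\infty,\infty)\}$ of size~$3$, represented by $(1,1)$, and the \emph{off-diagonal} orbit of size~$6$ consisting of all pairs with distinct entries, represented by $(1,0)$. Every refinement condition is therefore $S_3$-equivalent to one of the two hypothesised ones, and the equality $X(\bZ_p)_{S,4}^\Sigma = X(\bZ_S)_\Sigma$ transports along the corresponding automorphism. Taking the union over all nine refinement conditions then yields \Cref{kim-conjecture-refined} in depth~$4$.

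The step I expect to require the most care is the naturality part of (b): one must confirm that the induced $S_3$-action on the refined Chabauty--Kim data really does intertwine the $\Sigma$-refined and $\sigma\Sigma$-refined structures, including compatibility with the change of tangential base point and the transformation of the refinement conditions of \Cref{thm:refined-selmer-scheme-equations} under the induced $S_3$-action on the coordinates $(x_\ell, y_\ell)$. This is morally a routine naturality statement built into the Chabauty--Kim formalism, but it is the one place where a little honest bookkeeping is needed to legitimise the symmetry argument.
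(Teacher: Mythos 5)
Your proposal is correct and follows essentially the same argument as the paper: passage from the polylogarithmic to the full depth-$4$ quotient via the inclusion chain $X(\bZ_S)_\Sigma \subseteq X(\bZ_p)_{S,4}^\Sigma \subseteq X(\bZ_p)_{S,\PL,4}^\Sigma$, followed by $S_3$-functoriality of the refined loci (the paper's citations of Lemmas~4.11 and~4.12 of \cite{BKL:chabauty_kim_sc}) together with the observation that $(1,1)$ and $(1,0)$ represent the only two $S_3$-orbits on $\{0,1,\infty\}^2$. The paper delegates the "honest bookkeeping" you flag to the cited lemma rather than re-deriving it, but there is no substantive difference in approach.
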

	
	\begin{proof}
		By \cite[Lemma~4.11]{BKL:chabauty_kim_sc}, the quotient map $U_4 \twoheadrightarrow U_{\PL,4}$ from the full depth~4 quotient to the polylogarithmic depth~4 quotient of the fundamental group induces an inclusion 
		\[ X(\bZ_p)_{S,4}^{\Sigma} \subseteq X(\bZ_p)_{S,\PL,4}^{\Sigma}. \]
		Thus, whenever $X(\bZ_p)_{S,\PL,4}^{\Sigma} = X(\bZ_S)_{\Sigma}$ holds, then also $X(\bZ_p)_{S,4}^{\Sigma} = X(\bZ_S)_{\Sigma}$. By \cite[Lemma~4.12]{BKL:chabauty_kim_sc}, the loci $X(\bZ_p)_{S,4}^{\Sigma}$ are functorial with respect to the $S_3$-action on $\bP^1 \smallsetminus \{0,1,\infty\}$. Specifically, for any automorphism $\sigma \in S_{\{0,1,\infty\}} \cong S_3$, given by one of the six Möbius transformations
		\[ z, \quad 1-z, \quad \frac1{z}, \quad \frac{z-1}{z}, \quad \frac{z}{z-1},\quad  \frac1{1-z}, \]
		and for any refinement condition $\Sigma \in \{0,1,\infty\}^2$, we have
		\[ \sigma(X(\bZ_p)_{S,4}^{\Sigma}) = X(\bZ_p)_{S,4}^{\sigma(\Sigma)}. \]
		In particular, if $X(\bZ_p)_{S,4}^{\Sigma} = X(\bZ_S)_{\Sigma}$ holds for some $\Sigma$, then it also holds for $\sigma(\Sigma)$. Any refinement condition is either of the form $\sigma((1,1))$ or $\sigma((1,0))$ for some $\sigma \in S_3$, so if Kim's Conjecture holds for $X(\bZ_p)_{S,4}^{(1,1)}$ and $X(\bZ_p)_{S,4}^{(1,0)}$, then it holds in fact for all refinement conditions, and thus for the total refined locus $X(\bZ_p)_{S,4}^{\min}$.
	\end{proof}

	\subsection{The $(1,1)$-locus}
	
	
	\begin{thm}
		\label{thm: 11 equations}
		The following equations hold on $X(\bZ_p)_{\{2,q\},\PL,4}^{(1,1)}$:
		\[ \log(z) = 0, \quad \Li_2(z) = 0, \quad \Li_4(z) = 0. \]
	\end{thm}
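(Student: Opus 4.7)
The plan is to reduce the assertion to a direct computation with the explicit localisation map from \Cref{thm: localisation map}, using the description of the refined Selmer scheme given by \Cref{thm:refined-selmer-scheme-equations}. Concretely, for $S = \{2,q\}$ and $\Sigma = (1,1)$, \Cref{thm:refined-selmer-scheme-equations} tells us that the refined Selmer scheme $\Sel^{(1,1)}_{S,\PL,4}(X)$ is cut out inside $\bA^S \times \bA^S \times \bA^1$ by the two equations $x_2 = 0$ and $x_q = 0$. In other words, $x_\ell$ vanishes identically on $\Sel^{(1,1)}_{S,\PL,4}(X)$ for every $\ell \in S$.

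Next I would simply substitute $x_\ell = 0$ into each of the five formulas \eqref{eq:loc-log}--\eqref{eq:loc-Li4} for the pullbacks $\loc_p^\sharp$. Three of them have the feature that every summand contains a factor of some $x_\ell$: the expression for $\loc_p^\sharp \log$ is $\sum_\ell a_{\tau_\ell} x_\ell$; the expression for $\loc_p^\sharp \Li_2$ is bilinear in the $x_\ell$ and $y_\ell$; and the expression for $\loc_p^\sharp \Li_4$ consists of a term trilinear in the $x_\ell$'s times a $y_q$, plus a term $\sum_\ell a_{\tau_\ell \sigma_3} x_\ell z_3$. Each of these vanishes once all $x_\ell$ are set to zero, so the functions $\log$, $\Li_2$, $\Li_4$ pull back to the zero function on $\Sel^{(1,1)}_{S,\PL,4}(X)$, hence vanish on the scheme-theoretic image $\loc_p(\Sel^{(1,1)}_{S,\PL,4}(X))$ as well.

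Finally I would invoke the definition of the refined Chabauty--Kim locus as $X(\bZ_p)_{\{2,q\},\PL,4}^{(1,1)} = j_p^{-1}(\loc_p(\Sel^{(1,1)}_{S,\PL,4}(X)))$ and the fact that the coordinates $\log,\Li_2,\Li_4$ on the local Selmer scheme pull back along $j_p$ to the Coleman functions $\log(z), \Li_2(z), \Li_4(z)$. Composing with $j_p$ then immediately yields the three claimed identities on $X(\bZ_p)_{\{2,q\},\PL,4}^{(1,1)}$.

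There is really no obstacle to speak of: the proof is entirely an inspection of the explicit formulas of \Cref{thm: localisation map} in light of the linear equations of \Cref{thm:refined-selmer-scheme-equations}. The only substantive point is the structural observation that, in the polylogarithmic depth~$4$ localisation map, the coordinates $\log, \Li_2, \Li_4$ are exactly those whose expressions are killed by the vanishing of all $x_\ell$; the remaining coordinates $\Li_1$ and $\Li_3$ still contain the free terms $\sum_\ell a_{\tau_\ell} y_\ell$ respectively $a_{\sigma_3} z_3$ and therefore do not give rise to additional Coleman functions vanishing on this locus.
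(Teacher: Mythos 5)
Your proof is correct and follows exactly the same route as the paper's: apply \Cref{thm:refined-selmer-scheme-equations} to identify the $(1,1)$-refined Selmer scheme with $\{x_2 = x_q = 0\}$, substitute into the localisation formulas of \Cref{thm: localisation map}, observe that every monomial in the expressions for $\log$, $\Li_2$, $\Li_4$ contains a factor $x_\ell$, and conclude via the Chabauty--Kim diagram. Your added remark about $\Li_1$ and $\Li_3$ retaining the free terms $\sum_\ell a_{\tau_\ell} y_\ell$ and $a_{\sigma_3} z_3$ is a correct and useful observation, though not part of the paper's argument.
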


	\begin{proof}
		By \Cref{thm:refined-selmer-scheme-equations}, the refined Selmer scheme $\Sel^{(1,1)}_{S,\PL,4}(X)$ is the closed subscheme of  $\rH^1_{f,S}(G_{\bQ}, U_{\PL,4})$ 
		defined by $x_2 = x_q = 0$. 
		The restriction of the localisation map $\loc_p$ to this refined subscheme is given by setting $x_2$ and $x_q$ equal to zero in \Cref{thm: localisation map}.
%
		The functions $\log$, $\Li_2$, $\Li_4$ pull back to~$0$ on $\Sel^{(1,1)}_{S,\PL,4}(X)$, so their pullbacks along~$j_p$ vanish on $X(\bZ_p)_{\{2,q\},\PL,4}^{(1,1)}$.
	\end{proof}
	
	\begin{rem}
		\label{rem:11-locus}
		The set $X(\bZ_p)_{\{2,q\},\PL,4}^{(1,1)}$ for $S = \{2,q\}$ agrees with the set $X(\bZ_p)_{\{2\},\PL,4}^{(1)}$ for $S = \{2\}$; in particular, it is independent of the prime~$q$. This reflects the fact that $X(\bZ[\frac1{2q}])_{(1,1)} = \{-1\}$ for all~$q$. 
		The two equations $\log(z) = 0$ and $\Li_2(z) = 0$ were already derived for the depth~2 locus in \cite[Proposition~3.8]{BBKLMQSX}. We believe that these two functions suffice to cut out exactly the set $\{-1\}$ and we verified this computationally using the method described in Remark~3.6 of loc.\ cit.\ for all odd primes $p < 10^5$. Thus, Conjecture~\ref{kim-conjecture-sigma} for $\Sigma = (1,1)$ holds in depth~$2$ for those primes. 
		
		Using the localisation map in infinite depth \cite[Theorem~5.6]{BKL:chabauty_kim_sc} one sees easily that $X(\bZ_p)_{\{2,q\},\PL,n}^{(1,1)} = X(\bZ_p)_{\{2\},\PL,n}^{(1)}$ holds in fact for any depth~$n$. By Corollary~5.16 of loc.\ cit., the latter locus is exactly $\{-1\}$ when $n = \max(1,p-3)$. Thus, Conjecture~\ref{kim-conjecture-sigma} for $S = \{2,q\}$ and refinement condition $\Sigma = (1,1)$ holds for any choice of~$p$ in sufficiently high depth.
	\end{rem}
	
	\subsection{The $(1,0)$-locus}
	
	\begin{thm}
		\label{thm: 10 equations}
		The following two equations hold on the $(1,0)$-component of the refined Chabauty--Kim locus $X(\bZ_p)_{\{2,q\},\PL,4}^{(1,0)}$:
		\begin{align}
			\label{eq:depth2-function-thm}
			&a_{\tau_2} a_{\tau_q} \Li_2(z) - a_{\tau_q \tau_2} \log(z) \Li_1(z) = 0,\\
			\label{eq:depth4-function-thm}
			&a_{\sigma_3} a_{\tau_q}^3 a_{\tau_2} \Li_4(z) - a_{\tau_q}^2 a_{\tau_2} a_{\tau_q \sigma_3} \log(z) \Li_3(z) \\
			&\qquad - \bigl(a_{\sigma_3} a_{\tau_q\tau_q\tau_q\tau_2} - a_{\tau_q\sigma_3} a_{\tau_q \tau_q \tau_2}\bigr) \log(z)^3 \Li_1(z) = 0. \notag
		\end{align}
	\end{thm}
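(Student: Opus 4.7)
The plan is to mirror the proof of \Cref{thm: 11 equations} but for the refinement condition $\Sigma = (1,0)$. By \Cref{thm:refined-selmer-scheme-equations}, this refinement cuts out $\Sel^{(1,0)}_{\{2,q\},\PL,4}(X)$ inside $\rH^1_{f,S}(G_{\bQ}, U_{\PL,4})$ by the two linear equations $x_2 = 0$ (from $\Sigma_2 = 1$) and $y_q = 0$ (from $\Sigma_q = 0$). Substituting these into the formulas of \Cref{thm: localisation map} collapses most sums: $\loc_p^\sharp \log$ and $\loc_p^\sharp \Li_1$ reduce to scalar multiples of the single monomials $x_q$ and $y_2$ respectively, while $\loc_p^\sharp \Li_2$, $\loc_p^\sharp \Li_3$ and $\loc_p^\sharp \Li_4$ retain only those monomials in which every $x$-factor is $x_q$ and every $y$-factor is $y_2$, together with the $z_3$-contributions. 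The image of the restricted localisation map is thus a $3$-dimensional subscheme of $\rH^1_f(G_p, U_{\PL,4}) = \bA^5$, and the two equations in the theorem will be identified as two independent polynomial relations among $\log, \Li_1, \Li_2, \Li_3, \Li_4$ that vanish on this image.

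The first relation reproduces the known depth-$2$ equation: both $\Li_2$ and the product $\log \cdot \Li_1$ pull back to scalar multiples of the single monomial $x_q y_2$, so eliminating this common factor yields precisely \eqref{eq:depth2-function-thm}.

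For the second relation the strategy is to isolate the monomials that appear only at depth $\geq 3$. A direct inspection of the collapsed formulas shows that beyond the depth-$2$ monomials, only two new ones appear, namely $U \coloneqq x_q^3 y_2$ and $V \coloneqq x_q z_3$. The point is that $\log^3 \cdot \Li_1$ pulls back to a scalar multiple of $U$ alone, while $\Li_4$ and $\log \cdot \Li_3$ pull back to $\bQ_p$-linear combinations of $U$ and $V$. Treating $(U,V)$ as unknowns, one eliminates $V$ between the expressions for $\Li_4$ and $\log \cdot \Li_3$, substitutes the expression for $U$ obtained from $\log^3 \cdot \Li_1$, and clears denominators. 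The result is exactly \eqref{eq:depth4-function-thm}. Pulling both identities back along $j_p$ then converts them into Coleman-function identities valid on $X(\bZ_p)_{\{2,q\},\PL,4}^{(1,0)}$, as required.

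The arithmetic throughout is entirely mechanical linear algebra in the unknowns $U$ and $V$. The only genuinely subtle point — not logically needed for the statement of the theorem, but essential for it to carry real content — is the nonvanishing of the coefficient $a_{\sigma_3} a_{\tau_q\tau_q\tau_q\tau_2} - a_{\tau_q\sigma_3} a_{\tau_q\tau_q\tau_2}$ in front of $\log(z)^3 \Li_1(z)$; if it vanished, \eqref{eq:depth4-function-thm} would follow tautologically from \eqref{eq:depth2-function-thm} (multiplied by $\log(z)^2$). This is the main obstacle one must address to make the theorem useful in practice, and is handled separately in \Cref{rem:eq-nontrivial}.
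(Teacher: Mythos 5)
Your proof is correct and follows essentially the same route as the paper: impose $x_2 = y_q = 0$ from \Cref{thm:refined-selmer-scheme-equations}, collapse the localisation formulas of \Cref{thm: localisation map}, eliminate the monomial $x_q y_2$ to recover \eqref{eq:depth2-function-thm}, and then eliminate first $x_q z_3$ (between $\Li_4$ and $\log\cdot\Li_3$) and then $x_q^3 y_2$ (against $\log^3\cdot\Li_1$) to obtain \eqref{eq:depth4-function-thm}. The linear-algebra bookkeeping with $U = x_q^3 y_2$ and $V = x_q z_3$ matches what the paper does with ``eliminate $z_3$, then combine with $\log^3 \Li_1$.''

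One inaccuracy in the closing paragraph, which you rightly flag as not needed for the theorem itself: if the coefficient $a_{\sigma_3} a_{\tau_q\tau_q\tau_q\tau_2} - a_{\tau_q\sigma_3} a_{\tau_q\tau_q\tau_2}$ of $\log(z)^3 \Li_1(z)$ vanished, \eqref{eq:depth4-function-thm} would \emph{not} collapse to a tautological consequence of \eqref{eq:depth2-function-thm} multiplied by $\log(z)^2$ — the latter involves $\log^2 \Li_2$ and $\log^3 \Li_1$, while \eqref{eq:depth4-function-thm} with $c=0$ involves $\Li_4$ and $\log\Li_3$, so they constrain different monomials and no such implication exists. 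The actual nontriviality worry handled in \Cref{rem:eq-nontrivial} is that all three coefficients $a,b,c$ in \eqref{eq:10-equation-simple} could simultaneously vanish (e.g.\ the coefficient of $\Li_4$ is a multiple of $a_{\sigma_3} = \zeta(3)$, whose nonvanishing is only conditionally known); the remark resolves this by a rank argument showing that the kernel of the $3\times 2$ pullback map is always nontrivial.
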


	\begin{proof}
		By \Cref{thm:refined-selmer-scheme-equations}, the refined Selmer scheme $\Sel^{(1,0)}_{S,\PL,4}(X)$ is the closed subscheme of $\rH^1_{f,S}(G_{\bQ}, U_{\PL,4})$ 
		defined by $x_2 = 0$ and $y_q = 0$. Denote the inclusion by~$i_\Sigma$. The restriction of the  localisation map $\loc_p$ to this refined subscheme is given by setting $x_2$ and $y_q$ equal to zero in \Cref{thm: localisation map}:
		\begin{align*}
			(\loc_p \circ i_\Sigma)^\sharp \log &= a_{\tau_q} x_q,\\
			(\loc_p \circ i_\Sigma)^\sharp \Li_1 &= a_{\tau_2} y_2,\\
			(\loc_p \circ i_\Sigma)^\sharp \Li_2 &= a_{\tau_q \tau_2} x_q y_2,\\
			(\loc_p \circ i_\Sigma)^\sharp \Li_3 &= a_{\tau_q\tau_q \tau_2} x_q^2 y_2 + a_{\sigma_3} z_3,\\
			(\loc_p \circ i_\Sigma)^\sharp \Li_4 &= a_{\tau_q \tau_q \tau_q \tau_2} x_q^3 y_2 + a_{\tau_q \sigma_3} x_q z_3.
		\end{align*}
		The linear combination $a_{\tau_2} a_{\tau_q} \Li_2 - a_{\tau_q \tau_2} \log \cdot \Li_1$ clearly pulls back to zero along $\loc_p \circ i_{\Sigma}$, which yields Equation~\eqref{eq:depth2-function-thm}. A slightly longer calculation yields the second equation: form a linear combination of $\Li_4$ and $\log \cdot \Li_3$ to eliminate the variable~$z_3$, resulting in a scalar multiple of $x_q^3 y_2$. Then form a linear combination with $\log^3 \cdot \Li_1$ to get a function pulling back to zero along $\loc_p \circ i_{\Sigma}$. 
	\end{proof}

	\begin{rem}
		\label{rem:depth2-equation}
		Equation~\eqref{eq:depth2-function-thm} alone defines the $(1,0)$-refined Chabauty--Kim locus in depth \emph{two}. It was first derived in \cite{BBKLMQSX} and can be rewritten in a more symmetric form as $a_{\tau_2 \tau_q} \Li_2(z) = a_{\tau_q \tau_2} \Li_2(1-z)$.
	\end{rem}
	
	\subsection{Nontrivial Kim functions}
	\label{sec:nontriviality}
	
	It is not known unconditionally whether Equation~\eqref{eq:depth4-function-thm} is nontrivial for every choice of auxiliary prime~$p$. We can however show that there is \emph{some} nontrivial equation of the same shape.
	
	\begin{thm}
		\label{thm:nontrivial-eq}
		There exists a nontrivial equation of the form
		\begin{equation}
			\label{eq:10-equation-simple}
			a \Li_4(z) + b \log(z) \Li_3(z) + c \log(z)^3 \Li_1(z) = 0
		\end{equation}
		with $a,b,c \in \bQ_p$ which holds on the Chabauty--Kim locus $X(\bZ_p)_{\{2,q\},\PL,4}^{(1,0)}$.
	\end{thm}
	
	Non-triviality means that the coefficients $a,b,c$ are not all zero. In this case, the left hand side of Eq.~\eqref{eq:10-equation-simple} is a nonzero Coleman function on $X(\bZ_p)$ by the Zariski-density of the $p$-adic unipotent Albanese map $j_p$ \cite[Theorem~1]{kim:albanese}. 
	
	\begin{proof}[Proof of \Cref{thm:nontrivial-eq}]
		In the proof of \Cref{thm: 10 equations}, the three monomials $\Li_4$, $\log\cdot \Li_3$, $\log^3 \cdot \Li_1$ all pull back under $\loc_p \circ i_{\Sigma}$ to linear combinations of the two monomials $x_q^3 y_2$, $x_q z_3$. This gives a $\bQ_p$-linear map from a $3$-dimensional space to a $2$-dimensional space, whose kernel must be nontrivial. 
	\end{proof}
	
	We can make \Cref{thm:nontrivial-eq} more concrete: we know that $a_{\tau_2} \neq 0$ and $a_{\tau_q} \neq 0$, so if we knew that $a_{\sigma_3} \neq 0$ as well, then the coefficient of $\Li_4$ in Equation~\eqref{eq:depth4-function-thm} would be nonzero and thus the equation would already be nontrivial. If $a_{\sigma_3} = 0$ on the other hand, then 
	\[ a_{\tau_q}^2 a_{\tau_2} \Li_3(z) - a_{\tau_q \tau_q \tau_2} \log(z)^2 \Li_1(z) = 0 \]
	would be a nontrivial equation which holds on $X(\bZ_p)_{\{2,q\},\PL,4}^{(1,0)}$. An equation of the form \eqref{eq:10-equation-simple} can then be obtained by multiplying by $\log(z)$.
	
	It is conjectured that $a_{\sigma_3} \neq 0$ for every choice of auxiliary prime~$p$. Indeed, the constant $a_{\sigma_3}$ equals the $p$-adic zeta value $\zeta(3)$ whose non-vanishing is implied by a $p$-adic period conjecture \cite[Conj.~2.25]{CDC:polylog1} \cite[Conj.~4]{yamashita:bounds}. The non-vanshing of $\zeta(3)$ is known when $p$ is a regular prime \cite[Rem.~2.20\,(i)]{furusho:p-adic}.
	
	\subsection{Explicit equations for $S = \{2,3\}$}
	\label{sec:explicit-equations}
	
	Determining the $p$-adic constants $a_u$ which appear in the equations \eqref{eq:depth2-function-thm} and \eqref{eq:depth4-function-thm} is difficult in general. It is complicated by the fact that their values depend on a choice of free generators $\{\tau_{\ell} : \ell \in S\} \cup \{\sigma_3,\sigma_5,\ldots\}$ for the Lie algebra of the unipotent mixed Tate Galois group $U_{\bQ,S}^{\MT}$ (see \cite[§4.1]{CDC:polylog1} for details). Those constants $a_u$ with a single letter as subscript are however canonical: $a_{\tau_\ell} = \log(\ell)$ for $\ell \in S$ are $p$-adic logarithms, and $a_{\sigma_3} = \zeta(3)$ is a $p$-adic zeta value. The values $a_{\tau_q \tau_2}$ which appear in Eq.~\eqref{eq:depth2-function-thm} are examples of ``Dan-Cohen--Wewers coefficients''. They are also canonical and we recall in §\ref{sec:DCW-coeffs} how to compute them. 
	The constants in Eq.~\eqref{eq:depth4-function-thm} are not known for a general prime~$q$. They have however been computed in the case $S = \{2,3\}$, exploiting the fact that $-3$,~$3$ and~$9$ are known $\bZ[1/6]$-integral points of the thrice-punctured line.
	
	\begin{prop}
		\label{thm: periods}
		For a suitable choice of $\sigma_3$ we have:
		\begin{align*}
			a_{\tau_3 \tau_2} &= -\Li_2(3),\\
			a_{\tau_3 \tau_3 \tau_2} &= -\Li_3(3),\\
			a_{\tau_3 \tau_3 \tau_3 \tau_2} &= -\Li_4(3),\\
			a_{\tau_3 \sigma_3} &= \frac{18}{13} \Li_4(3) - \frac3{52} \Li_4(9).
		\end{align*}
	\end{prop}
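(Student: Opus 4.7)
The plan is to evaluate the localisation map of \Cref{thm: localisation map} at the Selmer-theoretic images $j_S(z)$ of the three known $\bZ[1/6]$-integral points $z \in \{3, 9, -3\}$, and to exploit the commutativity of the Chabauty--Kim diagram~\eqref{eq:CK-diagram}, which gives
\[
\Li_n^p(z) \;=\; (\loc_p^\sharp \Li_n)\bigl(j_S(z)\bigr) \qquad (n = 1,\dots,4).
\]
The canonical coordinates $x_\ell, y_\ell$ at $j_S(z)$ are determined by the abelian identities~\eqref{eq:loc-log}--\eqref{eq:loc-Li1} together with the factorisations of $z$ and $1-z$ as $\{2,3\}$-units:
\[
(x_2, y_2, x_3, y_3) = (0,-1,1,0),\ (0,-3,2,0),\ (0,-2,1,0)
\]
at $j_S(3), j_S(9), j_S(-3)$ respectively.

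The first three identities then follow from the single point $z = 3$. I would fix the \emph{suitable choice of $\sigma_3$} by normalising $z_3\bigl(j_S(3)\bigr) = 0$. Under this normalisation, every monomial in \Cref{thm: localisation map} that could contribute to $(\loc_p^\sharp \Li_n)(j_S(3))$ but involves $x_2$, $y_3$, or $z_3$ vanishes, and the formula collapses to the single term $-a_{\tau_3 \cdots \tau_3 \tau_2}$ (with $n-1$ copies of $\tau_3$). Comparison with $\Li_n^p(3)$ directly yields $a_{\tau_3\tau_2} = -\Li_2(3)$, $a_{\tau_3\tau_3\tau_2} = -\Li_3(3)$, and $a_{\tau_3\tau_3\tau_3\tau_2} = -\Li_4(3)$.

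For the fourth identity I would substitute these constants into $\loc_p^\sharp \Li_3$ and $\loc_p^\sharp \Li_4$ evaluated at both $j_S(9)$ and $j_S(-3)$. The two $\Li_3$ equations determine $z_3\bigl(j_S(9)\bigr)$ and $z_3\bigl(j_S(-3)\bigr)$ as $\zeta(3)^{-1}$ times specific $p$-adic combinations of the $\Li_3$-values, with mutual consistency amounting to the distribution identity $\Li_3(9) = 4\bigl(\Li_3(3)+\Li_3(-3)\bigr)$. Plugging these into the two $\Li_4$ equations and using the analogous identity $\Li_4(9) = 8\bigl(\Li_4(3)+\Li_4(-3)\bigr)$ collapses everything to
\[
a_{\tau_3\sigma_3} \;=\; \frac{\zeta(3)\,\bigl(\Li_4(9) - 24\,\Li_4(3)\bigr)}{2\,\bigl(\Li_3(9) - 12\,\Li_3(3)\bigr)},
\]
and the claimed formula emerges upon invoking the $p$-adic motivic identity $\Li_3(9) - 12\,\Li_3(3) = -\tfrac{26}{3}\,\zeta(3)$.

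The main obstacle I expect is establishing this last identity, or equivalently $\Li_3(-3) = 2\,\Li_3(3) - \tfrac{13}{6}\,\zeta(3)$. This is a specific linear relation in the one-dimensional primitive weight-$3$ part of the mixed Tate motivic Galois Hopf algebra of $\bZ[1/6]$ (the relevant $K$-group of $\bZ[1/6]$ has rank one, generated by $\zeta(3)$), and its rational coefficient $-\tfrac{13}{6}$ is precisely what produces the rational constants $\tfrac{18}{13}$ and $-\tfrac{3}{52}$ in the final formula for $a_{\tau_3\sigma_3}$.
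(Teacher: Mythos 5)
The paper's proof of this proposition is a straight citation (to Corwin--Dan-Cohen for $a_{\tau_3\tau_3\tau_3\tau_2}$ and $a_{\tau_3\sigma_3}$, to Dan-Cohen--Jarossay for $a_{\tau_3\tau_3\tau_2}$, and to the earlier joint paper for $a_{\tau_3\tau_2}$), so your proposal is a genuinely different thing: a direct derivation from the shape of the localisation map in \Cref{thm: localisation map}, using the known integral points $3,9,-3$ and the commutativity of diagram~\eqref{eq:CK-diagram}. The Selmer coordinates you compute, $(x_2,y_2,x_3,y_3)=(0,-1,1,0)$, $(0,-3,2,0)$, $(0,-2,1,0)$, are correct, and the derivation of the first three constants by evaluating \eqref{eq:loc-Li2}--\eqref{eq:loc-Li4} at $j_S(3)$ under the normalisation $z_3(j_S(3))=0$ is clean and correct: since $x_2=y_3=0$ there, only the single monomial $x_3^{n-1}y_2$ survives, and it equals $-1$. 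Your formula
\[
a_{\tau_3\sigma_3}=\frac{\zeta(3)\bigl(\Li_4(9)-24\Li_4(3)\bigr)}{2\bigl(\Li_3(9)-12\Li_3(3)\bigr)}
\]
from the point $z=9$ (and its equivalent form from $z=-3$, consistent via the distribution relations for $\Li_3$ and $\Li_4$) also checks out, and the arithmetic by which the $\Li_3$-identity $\Li_3(9)-12\Li_3(3)=-\tfrac{26}{3}\zeta(3)$ produces $\tfrac{18}{13}$ and $-\tfrac{3}{52}$ is correct.

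The gap is precisely the one you flag: the identity $\Li_3(9)-12\Li_3(3)=-\tfrac{26}{3}\zeta(3)$, equivalently $z_3(j_S(9))=-\tfrac{26}{3}$ in your normalisation. Your rank-one observation (from $\dim_\bQ K_5(\bZ[1/6])\otimes\bQ=1$, or the fact that the weight-$3$ degree-$1$ part of the Lie coalgebra is one-dimensional) shows only that $\Li_3(9)-12\Li_3(3)$ is \emph{some} rational multiple of $\zeta(3)$ — indeed $z_3(j_S(9))\in\bQ$ because the motivic Selmer scheme is a $\bQ$-scheme and $j_S(9)$ is a $\bQ$-point — but it gives no handle on the rational number $-26/3$. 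Determining that number is exactly the substantive computation carried out in the references (CDC §4.3.3, via explicit manipulation of the motivic coproduct with chosen free Lie generators); your derivation reproduces everything around it but does not supply it. Without this the fourth claimed formula is reduced to, but not derived from, the three preceding ones. Additionally, you should note that even after imposing $z_3(j_S(3))=0$ the choice of $\sigma_3$ is not unique (the weight-$3$ bracket space of the free Lie algebra on $\tau_2,\tau_3$ is two-dimensional), so one must also check that the residual freedom does not affect the claimed formula, or be content with the weaker reading of ``a suitable choice of $\sigma_3$.''
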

	
	\begin{proof}
		The value of $a_{\tau_3 \tau_2}$ is derived in \cite[§3.5]{BBKLMQSX}. The coefficients $a_{\tau_3 \tau_3 \tau_3 \tau_2}$ and $a_{\tau_3 \sigma_3}$ are determined in \cite[§4.3.3]{CDC:polylog1}, and a proof for $a_{\tau_3 \tau_3 \tau_2}$ (using the same choice of~$\sigma_3$) can be found in \cite[§5.11]{DCJ:M05}. Note that the authors use different notation and conventions. For example $a_{\tau_3 \tau_3 \tau_3 \tau_2}$ is written $f_{\tau_2 \tau \tau \tau}$ in \cite{CDC:polylog1}.
		A different expression for $a_{\tau_3 \tau_3 \tau_3 \tau_2}$ is given in \cite[§5.23]{DCJ:M05} (denoted $f_{\tau vvv}$ there) but it appears to be incorrect based on numerical evaluation.
	\end{proof}
	
	With this the equations for $X(\bZ_p)_{\{2,3\},\PL,4}^{(1,0)}$ from \Cref{thm: 10 equations} are completely determined. It is in principle possible to compute the constants $a_u$ for more general sets~$S$; see \cite{DC:mixedtate2} for an algorithm which achieves this under various conjectures. 
	But if we have enough $S$-integral points available, which only is the case for $S = \{2,3\}$, there is a different way to obtain an equation of the form \eqref{eq:10-equation-simple} which circumvents the problem of determining the $a_u$, and does not require any non-canonical choices. It uses an argument similar to \cite[Cor.~9.1]{brown:IntegralPoints}.
	
	\begin{thm}
		\label{thm:det-equation}
		Any $z$ in the Chabauty--Kim locus $X(\bZ_p)_{\{2,3\},\PL,4}^{(1,0)}$ satisfies
		\begin{equation}
			\label{eq:det-equation}
			\det \begin{pmatrix}
				\Li_4(z) & \log(z) \Li_3(z) & \log(z)^3 \Li_1(z) \\
				\Li_4(3) & \log(3) \Li_3(3) & \log(3)^3 \Li_1(3) \\
				\Li_4(9) & \log(9) \Li_3(9) & \log(9)^3 \Li_1(9) 
			\end{pmatrix} = 0.
		\end{equation}
	\end{thm}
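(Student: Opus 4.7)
The plan is to exploit the existence of \emph{some} nontrivial depth-$4$ equation of the form
\[ a \Li_4(z) + b \log(z) \Li_3(z) + c \log(z)^3 \Li_1(z) = 0 \]
on the locus $X(\bZ_p)_{\{2,3\},\PL,4}^{(1,0)}$ together with the availability of two explicit $\{2,3\}$-integral points in this refined locus, and then argue by linear dependence in a three-dimensional space, in the spirit of \cite[Cor.~9.1]{brown:integral_points}.

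First, I would invoke Remark~\ref{rem:eq-nontrivial} to produce a triple $(a,b,c) \in \bQ_p^3 \smallsetminus \{0\}$ such that the Coleman function $F(w) \coloneqq a \Li_4(w) + b \log(w) \Li_3(w) + c \log(w)^3 \Li_1(w)$ vanishes identically on $X(\bZ_p)_{\{2,3\},\PL,4}^{(1,0)}$. The key point is that $(a,b,c)$ is obtained from the linear map sending the three monomials $\Li_4,\ \log\cdot\Li_3,\ \log^3\cdot\Li_1$ to their pullbacks along $\loc_p \circ i_{(1,0)}$, which land in the two-dimensional space spanned by $x_q^3 y_2$ and $x_q z_3$; the kernel of this map is therefore nonzero, yielding $(a,b,c)$.

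Next I would check that the points $z = 3$ and $z = 9$ lie in the refined set $X(\bZ[1/6])_{(1,0)}$, hence in $X(\bZ_p)_{\{2,3\},\PL,4}^{(1,0)}$. This is a direct reduction calculation: both are odd, so their mod-$2$ reductions equal $1 = \Sigma_2$; and both are divisible by $3$, so their mod-$3$ reductions equal $0 = \Sigma_3$. Consequently $F(3) = 0$ and $F(9) = 0$.

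Finally, for any $z \in X(\bZ_p)_{\{2,3\},\PL,4}^{(1,0)}$, the three row vectors
\[ \bigl(\Li_4(w),\ \log(w)\Li_3(w),\ \log(w)^3 \Li_1(w)\bigr), \quad w \in \{z,3,9\}, \]
all lie in the hyperplane $aX + bY + cZ = 0$ of $\bQ_p^3$. Since $(a,b,c) \neq 0$, these three vectors are linearly dependent, so the $3\times 3$ matrix in Eq.~\eqref{eq:det-equation} has vanishing determinant. The only potential subtlety is confirming the reduction behaviour in the second step and ensuring that $(a,b,c)$ is independent of $z$ (it is a fixed output of the Selmer-scheme linear algebra), but neither of these poses a genuine obstacle, so the proof is essentially a short linear-algebra argument on top of Remark~\ref{rem:eq-nontrivial}.
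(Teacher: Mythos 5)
Your proposal is correct and follows essentially the same route as the paper: invoke Remark~\ref{rem:eq-nontrivial} to get a nontrivial relation $(a,b,c)$ on the locus, note that $3$ and $9$ lie in $X(\bZ[1/6])_{(1,0)}$, and conclude by linear dependence of three vectors in a hyperplane. The checking of the mod-$2$ and mod-$3$ reductions is a nice explicit touch that the paper leaves implicit, but the argument is the same.
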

	
	\begin{proof}
		By \Cref{thm:nontrivial-eq}, some nontrivial equation of the form~\eqref{eq:10-equation-simple}
		holds on the Chabauty--Kim locus. This means that the vectors 
		\[ (\Li_4(x), \log(x)\Li_3(x), \log(x)^3 \Li_1(x))\] 
		for $x \in X(\bZ_p)_{\{2,3\},\PL,4}^{(1,0)}$ lie in some $2$-dimensional linear subspace of $\bQ_p^3$. In particular, any three vectors of this form are linearly dependent. Taking $x = 3$ and $x = 9$ gives two such vectors since these points belong to $X(\bZ[1/6])_{(1,0)}$. 
	\end{proof}
	
	\begin{rem}
		\label{rem:brown-comparison}
		In \cite[Cor.~9.1]{brown:IntegralPoints}, determinant equations like Eq.~\eqref{eq:det-equation} are derived for \emph{unrefined} Chabauty--Kim loci $X(\bZ_p)_{S,\PL,n}$ given a sufficient supply of $S$-integral points. When $S$ has size two, depth~$n=4$ is not sufficient for the unrefined Chabauty--Kim locus to be finite. One needs at least $n = 6$ but then one would need more than $w = 252$ $S$-integral points to get a determinant equation. Going to even higher depth reduces this to $w = 64$ but this is still too large since $X(\bZ[1/2q])$ contains only~21 points for $q = 3$ and even fewer for $q > 3$. 
	\end{rem}

	\section{Computing Chabauty--Kim loci in depth~2}
	\label{sec:depth2-loci}
	
	Let $S = \{2,q\}$ for any odd prime~$q$ and let $p \not\in S$ be a choice of auxiliary prime. In this section we investigate the depth~2 Chabauty--Kim loci 
	\[ X(\bZ_p)_{\{2,q\},2}^{(1,0)} \] 
	for various combinations of $p$ and $q$. 
	By \cite[Proposition~3.9]{BBKLMQSX}, the locus is cut out in $X(\bZ_p)$ by the single equation
	\begin{equation}
		\label{eq:depth2-function}
		a_{\tau_2} a_{\tau_q} \Li_2(z) - a_{\tau_q \tau_2} \log(z) \Li_1(z) = 0.
	\end{equation}
	This is the first of the two functions found in \Cref{thm: 10 equations} above. We have an inclusion $X(\bZ_p)_{\{2,q\},2}^{(1,1)} \subseteq X(\bZ_p)_{\{2,q\},2}^{(1,0)}$ as the equations for the $(1,1)$-locus, $\log(z) = \Li_2(z) = 0$, imply Equation~\eqref{eq:depth2-function}. As a consequence, once we know the locus $X(\bZ_p)_{\{2,q\},2}^{(1,0)}$, we actually know the total refined Chabauty--Kim locus $X(\bZ_p)_{\{2,q\},2}^{\min}$ by taking $S_3$-orbits (cf. \cite[Theorem~B]{BBKLMQSX}).
	
	The problem of finding the solutions in $X(\bZ_p)$ to Equation~\eqref{eq:depth2-function} is resolved by the code accompanying this paper \cite{sage_code}.
	The task can be broken down into the following steps:
	\begin{enumerate}[label=\arabic*.]
		\item Determine the constant $a_{\tau_q \tau_2}$ appearing in Eq.~\eqref{eq:depth2-function}. 
		\item For each residue disc in $X(\bZ_p)$, compute a power series representing the Coleman function on the left hand side of Eq.~\eqref{eq:depth2-function} on that disc.
		\item For each residue disc, find the $p$-adic roots of the power series.
	\end{enumerate}
	
	Concerning step~1, note that only $a_{\tau_q \tau_2}$ needs to be determined; the constants $a_{\tau_2} = \log(2)$ and $a_{\tau_q} = \log(q)$ are simply given by $p$-adic logarithms.
	
	\subsection{An example}
	\label{sec: depth2-example}
	Before describing the steps of the algorithm in more detail, we demonstrate the function 
	\begin{center}
		\texttt{CK\_depth\_2\_locus(p, q, N, a\_q2)}
	\end{center}
	of the accompanying Sage code. Assume we want to compute the locus $X(\bZ_5)_{\{2,3\},2}^{(1,0)}$, i.e., we are looking at the thrice-punctured line over~$\bZ[1/6]$ (so $q = 3$) and choose the auxiliary prime $p = 5$. The argument~$N$ specifices the $p$-adic precision for the coefficients of power series, and the argument \texttt{a\_q2} is the constant $a_{\tau_q \tau_2}$, which in the case $q = 3$ is given by $a_{\tau_3 \tau_2} =-\Li_2(3)$ by \Cref{thm: periods}.
	The code
	\usemintedstyle{borland}
	\begin{minted}[frame=lines,baselinestretch=1.2,bgcolor=gray!8,
fontsize=\footnotesize]{python}
p = 5; q = 3
a = -Qp(p)(3).polylog(2)
CK_depth_2_locus(p,q,10,a)
	\end{minted}
	outputs the following list of six $5$-adic numbers:
	\begin{minted}[frame=lines,baselinestretch=1.2,bgcolor=gray!8,fontsize=\footnotesize]{python}
[2 + O(5^9),
 2 + 4*5 + 4*5^2 + 4*5^3 + 4*5^4 + 4*5^5 + 4*5^6 + 4*5^7 + 4*5^8 + O(5^9),
 3 + O(5^6),
 3 + 5^2 + 2*5^3 + 5^4 + 3*5^5 + O(5^6),
 4 + 4*5 + 4*5^2 + 4*5^3 + 4*5^4 + 4*5^5 + 4*5^6 + 4*5^7 + 4*5^8 + O(5^9),
 4 + 5 + O(5^9)]
	\end{minted}
	These form the refined Chabauty--Kim locus $X(\bZ_5)_{\{2,3\},2}^{(1,0)}$. This locus must contain the $\{2,3\}$-integral points $\{-3,-1,3,9\}$, as these are the points in $X(\bZ[1/6])$ whose mod-2 reduction lies in $X \cup \{1\}$ and whose mod-3 reduction lies in $X \cup \{0\}$. Indeed, we recognise those among the numbers in the list. We moreover observe that $z = 2$ satisfies Eq.~\eqref{eq:depth2-function} as a consequence of $\Li_2(2) = 0$ and $\Li_1(2) = -\log(1-2) = 0$, so it belongs to $X(\bZ_5)_{\{2,3\},2}^{(1,0)}$ as well. It is a $\{2,3\}$-integral (even $\{2\}$-integral) point of~$X$ but we would not a priori expect it to be part of the $(1,0)$-refined locus because it reduces to $0$, not~$1$, modulo~$2$. Indeed we will see later that it does not survive in depth~4. Finally, there is the exceptional point
	\begin{equation}
		\label{eq:extra-point}
		\mathtt{z_0 = 3 + 5^2 + 2*5^3 + 5^4 + 3*5^5 + O(5^6)}
	\end{equation}
	which is not a $\{2,3\}$-integral point of~$X$ and in fact seems to be transcendental over the rationals. As we noted at the end of \cite{BBKLMQSX}, this point is responsible for Kim's Conjecture not holding in depth~2. Again, we will see later that this exceptional solution can be ruled out by going to depth~4.

	We now describe the steps for computing $X(\bZ_p)^{(1,0)}_{\{2,q\},2}$ in general.
	
	\subsection{Computing DCW coefficients}
	\label{sec:DCW-coeffs}
	
	The first step consists in computing the $p$-adic constant $a_{\tau_q \tau_2}$, an example of what is called a ``Dan-Cohen--Wewers coefficient'' (and denoted by $a_{q,2}$) in \cite{BBKLMQSX}. In some cases, by §3.5 and Lemma~2.2 of loc.\ cit., we have a simple expression for $a_{\tau_q \tau_2}$:
	\begin{enumerate}
		\item For $q = 3$ we have $a_{\tau_3 \tau_2} = -\Li_2(3) = -\frac12 \Li_2(-3) = -\frac16 \Li_2(9)$.
		\item If $q = 2^n + 1$ is a Fermat prime, we have $a_{\tau_q \tau_2} = -\frac1{n} \Li_2(q)$.
		\item If $q = 2^n-1$ is a Mersenne prime, we have $a_{\tau_q \tau_2} = -\frac1{n} \Li_2(-q)$.
	\end{enumerate}
	For general primes $q$ we only have an algorithm for expressing $a_{\tau_q \tau_2}$ as a $\bQ$-linear combination of dilogarithms of rational numbers. In general, the DCW coefficients $a_{\tau_{\ell} \tau_q}$ for $\ell,q \in S$ appear in the localisation map of the Chabauty--Kim diagram~\eqref{eq:CK-diagram}. By \Cref{thm: localisation map}, they are the coefficients in the bilinear polynomial $\loc_p^{\sharp}\Li_2 = \sum_{\ell,q\in S} a_{\tau_{\ell} \tau_q} x_{\ell} y_q$.
	An algorithm for computing $a_{\tau_{\ell} \tau_q}$ for any pair of primes $\ell,q \neq p$ is described in \cite[§2.3]{BBKLMQSX}, slightly generalising the original algorithm from \cite[§11]{DCW:explicitCK}. In short (and simplifying a bit), one considers the $\bQ$-vector space
	\[ E \coloneqq \bQ \otimes \bQ^\times. \]
	Write $[x] \coloneqq 1 \otimes x \in E$ for $x \in \bQ^\times$.
	By Tate's vanishing of the rational Milnor $K$-group $K_2(\bQ) \otimes {\bQ}$ \cite[Theorem~11.6]{milnor}, every element of $E \otimes E$ can be written as a $\bQ$-linear combination of Steinberg elements $[t] \otimes [1-t]$ where $t \in \bQ \smallsetminus \{0,1\}$. Given such a ``Steinberg decomposition'' in $E \otimes E$
	\begin{equation}
		\label{eq:steinberg decomposition}
		[\ell] \otimes [q] = \sum_i c_i\, [t_i] \otimes [1-t_i]
	\end{equation}
	for a pair of primes $(\ell,q)$, a formula for the DCW coefficient $a_{\tau_{\ell} \tau_q}$ is given by
	\begin{equation}
		\label{eq:DCW formula}
		a_{\tau_{\ell} \tau_q} = -\sum_i c_i \Li_2(t_i).
	\end{equation}
	The algorithm described in \cite{BBKLMQSX} is slightly more involved in that it computes Steinberg decompositions in the wedge square $E \wedge E$ rather than the tensor square (for efficiency) and takes care to avoid Steinberg elements $[t] \wedge [1-t]$ where $t$ or $1-t$ contains factors of~$p$ (to avoid choosing a branch of the $p$-adic logarithm). In loc.~cit., the right hand side of Eq.~\eqref{eq:steinberg decomposition} also contains additional terms of the form $[x] \otimes [y] + [y] \otimes [x]$, but those are expressable in terms of Steinberg elements as well, so they can be subsumed under the sum in Eq.~\eqref{eq:steinberg decomposition}. The algorithm described there is implemented in Sage \cite{KLS:dcwcoefficients}. It can be used to compute a Steinberg decomposition of $[2] \wedge [q]$ in $E \wedge E$, which can then be passed to the function
		\texttt{depth2\_constant}
	from \cite{sage_code} to compute the $p$-adic constant $a_{\tau_q \tau_2}$ with precision~$N$. 
	For example, taking $q = 19$ and $p = 7$, the code
\begin{minted}[frame=lines,baselinestretch=1.2,bgcolor=gray!8,fontsize=\footnotesize]{python}
p = 7; q = 19
_,dec = steinberg_decompositions(bound=20, p=p)
depth2_constant(p, q, 10, dec[2,q])
\end{minted}
	computes $a_{\tau_{19} \tau_2} \in \bQ_7$ as
	\[ \mathtt{ a_{\tau_{19} \tau_2} = 7^2 + 2*7^3 + 6*7^4 + 3*7^5 + 2*7^6 + 6*7^7 + 5*7^8 + O(7^{10})}. \]

	\subsection{Computing power series}
	
	The second step in the computation of the Chabauty--Kim locus $X(\bZ_p)_{\{2,q\},2}^{(1,0)}$ consists in computing on each residue disc a $p$-adic approximation of the power series representing the defining Coleman function~\eqref{eq:depth2-function}.
	We already discussed the computation of the coefficients, so the problem is reduced to finding power series for the polylogarithmic functions $\log(z)$, $\Li_1(z)$, and $\Li_2(z)$. Later we will also need the power series for $\Li_3(z)$ and $\Li_4(z)$, so we discuss here the general problem of computing power series for $\Li_n(z)$ for arbitrary~$n$. In order to achieve this, we adapt the work by Besser and de~Jeu \cite{besser:lipservice} which contains an algorithm for computing $\Li_n(z)$ for any $z \in \bC_p \smallsetminus \{1\}$. We can exploit the fact that we work in $\bZ_p$ rather than $\bC_p$ to improve the convergence of the power series. 
	
	For each $(p-1)$-st root of unity $\zeta$ in $\bZ_p$ write $U_{\zeta}$ for its residue disc, consisting of all elements of $\bZ_p$ reducing to $\zeta$ modulo~$p$. Those residue discs $U_{\zeta}$ with $\zeta \neq 1$ cover $X(\bZ_p)$. Any element $z \in U_{\zeta}$ can be written as $z = \zeta + pt$ with $t \in \bZ_p$ and we wish to compute $p$-adic approximations of the power series in the parameter~$t$ for $\log(\zeta + pt)$ and for $\Li_m(\zeta + pt)$ for various $m$. We denote the coefficients of the latter series by $(a_{m,k})_{k \geq 0}$, so that we have
	\begin{equation}
		\label{eq:polylog-series}
		\Li_m(\zeta + pt) = \sum_{k = 0}^\infty a_{m,k} t^k.
	\end{equation}
	
	\begin{defn}
		\label{def:approximation}
		Let $f(t) = \sum_{k=0}^\infty a_k t^k \in \bQ_p[\![t]\!]$ be a convergent power series on the unit disc (i.e., $|a_k|_p \to 0$). A \emph{$p$-adic approximation of order~$N$} of $f(t)$ consists of a nonnegative integer $k_0$ along with a polynomial $\tilde f(t) = \sum_{k = 0}^{k_0-1} \tilde a_{k} t^k \in \bQ_p[t]$ of degree $< k_0$ such that $v_p(a_k - \tilde a_k) \geq N$ for $k < k_0$ and $v_p(a_k) \geq N$ for all $k \geq k_0$.
	\end{defn}
	
	For the $p$-adic logarithm, computing the power series is straightforward:
	\begin{lemma}
		\label{thm:log-series}
		Given a prime~$p$ and a $(p-1)$-st root of unity $\zeta \neq 1$ in $\bZ_p$, the series expansion of $\log(\zeta + pt)$ on $U_{\zeta}$ is given by
		\begin{equation}
			\label{eq:log-series}
			\log(\zeta + pt) = - \sum_{k=1}^\infty \frac{(-p)^k}{k \zeta^k} t^k.
		\end{equation}
		If $N \in \bZ_{\geq 0}$ is the desired precision, let $k_0$ be the smallest integer $\geq 1$ satisfying $k_0 - \log_p(k_0) \geq N$. Then for all $k \geq k_0$, the coefficient of $t^k$ in~\eqref{eq:log-series} has valuation $\geq N$. If $\tilde \zeta \in \bZ_p$ is an approximation of $\zeta$ of order~$N$ (i.e., $v_p(\zeta - \tilde \zeta) \geq N$), then
		$-\sum_{k=1}^{k_0-1}\frac{(-p)^k}{k \tilde \zeta^k} t^k$
		is an approximation of~\eqref{eq:log-series} of order~$N$.
	\end{lemma}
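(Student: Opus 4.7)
The plan is to prove the three assertions in order: (i) the series identity, (ii) the lower bound on coefficient valuations for $k \geq k_0$, and (iii) the approximation error estimate. None of the steps is very deep; the proof is essentially an exercise in $p$-adic analysis, using only the Iwasawa convention for the $p$-adic logarithm and Taylor expansion of $\log(1+x)$.

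For the series expansion, I would first note that since $\zeta \in \bZ_p^{\times}$ is a $(p-1)$-st root of unity (hence its own Teichmüller representative), the Iwasawa logarithm satisfies $\log(\zeta) = 0$. Writing $\zeta + pt = \zeta(1 + pt/\zeta)$ and using $\log(\zeta + pt) = \log(\zeta) + \log(1 + pt/\zeta)$ together with the standard Taylor series $\log(1+x) = -\sum_{k \geq 1} (-x)^k/k$ (convergent since $|pt/\zeta|_p \leq |p|_p < 1$ for $t \in \bZ_p$) gives \eqref{eq:log-series} immediately. For part (ii), since $\zeta$ is a unit, the coefficient of $t^k$ in \eqref{eq:log-series} has $p$-adic valuation exactly $k - v_p(k)$. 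Since $v_p(k) \leq \log_p(k)$, one has $k - v_p(k) \geq k - \log_p(k)$. A quick check shows the function $k \mapsto k - \log_p(k)$ is non-decreasing on integers $k \geq 1$ (its real-variable derivative $1 - 1/(k\ln p)$ is positive for $k \geq 2$, and the values at $k = 1, 2$ agree when $p = 2$ and are increasing when $p \geq 3$). Hence once $k_0 - \log_p(k_0) \geq N$, the same holds for every $k \geq k_0$, giving $v_p(a_{1,k}) \geq N$ as claimed.

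For part (iii), I would estimate the difference between the true coefficient $-(-p)^k/(k\zeta^k)$ and its approximation $-(-p)^k/(k\tilde{\zeta}^k)$. Since $v_p(\zeta - \tilde{\zeta}) \geq N \geq 0$, $\tilde{\zeta}$ is also a unit, so both $\zeta^k$ and $\tilde{\zeta}^k$ are units and the factorization
\begin{equation*}
\tilde{\zeta}^k - \zeta^k = (\tilde{\zeta} - \zeta)\sum_{i=0}^{k-1} \tilde{\zeta}^{k-1-i} \zeta^i
\end{equation*}
yields $v_p(\zeta^{-k} - \tilde{\zeta}^{-k}) = v_p(\tilde{\zeta}^k - \zeta^k) \geq N$. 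Multiplying by $(-p)^k/k$ (whose valuation $k - v_p(k) \geq 0$ for $k \geq 1$) shows that the $k$-th coefficient error has valuation $\geq N$ for every $1 \leq k < k_0$. Combined with part (ii) for $k \geq k_0$, this is exactly the definition of a $p$-adic approximation of order $N$.

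The only step that requires a bit of care is the monotonicity of $k - \log_p(k)$ at the smallest integer values (especially $p = 2$, $k = 1, 2$), but this is easily handled by direct inspection; I do not expect any genuine obstacle.
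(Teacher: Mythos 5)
Your proof is correct and follows essentially the same route as the paper's: factor $\zeta$ out using $\log(\zeta)=0$, bound $v_p(c_k)=k-v_p(k)\geq k-\log_p(k)$ and use monotonicity, then estimate the error from replacing $\zeta$ by $\tilde\zeta$. One small simplification you missed: since $\zeta\neq 1$ is a $(p-1)$-st root of unity, the hypothesis forces $p\geq 3$, so the case analysis around $p=2$ at the end of part (ii) is unnecessary — for $p\geq 3$ the function $x\mapsto x-\log_p(x)$ has its minimum at $1/\log(p)<1$ and is therefore increasing on all of $x\geq 1$.
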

	
	\begin{proof}
		Since $\log(\zeta) = 0$ for roots of unity, we have 
		\[ \log(\zeta + pt) = \log(1 + pt/\zeta) = - \sum_{k=1}^\infty \frac{(-1)^k}{k} (pt/\zeta)^k = - \sum_{k=1}^\infty \frac{(-p)^k}{k \zeta^k} t^k \]
		as claimed. Let $c_k = - \frac{(-p)^k}{k \zeta^k}$ be the coefficient of $t^k$, then since $v_p(k) \leq \lfloor \log_p(k) \rfloor$, the valuation of $c_k$ satisfies
		$v_p(c_k) = k - v_p(k) \geq k - \log_p(k)$.
		The real-valued function $x \mapsto x - \log_p(x)$ is increasing to the right of its minimum at $x = 1/\log(p)$, in particular it is increasing in the range $x \geq 1$. (The assumptions on $\zeta$ imply $p \geq 3$.) Therefore, if the inequality $k_0 - \log_p(k_0) \geq N$ holds for some $k_0 \geq 1$, it also holds for every $k \geq k_0$. This implies that truncating the power series at the $k_0$-th term gives an approximation of order~$N$. Finally, if $\tilde \zeta$ approximates $\zeta$ to order~$N$, then 
		$1/\tilde \zeta$ approximates $1/\zeta$ to order~$N$ as well. Since $v_p((-p)^k/k) \geq 1$ for all $k \geq 1$, one can replace $\zeta$ with $\tilde \zeta$ in \eqref{eq:log-series}.
	\end{proof}
	
	For the polylogarithms $\Li_m$, consider the following computation problem:
	\begin{problem}
		\label{problem:polylog-series}
		Given a prime~$p$, a $(p-1)$-st root of unity $\zeta \neq 1$ in $\bZ_p$, a nonnegative integer~$n \in \bZ_{\geq 0}$, and a precision $N \in \bZ_{\geq 0}$, compute $p$-adic approximations of order~$N$ of the power series~\eqref{eq:polylog-series} of $\Li_m(\zeta + pt)$ for $m=0,\ldots,n$.
	\end{problem}
	
	This problem can be split into two subproblems. In order to compute the power series for $\Li_m(\zeta + pt)$ for $m =0,\ldots,n$ one first needs to compute the values $\Li_m(\zeta)$ at the root of unity~$\zeta$, in other words, the constant coefficients of the power series. The algorithm described in \cite{besser:lipservice} proceeds as follows. One considers the modified polylogarithm $\Li_m^{(p)}(z) \coloneqq \Li_m(z) - \frac1{p^m} \Li_m(z^p)$. It admits a power series expansion around~$\infty$ of the form
	$\Li_m^{(p)}(z) = g_m(1/(1- z))$
	with $g_m(v) \in \bQ[\![v]\!]$, converging for $|v|_p < p^{1/(p-1)}$. (Note that $g_m(v)$ depends on~$p$, even though this is not apparent from the notation.) For a $(p-1)$-st root of unity~$\zeta \neq 1$ one has 
	\[ \Li_m(\zeta) = \frac{p^m}{p^m-1} \Li_m^{(p)}(\zeta) = \frac{p^m}{p^m-1} g_m(1/(1-\zeta)). \]
	So we turn attention to the following problem:
	
	\begin{problem}
		\label{problem:g-series}
		Given a prime~$p$, an integer~$n \in \bZ_{\geq 0}$, and a precision $M \in \bZ_{\geq 0}$, compute $p$-adic approximations of order~$M$ of $g_m(v) \in \bQ[\![v]\!]$ for $m = 0,\ldots,n$.
	\end{problem}
	
	The series $g_m(v)$ can be computed recursively. For $m = 0$, we have
	\begin{equation}
		\label{eq:g0}
		g_0(v) = -(1-v) + (1-v)^p \sum_{i = 0}^\infty (p f(v))^i,
	\end{equation}
	where $f(v) \in v \bZ[v]$ is the polynomial of degree~$p-1$ defined by $(1-v)^p - (-v)^p = 1-pf(v)$.\footnote{Note that the formulas for $g_0(v)$ and $f(v)$ given in the proof of \cite[Prop.~6.1]{besser:lipservice} contain sign errors.} For $m \geq 1$, the power series $g_m(v)$ can be computed from $g_{m-1}(v)$ via
	\begin{equation}
		\label{eq:gm}
		g_m'(v) = -\frac{g_{m-1}(v)}{v}(1 + v + v^2 + \ldots)
	\end{equation}
	and $g_m(0) = 0$. We know estimates for the valuations of the coefficients of $g_m(v)$:
	
	\begin{lemma}{\cite[Prop.~6.1]{besser:lipservice}}
		\label{thm:g-estimates}
		For $m \geq 1$, let $g_m(v) = b_{m,1} v + b_{m,2} v^2 + \ldots$ be the power series expansion of $g_m(v) \in \bQ[\![v]\!]$. The valuations of the coefficients satisfy
		\[ v_p(b_{m,k}) \geq \max\left(0, \frac{k}{p-1} - \log_p(k) - c(m,k)\right) \]
		for some explicit constant $c(m,p) > 0$.
	\end{lemma}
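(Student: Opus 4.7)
The proof proceeds by induction on $m$, using the explicit formula \eqref{eq:g0} for the base case and the recursion \eqref{eq:gm} for the inductive step.

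For $m = 0$, observe that $f(v) \in v\bZ[v]$ has degree $p-1$, so each summand $(pf(v))^i = p^i f(v)^i$ has integer coefficients divisible by $p^i$ and is supported in degrees $i$ to $i(p-1)$. Consequently, the coefficient of $v^k$ in $\sum_{i \ge 0} (pf(v))^i$ has $p$-adic valuation at least $\lceil k/(p-1)\rceil$, since only indices $i \ge \lceil k/(p-1)\rceil$ contribute. Multiplication by $(1-v)^p \in \bZ[v]$ can only raise valuations, while the corrective term $-(1-v)$ affects only coefficients of degree $\le 1$. One obtains $v_p(b_{0,k}) \ge k/(p-1) - p/(p-1)$, which is of the required form with $c(0,p) = p/(p-1)$ (no logarithmic correction is needed at $m = 0$).

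For the inductive step, expanding $g_{m-1}(v)/(v(1-v))$ as a Cauchy product and integrating term by term (using $g_m(0) = 0$) yields the closed formula
\[ b_{m,n} = -\frac{1}{n}\sum_{k=1}^n b_{m-1,k}. \]
The factor $1/n$ contributes $-v_p(n) \ge -\log_p n$, which accounts directly for the $\log_p k$ term in the estimate. The crux is to control the partial sum $S_n \coloneqq \sum_{k=1}^n b_{m-1,k}$. The naive bound $v_p(S_n) \ge \min_k v_p(b_{m-1,k})$ is dominated by small-index terms and yields only a bounded quantity, insufficient to recover the growth rate $n/(p-1)$. Following Besser--de~Jeu, one invokes the analytic interpretation of $g_m(v)$ as a Coleman primitive of a polylogarithmic one-form, which converges on the disc $|v|_p < p^{1/(p-1)}$ and thereby enforces hidden cancellation in the partial sums, restoring a bound $v_p(S_n) \ge n/(p-1) - O(\log_p n)$. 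The bounded loss at each step is absorbed into $c(m,p) = c(m-1,p) + O(1)$, growing additively in $m$. The $\max$ with $0$ in the stated inequality serves as the trivial lower bound when the main estimate is not informative.

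The principal obstacle is precisely this partial-sum control: a purely combinatorial analysis of the recursion yields only $v_p(b_{m,n}) \ge -c - m \log_p n$, which degrades with $m$ and fails to exhibit linear growth in $n$. Overcoming this requires the rigid-analytic input on the radius of convergence of $g_m(v)$, which rests in turn on the origin of these power series in the theory of $p$-adic polylogarithms. We refer to \cite[Prop.~6.1]{besser:lipservice} for the complete argument.
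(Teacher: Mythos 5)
The paper itself does not prove this lemma: it is cited directly from Besser--de~Jeu \cite[Prop.~6.1]{besser:lipservice} with no in-paper argument, so there is no proof in the paper to compare your proposal against. Your sketch is directionally reasonable, and you correctly identify the central obstruction: from the recursion $b_{m,n} = -\tfrac1n\sum_{k=1}^n b_{m-1,k}$ (which matches the paper's Eq.~\eqref{eq:g-coeff-recursion}), the naive bound on the partial sum $S_n$ is dominated by its low-index terms and cannot recover the linear growth $n/(p-1)$. Your base-case computation for $g_0$ via~\eqref{eq:g0} is also correct.

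Where the proposal falls short is that the resolution of the difficulty is left as a gesture (``enforces hidden cancellation'') rather than a statement. The mechanism you are alluding to is, concretely, that $g_{m-1}(1)=0$ --- because $g_{m-1}(1/(1-z))$ is the modified polylogarithm $\Li_{m-1}^{(p)}(z)$, which vanishes at $z=0$, and $v=1$ lies inside the disc of convergence $|v|_p<p^{1/(p-1)}$. This turns the partial sum $S_n$ into the tail $-\sum_{k>n}b_{m-1,k}$, which the inductive hypothesis does control. Even then, each step of the induction loses an extra $\log_p$ from the $1/n$ factor, so the honest conclusion carries a term like $m\log_p k$ rather than $\log_p k$; the statement as transcribed in the paper is itself a bit loose on this point (note the mismatch $c(m,k)$ versus $c(m,p)$), and the precise exponents should be read from the original reference --- which is, in the end, also what you do. As a pointer to the cited result the proposal is fine, but it should not be mistaken for a self-contained proof: the actual engine of the estimate is the vanishing $g_{m-1}(1)=0$, and you leave that unstated.
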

	
	Here is how we solve \Cref{problem:g-series}. Given~$p$, $n$, and~$M$, we first determine $k_0 \in \bZ_{\geq 2}$ such that $\frac{k}{p-1} - \log_p(k) - c(m,p) \geq M$ for all $k \geq k_0$ and $m = 0,\ldots,n$. This is straightforward, exploiting the monotonicity properties of the real-valued function $x \mapsto \frac{x}{p-1} - \log_p(x) - c(m,p)$. Then, by \Cref{thm:g-estimates}, in order to approximate $g_m(v)$ to $p$-adic order~$M$ it suffices to compute the first $k_0$ terms. We start by computing those terms for $g_0(v)$ using Eq.~\eqref{eq:g0}. Then for $m \geq 1$, we use the recursive formula~\eqref{eq:gm}, which in terms of the coefficients $b_{m,k}$ reads
	\begin{equation}
		\label{eq:g-coeff-recursion}
		b_{m,k} = -\frac1{k}(b_{m-1,1} + \ldots + b_{m-1,k}) \quad \text{for $k \geq 1$},
	\end{equation}
	as well as $b_{m,0} = 0$. Even though all coefficients are rational numbers, it is more efficient to only store $p$-adic approximations. We can estimate how many $p$-adic digits we need: going from $m-1$ to $m$, the recursive formula~\eqref{eq:g-coeff-recursion} for $b_{m,k}$ involves a division by~$k$ which decreases the precision by~$v_p(k)$. But we are only computing coefficients with $k < k_0$, so the loss of precision can be bounded by $\delta \coloneqq \lfloor \log_p(k_0 - 1) \rfloor$. Knowing $g_0(v)$ to $p$-adic precision~$M + n\delta$ is therefore enough to subsequently compute all of $g_1(v),\ldots,g_n(v)$ to precision at least~$M$.
		
	We now return to \Cref{problem:polylog-series} asking for $p$-adic approximations of $\Li_m(\zeta + pt)$. They are computed by iterated integration, using the values $\Li_n(\zeta)$ as integration constants:
	
	\begin{lemma}
		\label{thm:polylog-series-formulas}
		The coefficients of $\Li_m(\zeta + pt) = \sum_{k = 0}^\infty a_{m,k} t^k$ are recursively given as follows. For $m = 0$ we have
		\begin{equation}
			\label{eq:Li0-series}
			\Li_0(\zeta + pt) = \frac{\zeta}{1-\zeta} + \sum_{k=1}^\infty \frac{p^k}{(1-\zeta)^{k+1}} t^k. 
		\end{equation}
		For $m \geq 1$ we have $a_{m,0} = \Li_m(\zeta) = \frac{p^m}{p^m-1} g_m(1/(1 - \zeta))$ 
		and
		\begin{equation}
			\label{eq:Lim-series}
			a_{m,k} = -\frac1{k} \sum_{j=0}^{k-1} \left(-\frac{p}{\zeta}\right)^{k-j} a_{m-1,j} \quad \text{ for $k \geq 1$}.
		\end{equation}
	\end{lemma}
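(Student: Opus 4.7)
The plan is to proceed case by case, using only the defining identity $\Li_0(z) = z/(1-z)$, the differential equation $\frac{d}{dz}\Li_m(z) = \Li_{m-1}(z)/z$ for $m \geq 1$, and the evaluation $\Li_m(\zeta) = \frac{p^m}{p^m-1}g_m(1/(1-\zeta))$ which was already recorded above (coming from $\Li_m(\zeta) = \frac{p^m}{p^m-1}\Li_m^{(p)}(\zeta)$ and the definition of $g_m$). All manipulations take place on the residue disc $U_\zeta$, where $pt/\zeta$ lies in the maximal ideal of $\bZ_p$, so every geometric series in $pt/\zeta$ converges.

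For the base case $m = 0$, I would substitute $z = \zeta + pt$ into $\Li_0(z) = z/(1-z)$ and use $1 - (\zeta + pt) = (1-\zeta)(1 - pt/(1-\zeta))$, noting that $1-\zeta \in \bZ_p^\times$ since $\zeta \neq 1$ reduces to a nonzero element mod $p$. Expanding the resulting geometric series and separating the constant term $\zeta/(1-\zeta)$ from the positive-degree terms yields the claimed formula \eqref{eq:Li0-series}.

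For $m \geq 1$, the constant coefficient $a_{m,0}$ is just $\Li_m(\zeta)$, which equals $\frac{p^m}{p^m-1}g_m(1/(1-\zeta))$ by the discussion preceding \Cref{problem:g-series}. To obtain the recursion for $k \geq 1$, I would differentiate both sides of $\Li_m(\zeta+pt) = \sum_{k \geq 0} a_{m,k} t^k$ with respect to $t$. Using the chain rule and the polylogarithm differential equation gives
\[ \sum_{k \geq 1} k\, a_{m,k}\, t^{k-1} = \frac{p\,\Li_{m-1}(\zeta+pt)}{\zeta+pt}. \]
Expand $\frac{1}{\zeta+pt} = \frac{1}{\zeta}\sum_{i \geq 0}(-pt/\zeta)^i$ (again using that $\zeta$ is a unit), multiply by the known series for $\Li_{m-1}(\zeta+pt)$, collect the coefficient of $t^{k-1}$, and divide by $k$. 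A short sign bookkeeping — rewriting $\frac{p(-p)^{k-1-j}}{\zeta^{k-j}}$ as $-(-p/\zeta)^{k-j}$ — turns the resulting convolution into the displayed formula \eqref{eq:Lim-series}.

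There is no real obstacle here; the argument is essentially the classical derivation of Taylor coefficients from a differential equation together with the previously computed integration constant. The only thing to watch is the sign and index shift in the geometric-series expansion of $p/(\zeta+pt)$, which is where all three factors of $-1$, $p$, and $\zeta$ come from in the final formula.
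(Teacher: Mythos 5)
Your proposal is correct and follows exactly the approach implicit in the paper's proof: start from $\Li_0(z)=z/(1-z)$ and the differential equation $\Li_m'(z)=\Li_{m-1}(z)/z$, substitute $z=\zeta+pt$, expand the geometric series, and match coefficients of $t^{k-1}$. The sign bookkeeping $\tfrac{p}{\zeta}\bigl(-\tfrac{p}{\zeta}\bigr)^{k-1-j} = -\bigl(-\tfrac{p}{\zeta}\bigr)^{k-j}$ is correctly identified as the only point requiring care, and it goes through as you describe.
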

	
	\begin{proof}
		We have $\Li_0(z) = \frac{z}{1-z}$ and $\rd\!\Li_m(z) = \Li_{m-1} \frac{\rd z}{z}$ for $m \geq 1$ by definition of the polylogarithms as iterated Coleman integrals. An easy calculation shows that this translates into the given formulas for the power series coefficients.
	\end{proof}
	
	\begin{lemma}
		\label{thm:polylog-series-estimates}
		The valuations of the coefficients $a_{m,k}$ satisfy $v_p(a_{m,0}) \geq m$ and $v_p(a_{m,k}) \geq k - m \lfloor \log_p(k) \rfloor$ for $k \geq 1$. 
	\end{lemma}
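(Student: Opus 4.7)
The plan is to handle the two bounds separately: the constant term first, then the higher coefficients by induction on $m$.

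For the constant term $a_{m,0} = \Li_m(\zeta)$, I would use the formula $\Li_m(\zeta) = \frac{p^m}{p^m - 1}\, g_m(1/(1-\zeta))$. Since $\zeta$ is a $(p-1)$-st root of unity different from $1$, the element $1-\zeta$ is a $p$-adic unit, so $1/(1-\zeta) \in \bZ_p$. By \Cref{thm:g-estimates}, all coefficients of $g_m(v)$ lie in $\bZ_p$, whence $g_m(1/(1-\zeta)) \in \bZ_p$. The prefactor $p^m/(p^m - 1)$ has $p$-adic valuation exactly $m$ (the denominator being a unit), so $v_p(a_{m,0}) \geq m$ as required.

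For the higher coefficients I would proceed by induction on $m$. The base case $m = 0$ follows immediately from Eq.~\eqref{eq:Li0-series}: one has $a_{0,k} = p^k/(1-\zeta)^{k+1}$ for $k \geq 1$, which has valuation exactly $k$, matching the claimed bound $k - 0 \cdot \lfloor\log_p(k)\rfloor$. For the inductive step, apply the recursion~\eqref{eq:Lim-series} together with the ultrametric triangle inequality:
\[
v_p(a_{m,k}) \;\geq\; -v_p(k) + \min_{0 \leq j \leq k-1}\!\Bigl[(k-j) + v_p(a_{m-1,j})\Bigr].
\]
Split the minimum into the term $j = 0$, where the already-proven bound gives $(k-0) + v_p(a_{m-1,0}) \geq k + (m-1)$, and the terms $j \geq 1$, where the inductive hypothesis gives
\[
(k-j) + v_p(a_{m-1,j}) \;\geq\; (k-j) + j - (m-1)\lfloor\log_p(j)\rfloor \;\geq\; k - (m-1)\lfloor\log_p(k)\rfloor,
\]
using monotonicity of $\log_p$ and $j \leq k$. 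Combining both cases yields $v_p(a_{m,k}) \geq k - v_p(k) - (m-1)\lfloor\log_p(k)\rfloor$.

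The final step, which is the only place where a small arithmetic observation is needed, is the elementary inequality $v_p(k) \leq \lfloor\log_p(k)\rfloor$ valid for every $k \geq 1$ (since $p^{v_p(k)} \mid k$ forces $p^{v_p(k)} \leq k$). Subtracting one unit of $\lfloor\log_p(k)\rfloor$ to absorb the $v_p(k)$ term converts the bound into $k - m\lfloor\log_p(k)\rfloor$, completing the induction. There is no real obstacle here; the argument is a routine combination of the recursive formulas in \Cref{thm:polylog-series-formulas} with valuation estimates, and the only subtlety to keep track of is that the $j = 0$ contribution, which carries the factor from $a_{m-1,0}$, is dominated by the stronger $j \geq 1$ bound.
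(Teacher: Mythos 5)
Your proposal is correct and follows essentially the same route as the paper's proof: treat $a_{m,0}$ directly via the $g_m$ formula and the unit prefactor $p^m/(p^m-1)$, then induct on $m$ using the recursion \eqref{eq:Lim-series}, isolating the $j=0$ term and bounding the $j\geq 1$ terms by the inductive hypothesis together with $v_p(k)\leq\lfloor\log_p(k)\rfloor$. The only cosmetic difference is that you defer the step $v_p(k)\leq\lfloor\log_p(k)\rfloor$ to the end, while the paper folds it into the chain of inequalities term-by-term; the content is identical.
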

	
	\begin{proof}
		For $k = 0$, it follows from $a_{m,0} = \Li_m(\zeta) = \frac{p^m}{p^m-1} g_m(1/(1 - \zeta))$ and the fact that the coefficients of $g_m(v)$ have valuation $\geq 0$ that $a_{m,0}$ has valuation~$\geq m$. For $m = 0$, we see from Eq.~\eqref{eq:Li0-series} that the coefficient of $t^k$ has valuation exactly~$k$. Let $m \geq 1$ and assume the hypothesis for $m-1$. In the recursive formula~\eqref{eq:Lim-series} for computing the $a_{m,k}$ from the $a_{m-1,j}$ with $j < k$, the summands with $j \geq 1$ satisfy
		\begin{align*} 
			v_p\left(-\frac1{k} \left(-\frac{p}{\zeta}\right)^{k-j} a_{m-1,j}\right) &= k-j - v_p(k) + v_p(a_{m-1,j})\\
			&\geq k-j - v_p(k) + j - (m-1) \lfloor \log_p(j) \rfloor \\
			&\geq k - m \lfloor \log_p(k) \rfloor.
		\end{align*}
		The summand for $j = 0$ also satisfies this since $v_p(a_{m-1,0}) \geq m-1$.
		Hence, from Eq.~\eqref{eq:Lim-series} we find $v_p(a_{m,k}) \geq k - m \lfloor \log_p(k) \rfloor$ as claimed. 
%
	\end{proof}
	
	In order to compute $p$-adic approximations of order~$N$ to the power series of $\Li_m(\zeta + pt)$ for $m = 0,\ldots,n$, we first determine $k_0 \geq 1$ such that $k - n \lfloor \log_p(k)\rfloor \geq N$ for all $k \geq k_0$. 
	Then, by \Cref{thm:polylog-series-estimates}, it suffices to find order~$N$ approximations of the coefficients $a_{m,k}$ with $k < k_0$. In the recursive formula~\eqref{eq:Lim-series} for $a_{m,k}$, the division by $k$ causes a loss of precision by $v_p(k) \leq \lfloor \log_p(k_0 - 1) \rfloor$. On the other hand, each $a_{m-1,j}$ is multiplied by at least one factor of~$p$. Thus, in order to compute $a_{m,k}$ to precision~$N$ it suffices to compute the $a_{m-1,j}$ to precision~$N + \delta - 1$, where 
	\[ \delta \coloneqq \lfloor \log_p(k_0 - 1) \rfloor.\]
	This tells us two things: firstly, setting
	\[ M \coloneqq \max(N, N + n(\delta-1)), \]
	it suffices to compute the power series $g_m(v)$ for $m = 0,\ldots,n$ to precision~$M$. This is achieved by the algorithm for \Cref{problem:g-series} above. Secondly, we get an estimate on how many $p$-digits we need to store for each coefficient.
	
	\begin{lemma}
		\label{thm:digits-estimate}
		In order to compute $a_{m,k}$ for $k < k_0$ and $m = 0,\ldots,n$ with precision~$N$, no more than $M$ $p$-adic digits are needed for each coefficient.
	\end{lemma}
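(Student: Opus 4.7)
The plan is to track precision propagation through the recursion~\eqref{eq:Lim-series} one step at a time and then iterate over the $n$ levels. Fix $m \geq 1$ and $1 \leq k < k_0$. In the formula for $a_{m,k}$, each factor $\left(-p/\zeta\right)^{k-j}$ with $j < k$ has valuation exactly $k - j \geq 1$ (as $\zeta$ is a unit), so the multiplications gain at least one digit of precision, while the final division by~$k$ costs $v_p(k) \leq \lfloor \log_p(k_0 - 1) \rfloor = \delta$ digits. Thus, if the inputs $a_{m-1,j}$ for $j < k$ are all known to precision $P$, the value of $a_{m,k}$ computed via~\eqref{eq:Lim-series} is accurate to precision at least $P - (\delta - 1)$; the per-step precision loss is bounded by $\max(\delta - 1, 0)$, with precision actually \emph{gained} when $\delta = 0$.

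It then remains to iterate this bound by induction on $m$. At level $m = 0$ the coefficients $a_{0,k}$ for $k \geq 1$ are available in closed form from~\eqref{eq:Li0-series}, and each constant term $a_{m,0} = \frac{p^{m}}{p^{m}-1}\, g_{m}(1/(1-\zeta))$ is obtained from the series $g_m(v)$, which the algorithm solving \Cref{problem:g-series} delivers to $p$-adic precision~$M$ by construction. Storing these base values to $M$ digits and applying the one-step bound $n$ times shows that every $a_{m,k}$ with $m \leq n$ and $k < k_0$ ends up known to precision at least $M - n\max(\delta - 1, 0)$, which equals $N$ when $\delta \geq 1$ and stays at $M = N$ when $\delta = 0$. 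In either case, storing $M$ $p$-adic digits per coefficient suffices, as required.

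The only subtle point I anticipate is ensuring that the per-step estimate is uniform in $k$: one needs $v_p(k) \leq \delta$ for \emph{all} $k < k_0$, which is immediate from the definition of $\delta$, and the valuation gain of at least one digit from $\left(-p/\zeta\right)^{k-j}$ must hold for every $j < k$ (which it does, since $k - j \geq 1$). Once these uniformities are in place, the induction on $m$ proceeds mechanically, and no intermediate quantity in the computation requires carrying more than $M$ $p$-adic digits.
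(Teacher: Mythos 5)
Your per-step precision analysis is correct and matches the paper: in the recursion~\eqref{eq:Lim-series}, the factor $(-p/\zeta)^{k-j}$ raises absolute $p$-adic precision by $k-j\ge 1$, while division by $k$ lowers it by $v_p(k)\le\delta$, giving a net loss of at most $\delta-1$ per level; iterating $n$ times from $M$ yields absolute precision $\ge N$ at level $n$. However, the final step — ``storing $M$ $p$-adic digits per coefficient suffices'' — does not follow from what you have established. The number of $p$-adic digits one must store for a coefficient is its absolute precision minus its valuation, and when $\delta\ge 2$ the intermediate coefficients $a_{m,k}$ need not remain in $\bZ_p$: the same division by $k$ that costs absolute precision also lowers the valuation. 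Your argument tracks only the absolute precision and tacitly assumes the coefficients stay $p$-integral, so the digit bound is not actually proved.

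This is exactly the point the paper's proof handles explicitly. It observes that the valuations, like the absolute precisions, decrease by at most $\delta-1$ per step, so that at level $m$ one has absolute precision $\ge M - m(\delta-1)$ \emph{and} valuation $\ge -m(\delta-1)$; the two effects cancel and the number of stored digits stays bounded by $(M - m(\delta-1)) - (-m(\delta-1)) = M$ throughout. To repair your proof, you should add the parallel valuation estimate: since $v_p\!\left((-p/\zeta)^{k-j}a_{m-1,j}/k\right) \ge v_p(a_{m-1,j}) + 1 - \delta$, an easy induction gives $v_p(a_{m,k})\ge -m(\delta-1)$, and combining this with your precision bound yields the claimed digit count.
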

	
	\begin{proof}
		If $\delta \leq 1$, no precision is lost and no negative valuations occur when going from $m-1$ to $m$, so knowing the first $N$ ($=M$) $p$-adic digits of the coefficients $a_{0,k} \in \bZ_p$ is enough to compute the first $N$ digits of $a_{m,k} \in \bZ_p$ for all $m = 1,\ldots,n$. If $\delta \geq 2$ on the other hand, we compute the $a_{0,k} \in \bZ_p$ to precision $M = N + n(\delta-1)$. Subsequently, each step from $m-1$ to $m$ decreases both the precision and the valuations by up to $\delta-1$, so that we end at $m = n$ with precision $N$ and valuation $\geq -n(\delta-1)$, still requiring only~$M$ digits.
	\end{proof}
	
	The algorithms outlined above for solving Problems \ref{problem:polylog-series} and \ref{problem:g-series} are implemented in the functions \texttt{compute\_g} and \texttt{compute\_polylog\_series} of the accompanying Sage code \cite{sage_code}. 
	

	\subsection{Finding roots of power series}
	
	Consider the following problem.
	
	\begin{problem}
		\label{problem:root-finding}
		Given a $p$-adic approximation of order~$N$ of a power series $f(t) \in \bQ_p[\![t]\!]$ which converges on~$\bZ_p$, determine the set of roots of $f$ in $\bZ_p$.
	\end{problem}
	
	After rescaling by a power of~$p$, one can assume that the power series has coefficients in $\bZ_p$ and has nonzero reduction mod~$p$. Then, in principle, finding the roots of~$f(t)$ is achieved by Hensel lifting. There are however some subtleties to take into account if we are only given a $p$-adic approximation of $f(t)$. 
	\begin{enumerate}
		\item Knowing $f(t)$ to precision~$N$ might not be enough to decide whether a root modulo $p^N$ lifts to a root in~$\bZ_p$. For example, the polynomials $f_1(t) = t^2 - 1$ and $f_2(t) = t^2 - 5$ over $\bZ_2$ agree modulo $4$ but the roots $\pm 1$ in $\bZ/4\bZ$ only lift to roots in $\bZ_2$ of $f_1$, not of~$f_2$. 
		\item Even if the roots of $f(t)$ modulo~$p^N$ lift to roots in $\bZ_p$, those roots might be determined only up to a lower precision. This happens if the root is not a simple root modulo~$p$. For example, $f_1(t) = t^2 - 1$ and $f_2(t) = t^2-9$ agree modulo~$8$ but their zero sets $\{1,-1\}$ and $\{-3,3\}$ agree only modulo~$4$. 
	\end{enumerate}
	
	The function \texttt{polrootspadic} of PARI/GP (which can also be called from Sage) unfortunately does not take the aforementioned issues with inexact coefficients into account. Therefore, we implemented a function \texttt{Zproots} ourselves, also available at \url{https://github.com/martinluedtke/RefinedCK}, which solves \Cref{problem:root-finding} while taking care of precision questions. For example, the Sage code
	\begin{minted}[frame=lines,baselinestretch=1.2,bgcolor=gray!8,fontsize=\footnotesize]{python}
K = Qp(2,prec=2)
R.<t> = K['t']
Zproots(t^2-1)  # => PrecisionError
	\end{minted}
	results in a \texttt{PrecisionError} since the precision~$2$ is not enough to decide whether the roots $\pm 1$ in $\bZ/4\bZ$ lift to $\bZ_2$. On the other hand, increasing the precision to~$3$, 
	\begin{minted}[frame=lines,baselinestretch=1.2,bgcolor=gray!8,fontsize=\footnotesize]{python}
K = Qp(2,prec=3)
R.<t> = K['t']
Zproots(t^2-1)  # => [1 + O(2^2), 1 + 2 + O(2^2)]
	\end{minted}
	correctly finds $\{1 + O(2^2), 3 + O(2^2)\}$ as the set of roots in $\bZ_2$, those roots being determined modulo~$4$.
	
	The precise version of Hensel's Lemma being used is the following:
	
	\begin{lemma}[{\cite[Theorem~8.2]{konrad:hensel}}]
		\label{thm:hensel}
		Let $f(t) \in \bZ_p[\![t]\!]$ be a power series which converges on $\bZ_p$. Let $a \in \bZ_p$ and set $d \coloneqq v_p(f'(a))$. If $d < v_p(f(a))/2$, then there is a unique $\alpha \in a + p^{d+1} \bZ_p$ such that $f(\alpha) = 0$. Moreover, $v_p(\alpha - a) = v_p(f(a)) - d$.
	\end{lemma}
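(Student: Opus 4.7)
The plan is to prove this via Newton iteration, which is the standard route. The key algebraic input is a Taylor expansion: since $f(t) = \sum_{k \geq 0} c_k t^k \in \bZ_p[\![t]\!]$ converges on $\bZ_p$, for any $a \in \bZ_p$ one can rewrite
\[ f(a + y) = f(a) + f'(a)\, y + y^2 Q_a(y), \]
where $Q_a(y) \in \bZ_p[\![y]\!]$ is a power series converging on $\bZ_p$ (its coefficients are $\bZ_p$-linear combinations of the $c_k$, since they are the higher Taylor coefficients $f^{(k)}(a)/k!$, each expressible as $\sum_{j \geq k} c_j \binom{j}{k} a^{j-k}$). I would prove this auxiliary statement first, along with the analogous expansion $f'(a+y) = f'(a) + y\, R_a(y)$ with $R_a \in \bZ_p[\![y]\!]$ converging on $\bZ_p$. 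These two formulas are the only analytic facts needed; everything afterwards is valuation bookkeeping.

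Next I would set up the iteration $a_0 = a$ and $a_{n+1} = a_n - f(a_n)/f'(a_n)$, and prove by induction on $n$ the following invariants: (i) $v_p(f'(a_n)) = d$, so the quotient is defined; (ii) $v_p(a_{n+1} - a_n) = v_p(f(a_n)) - d$; (iii) $v_p(f(a_{n+1})) \geq 2 v_p(f(a_n)) - 2d$. Setting $v = v_p(f(a))$, the recursion (iii) with the hypothesis $v > 2d$ yields $v_p(f(a_n)) \geq 2^n(v-2d) + 2d \to \infty$, so $v_p(a_{n+1} - a_n) \to \infty$ and $(a_n)$ is Cauchy. The inductive step uses the two Taylor expansions: plugging $y = a_{n+1} - a_n = -f(a_n)/f'(a_n)$ into the expansion of $f$ around $a_n$ cancels the first two terms and leaves $f(a_{n+1}) = y^2 Q_{a_n}(y)$, which gives (iii); the expansion of $f'$ around $a_n$ together with $v_p(y) = v_p(f(a_n)) - d > d$ shows $f'(a_{n+1}) \equiv f'(a_n) \pmod{p^{d+1}}$, giving (i). The limit $\alpha = \lim a_n$ lies in $a + p^{d+1}\bZ_p$ (the first step already has valuation $v-d \geq d+1$, and all later steps have higher valuation, so the telescoping sum gives $v_p(\alpha - a) = v - d$), and continuity forces $f(\alpha) = 0$.

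For uniqueness, suppose $\beta \in a + p^{d+1}\bZ_p$ satisfies $f(\beta) = 0$. Applying the Taylor expansion around $\alpha$ with $y = \beta - \alpha$,
\[ 0 = f(\beta) = f'(\alpha)(\beta - \alpha) + (\beta - \alpha)^2 Q_\alpha(\beta - \alpha) = (\beta - \alpha)\bigl(f'(\alpha) + (\beta - \alpha)\, Q_\alpha(\beta - \alpha)\bigr). \]
Now $v_p(f'(\alpha)) = d$ (this follows from invariant (i) by continuity, or directly from the $f'$-expansion around $a$), whereas $v_p\bigl((\beta - \alpha) Q_\alpha(\beta - \alpha)\bigr) \geq v_p(\beta - \alpha) \geq d+1$. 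Hence the second factor has valuation exactly $d$ and is nonzero, forcing $\beta = \alpha$.

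The only genuine subtlety is the Taylor expansion for a power series as opposed to a polynomial, which is why I would handle it as a separate preliminary; once it is in place, the iteration estimates and the uniqueness argument are entirely formal. Everything else is a careful tracking of $p$-adic valuations, so no step is really difficult, but one must resist the temptation to conflate $v_p(f(a))/2$ with a strict inequality only in characteristic zero — the hypothesis $d < v/2$ is used in the precise form $v - d > d$, both to guarantee that each iterate stays in the disc $a + p^{d+1}\bZ_p$ and to make the quadratic convergence estimate (iii) imply $v_p(f(a_n)) \to \infty$.
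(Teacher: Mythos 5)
Your argument is correct and is the standard Newton-iteration proof, which is also the route taken in the cited reference (\cite[Theorem~8.2]{konrad:hensel}); the paper itself gives no proof, only the citation, so your write-up simply supplies the argument being referenced. The Taylor-expansion preliminary $f(a+y)=f(a)+f'(a)y+y^2Q_a(y)$ with $Q_a\in\bZ_p[\![y]\!]$ convergent on $\bZ_p$ is exactly the right thing to isolate, and the invariants (i)--(iii), the Cauchy estimate $v_p(f(a_n))\geq 2^n(v-2d)+2d$, the telescoping computation of $v_p(\alpha-a)$, and the factorisation argument for uniqueness are all sound.
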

	
	From this we obtain the following proposition which says exactly to which precision the roots of an inexact power series can be known.
	
	\begin{prop}
		\label{thm:inexact-roots}
		Let $f(t)$ and $\tilde f(t)$ be two power series in $\bZ_p[\![t]\!]$ which converge on $\bZ_p$ and satisfy $f \equiv \tilde f \bmod p^N \bZ_p[\![t]\!]$. Let $a \in \bZ_p$ and set $\tilde d \coloneqq v_p(\tilde f'(a))$. If $\tilde d < N/2$ and $\tilde d < v_p(\tilde f(a))/2$, then $f$ and $\tilde f$ each have a unique root $\alpha$ resp. $\tilde \alpha$ in $a + p^{\tilde d + 1}\bZ_p$, and the roots satisfy $v_p(\alpha - \tilde \alpha) \geq N-\tilde d$.
	\end{prop}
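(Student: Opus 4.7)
The plan is to apply the cited Hensel's Lemma (Lemma~4.11) three times and use its uniqueness clause to identify roots across the two series.

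First, I would establish existence of $\tilde\alpha$ and $\alpha$. The existence of $\tilde\alpha \in a + p^{\tilde d + 1}\bZ_p$ with $\tilde f(\tilde\alpha) = 0$ is immediate from Hensel's Lemma applied to $\tilde f$ at $a$, using the hypothesis $\tilde d < v_p(\tilde f(a))/2$. For $f$, the key observation is that coefficient-wise congruence mod $p^N$ implies $f(a) \equiv \tilde f(a) \bmod p^N$ and $f'(a) \equiv \tilde f'(a) \bmod p^N$. Since $\tilde d < N/2 \leq N$, this forces $v_p(f'(a)) = v_p(\tilde f'(a)) = \tilde d =: d$, and since both $v_p(\tilde f(a)) > 2\tilde d$ and $N > 2\tilde d$ hold, we get $v_p(f(a)) \geq \min(v_p(\tilde f(a)), N) > 2d$. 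Hensel then produces a unique $\alpha \in a + p^{d+1}\bZ_p$ with $f(\alpha) = 0$.

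For the precision estimate $v_p(\alpha - \tilde\alpha) \geq N - \tilde d$, the idea is to apply Hensel to $\tilde f$ a second time, now centered at $\alpha$. Since $f - \tilde f \in p^N \bZ_p[\![t]\!]$ and $\alpha \in \bZ_p$, we have $\tilde f(\alpha) = \tilde f(\alpha) - f(\alpha) \in p^N \bZ_p$, so $v_p(\tilde f(\alpha)) \geq N$. Moreover, as a power series identity $\tilde f'(\alpha) \equiv \tilde f'(a) \bmod p^{d+1}\bZ_p$ (for $\alpha \in a + p^{d+1}\bZ_p$), and since $\tilde d < d + 1$, this gives $v_p(\tilde f'(\alpha)) = \tilde d$. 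The hypothesis $\tilde d < N/2 \leq v_p(\tilde f(\alpha))/2$ lets us apply Hensel's Lemma to $\tilde f$ at $\alpha$, producing a unique root $\tilde\alpha'$ of $\tilde f$ inside $\alpha + p^{\tilde d + 1}\bZ_p$, with $v_p(\tilde\alpha' - \alpha) = v_p(\tilde f(\alpha)) - \tilde d \geq N - \tilde d$.

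Finally, I would invoke uniqueness to conclude $\tilde\alpha = \tilde\alpha'$. Both $\alpha$ and $\tilde\alpha$ lie in $a + p^{\tilde d + 1}\bZ_p$ (using $d = \tilde d$), hence $\tilde\alpha \in \alpha + p^{\tilde d + 1}\bZ_p$; by the uniqueness clause in Hensel for $\tilde f$ at $\alpha$, this forces $\tilde\alpha = \tilde\alpha'$, yielding the desired bound $v_p(\alpha - \tilde\alpha) \geq N - \tilde d$. The only subtle point is the consistent bookkeeping that $d = \tilde d$ and that $\tilde d < N/2$ is used twice — once to transfer Hensel's hypothesis from $\tilde f$ to $f$, and once to re-apply Hensel to $\tilde f$ at the new base point $\alpha$ — but no deeper obstruction arises.
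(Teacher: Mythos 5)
Your proof is correct and takes essentially the same approach as the paper: both establish $v_p(f'(a)) = \tilde d$ and $v_p(f(a)) > 2\tilde d$ from the mod-$p^N$ congruence, apply Hensel's Lemma at $a$ to each series, and then reapply Hensel at one of the resulting roots to identify it with the other via the uniqueness clause and read off the distance estimate. The only (purely cosmetic) difference is that the paper reapplies Hensel to $f$ centered at $\tilde\alpha$, whereas you reapply Hensel to $\tilde f$ centered at $\alpha$; this is a symmetric choice with no mathematical content.
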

	
	\begin{proof}
		Since $f \equiv \tilde f \bmod  p^N \bZ_p[\![t]\!]$, also $f' \equiv \tilde f' \bmod  p^N \bZ_p[\![t]\!]$ and thus $f'(a) \equiv \tilde f'(a) \bmod p^N$. The valuation $\tilde d$ of $\tilde f'(a)$ is smaller than $N/2 < N$ by assumption, so we get $d\coloneqq v_p(f'(a)) = v_p(\tilde f'(a)) = \tilde d$. Moreover, we have
		\begin{align*}
			v_p(f(a)) &\geq \min(v_p(f(a) - \tilde f(a)), v_p(\tilde f(a))) \geq \min(N, v_p(\tilde f(a))) > 2 \tilde d = 2d,
		\end{align*}
		where both assumptions on~$\tilde d$ are used in the strict inequality. Thus, by \Cref{thm:hensel}, $f$ and $\tilde f$ both have a unique root $\alpha$ resp.\ $\tilde \alpha$ in $a + p^{\tilde d +1}\bZ_p$, proving the first part of the proposition. We now verify the hypotheses of \Cref{thm:hensel} for~$f$ again but with $\tilde \alpha$ in place of~$a$. Since $\tilde \alpha \equiv a \bmod p^{\tilde d+1}$, also $f'(\tilde \alpha) \equiv f'(a) \bmod p^{\tilde d+1}$, thus $v_p(f'(\tilde \alpha)) = v_p(f'(a)) = \tilde d$. Also, we have $f(\tilde \alpha) \equiv \tilde f(\tilde \alpha) = 0 \bmod p^N$, hence $v_p(f(\tilde \alpha)) \geq N > 2 \tilde d$. Now, by \Cref{thm:hensel}, $f$ has a unique root in $\tilde \alpha + p^{d+1} \bZ_p$. But this root must necessarily be $\alpha$. Now the ``moreover'' statement of \Cref{thm:hensel} yields $v_p(\alpha - \tilde \alpha) = v_p(f(\tilde \alpha)) - \tilde d \geq N - \tilde d$.
	\end{proof}
	
	Based on \Cref{thm:inexact-roots}, we obtain an algorithm to resolve \Cref{problem:root-finding}. Assume that a power series $f(t) \in \bZ_p[\![t]\!]$ is given to $p$-adic precision~$N$. Suppose we want to find the roots of $f$ in a $p$-adic disc $a + p^m \bZ_p$ for some $a \in \bZ_p$ and $m \geq 0$, and suppose we know that
	\begin{equation}
		\label{eq:hensel-induction}
		v_p(f'(a)) \geq m-1, \quad v_p(f(a)) \geq 2(m-1).
	\end{equation}
	(At the beginning of the algorithm we take $a = 0$ and $m = 0$ to search in all of $\bZ_p$.) We assume also that $N \geq 2m-1$.  For any $a + p^mb$ in $a + p^m \bZ_p$, using $v_p(f'(a)) \geq m-1$ we have
	\[ f(a + p^m b) \equiv f(a) + p^m b f'(a) \equiv f(a) \bmod p^{2m-1}, \]
	therefore $f(a) \equiv 0 \bmod p^{2m-1}$ is a necessary condition for the existence of roots in $a + p^m \bZ_p$. Since $f$ is known to precision $N$ and we have $N \geq 2m-1$, we can check whether this condition is satisfied. Assume that this is the case. Then we check whether $d \coloneqq v_p(f'(a))$ is equal to $m-1$. If yes, then by Hensel's Lemma, there is a unique root in $a + p^m\bZ_p$, and by \Cref{thm:inexact-roots}, knowing $f$ to precision~$N$ determines this root to precision $N-d$. It is computed by Newton iteration, starting with the value~$a$. If $d \geq m$ on the other hand, the congruence $f(a + p^m b) \equiv f(a)$ holds even modulo $p^{2m}$, so if $f(a) \not \equiv 0 \bmod p^{2m}$ (which requires $N \geq 2m$ to check), then we can conclude that there are no roots in $a + p^m \bZ_p$. Otherwise, we continue to search in the smaller discs $a + ip^m + p^{m+1}\bZ_p$ for $i=0,\ldots,p-1$, which form a partition of $a + p^m \bZ_p$. The conditions~\eqref{eq:hensel-induction} are satisfied for each of these smaller discs where $a$ is replaced by $a + ip^m$ and $m$ is replaced by $m+1$. Assuming that $N \geq 2m+1$, we can proceed as before for each of them. Overall, this leads to the search space $\bZ_p$ being explored in a tree-like manner, recursively subdividing into smaller and smaller discs until we can either decide that they don't contain a root, that they contain exactly one root, or they have become too small relative to the precision~$N$ to detect the roots, in which case the algorithm will fail with a \texttt{PrecisionError}. 
	
	\begin{rem}
		\label{rmk:double-root}
		In finite precision, a power series with a repeated root in $\bZ_p$ cannot be distinguished from a power series which has two roots lying very close to each other. The algorithm will always raise a \texttt{PrecisionError} in this case. If on the other hand $f$ has only simple roots in $\bZ_p$, the outlined algorithm will be able to determine those roots when $f$ is specified with sufficiently large precision. 
	\end{rem}

	\section{Analysis of depth 2 loci}
	\label{sec: sizes}
	
	Using the methods described in §\ref{sec:depth2-loci}, we have computed the Chabauty--Kim loci $X(\bZ_p)_{\{2,q\},2}^{(1,0)}$ for many $p$ and $q$. Complete data can be found on GitHub \cite{sage_code}.
	
	\subsection{Observations}
	
	Taking $q = 3$, \Cref{tbl:depth2-3} shows the size of $X(\bZ_p)_{\{2,3\},2}^{(1,0)}$ for a few choices of auxiliary prime~$p$.
	\begin{table}
	\begin{center}
		\begin{tabular}{c||c|c|c|c|c|c|c|c|c|c|c|c|c|c}
			\hline
			$p$ & 5 & 7 & 11 & 13 & 17 & 19 & 23 & 29 & 31 & \ldots & 1091 & 1093 & 1097 & \ldots \\
			\hline
			$\#X(\bZ_p)_{\{2,3\},2}^{(1,0)}$ & 6 & 8 & 18 & 16 & 22 & 20 & 20 & 26 & 36 & \ldots & 1076 & 2154 & 1078 & \ldots \\
			\hline
		\end{tabular}
		\caption{Sizes of the depth~2 Chabauty--Kim loci $X(\bZ_p)_{\{2,3\},2}^{(1,0)}$ for various choices of auxiliary prime~$p$}
		\label{tbl:depth2-3}
	\end{center}
	\end{table}
	We have computed these loci for all primes $p < 5000$ and made the following observations:
	\begin{enumerate}
		\item The size of the locus is even in each case.
		\item Each residue disc of $X(\bZ_p)$ contains either $0$ or $2$ points of $X(\bZ_p)_{\{2,3\},2}^{(1,0)}$.
		\item For most primes, the locus is roughly of size~$p$, so the $p-2$ residue discs are split more or less evenly between containing two points and containing no points of the locus.
		\item When $p$ is equal to one of the two known Wieferich primes $1093$ and $3511$, the locus is of size~$\approx 2p$. For $p=1093$ only $14$ residue discs contain no points. These are stable under the $S_3$-action and include the residue discs of $i = \sqrt{-1}$ and the primitive $6$-th roots of unity~$\zeta_6$. For $p = 3511$, only~$2$ residue discs contain no points of the locus, namely those of $\zeta_6$ and $\zeta_6^{-1}$.
	\end{enumerate}
	
	Similar observations hold for primes~$q$ other than~$3$. We have computed the size of the depth~2 loci $X(\bZ_p)_{\{2,q\},2}^{(1,0)}$ for all $q < 100$ and $p < 1000$; an excerpt is shown in \Cref{tbl:depth2-q}. 
	\begin{table}
		\begin{center}
			\begin{tabular}{c||c|c|c|c|c|c|c|c|c|c|c|c|c|c}
				\hline
				$p$ & 3 & 5 & 7 & 11 & 13 & 17 & 19 & 23 & 29 & 31 & 37 & 41 & 43 & 47 \\
				\hline
				$\#X(\bZ_p)_{\{2,5\},2}^{(1,0)}$ & 3 & -- & 6 & 10 & 8 & 12 & 18 & 24 & 38 & 28 & 36 & 34 & 50 & 44 \\
				$\#X(\bZ_p)_{\{2,7\},2}^{(1,0)}$ & 3 & 6 & -- & 10 & 12 & 15 & 18 & 22 & 32 &
				 38 & 36 & 44 & 38 & 44 \\
				 $\#X(\bZ_p)_{\{2,11\},2}^{(1,0)}$ & 3 & 6 & 6 & -- & 8 & 14 & 18 & 18 & 38 & 28 & 40 & 34 & 36 & 48 \\
				 $\#X(\bZ_p)_{\{2,19\},2}^{(1,0)}$ & 2 & 4 & 8 & 8 & 18 & 20 & -- & 20 & 30 & 26 & 36 & 44 & 78 & 44 \\
				 
				 \hline
			\end{tabular}
			\caption{Sizes of the depth~2 Chabauty--Kim loci $X(\bZ_p)_{\{2,q\},2}^{(1,0)}$ for $q = 5, 7, 11, 19$ and $p < 50$}
			\label{tbl:depth2-q}
		\end{center}
	\end{table}
	For $p = 3$, the locus always has size~$2$ or $3$. This is a general fact proved in \cite[Prop.~3.14]{BBKLMQSX}. For $p \geq 5$, most of the time the size of the locus is even, but it can occasionally be odd; for example the size is~$15$ for $p = 17$ and $q = 7$. Also, the size is usually close to~$p$, but in a few cases it is close to~$2p$. The latter always occurs if $p$ is one of the known Wieferich primes $1093$ and $3511$, but occasionally it happens for non-Wieferich~$p$ as well; for example, \Cref{tbl:depth2-q} shows that the locus for $q = 19$ and $p = 43$ has size~$78$. 
	
	\subsection{Newton polygon analysis}
	
	We can explain many of these observations by analysing the Newton polygons of the power series defining the Chabauty--Kim locus $X(\bZ_p)_{\{2,q\},2}^{(1,0)}$. Setting $a \coloneqq \frac{a_{\tau_q \tau_2}}{a_{\tau_2} a_{\tau_q}}$, this locus is defined in $X(\bZ_p)$ by the function
	\begin{equation}
		\label{eq: function}
		f(z) \coloneqq \Li_2(z) - a\log(z) \Li_1(z).
	\end{equation}
	
	\begin{lemma}
		\label{thm:coeffs}
		Let $\zeta \neq 1$ be a $(p-1)$-st root of unity in $\bZ_p$. Then the first three coefficients of the power series $f(\zeta + pt) = \sum_{k=0}^\infty c_k t^k \in \bQ_p[\![t]\!]$ are given by
		\begin{align*}
			c_0 &= \Li_2(\zeta),\\
			c_1 &= p(1-a) \frac{\Li_1(\zeta)}{\zeta},\\
			c_2 &= p^2(1-2a)\frac1{2\zeta(1 - \zeta)} - p^2 (1-a) \frac{\Li_1(\zeta)}{2\zeta^2}.
		\end{align*}
	\end{lemma}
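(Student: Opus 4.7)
The plan is to compute the Taylor expansion of $f(z) = \Li_2(z) - a \log(z) \Li_1(z)$ at $z = \zeta$ term by term, using the chain-rule relation $c_k = p^k f^{(k)}(\zeta)/k!$ together with the standard differential equations
\[ \Li_1'(z) = \frac{1}{1-z}, \quad \Li_2'(z) = \frac{\Li_1(z)}{z}, \quad \log'(z) = \frac{1}{z}, \]
and the crucial fact that $\log(\zeta) = 0$ since $\zeta$ is a root of unity. This last identity is what makes the stated formulas so clean, killing every summand in which an undifferentiated $\log$ factor survives after specialising.

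For $c_0$, substituting $z = \zeta$ gives $c_0 = \Li_2(\zeta) - a \log(\zeta)\Li_1(\zeta) = \Li_2(\zeta)$. For $c_1 = p f'(\zeta)$, I would first compute
\[ f'(z) = \frac{(1-a)\Li_1(z)}{z} - \frac{a \log(z)}{1-z}, \]
where the coefficient $1-a$ arises by combining $\Li_2'(z) = \Li_1(z)/z$ with the $-a(\log(z))'\Li_1(z) = -a\Li_1(z)/z$ contribution; the second term vanishes at $\zeta$, yielding $c_1 = p(1-a)\Li_1(\zeta)/\zeta$. For $c_2 = \tfrac{p^2}{2}f''(\zeta)$, differentiating once more produces
\[ f''(z) = \frac{1-2a}{z(1-z)} - \frac{(1-a)\Li_1(z)}{z^2} - \frac{a\log(z)}{(1-z)^2}, \]
where the $1-2a$ coefficient in the first term arises because differentiating $(1-a)\Li_1(z)/z$ contributes $+(1-a)/(z(1-z))$ while differentiating $-a\log(z)/(1-z)$ contributes $-a/(z(1-z))$. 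Evaluating at $\zeta$ and dividing by $2$ gives the claimed formula for $c_2$.

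There is no real obstacle here; the computation is a routine exercise in the product rule and the differential equations defining $\Li_n$. The only thing worth emphasising in the write-up is why $\log(\zeta) = 0$ for $\zeta$ a $(p-1)$-st root of unity in $\bZ_p$, which is immediate from $\log(\zeta^{p-1}) = (p-1)\log(\zeta)$ and $\log(1) = 0$, together with $p-1$ being a unit in $\bQ_p$. Hence every term containing a bare $\log$ factor drops out at $z = \zeta$, which is precisely what produces the compact form of $c_0$, $c_1$, and $c_2$.
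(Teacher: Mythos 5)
Your proof is correct and follows exactly the same route as the paper: expand $f(\zeta + pt)$ via Taylor coefficients $c_k = p^k f^{(k)}(\zeta)/k!$, compute $f'$ and $f''$ from the differential equations for $\Li_1$, $\Li_2$, $\log$, and use $\log(\zeta)=0$ to kill the surviving $\log$-terms. You spell out the intermediate derivative formulas which the paper leaves implicit, but the argument is the same.
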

	
	\begin{proof}
		The $k$-th coefficient of $f(\zeta + pt)$ is given by $p^k f^{(k)}(\zeta)/k!$. Computing the first two derivatives of $f(z)$ is straightforward using the differential equations $\Li_2'(z) = \Li_1(z)/z$ and $\Li_1'(z) = 1/(1-z)$. Plugging in $\zeta$ and using $\log(\zeta) = 0$ gives the claimed formulas.
	\end{proof}

	\begin{prop}
		\label{thm:depth2-size}
		Let $q$ be an odd prime and let $p \geq 5$ be a prime not equal to~$q$. Set $a \coloneqq \frac{a_{\tau_q \tau_2}}{a_{\tau_2} a_{\tau_q}}$ and assume that $v_p(a) < 0$ or $a \not\equiv \frac12 \bmod p$.	Then the Chabauty--Kim set $X(\bZ_p)_{\{2,q\},2}^{(1,0)}$ contains at most~$2$ points from each residue disc. 
	\end{prop}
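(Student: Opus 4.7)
The plan is to bound the number of roots in each residue disc via a Newton polygon argument on the defining series. Fix a nontrivial $(p-1)$-st root of unity $\zeta \in \bZ_p$ and expand $f(\zeta + pt) = \sum_{k \geq 0} c_k t^k$, where $f$ is as in~\eqref{eq: function}. The number of $t \in \bZ_p$ with $f(\zeta + pt) = 0$ is bounded by the total horizontal length of the Newton polygon segments of $(c_k)_k$ with slope $\leq 0$. To show this length is at most~$2$, it suffices to establish the strict inequality $v_p(c_k) > v_p(c_2)$ for every $k \geq 3$, which forces $(2, v_p(c_2))$ to be a vertex from which only strictly positive slopes emerge.

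The first step is to compute $v_p(c_2)$ using Lemma~\ref{thm:coeffs}. Since $p \geq 5$ and $\zeta \neq 1$ is a root of unity, the elements $2$, $\zeta$, and $1 - \zeta$ are all $p$-adic units; Lemma~\ref{thm:polylog-series-estimates} applied with $m = 1$, $k = 0$ moreover gives $v_p(\Li_1(\zeta)) \geq 1$. Thus the first summand of $c_2/p^2$ has valuation exactly $v_p(1-2a)$, while the second has valuation at least $v_p(1-a) + 1$. The two-case hypothesis is equivalent to $v_p(1-2a) < 1$ (equal to $0$ when $v_p(a) \geq 0$ and equal to $v_p(a)$ when $v_p(a) < 0$), and in both cases this valuation is strictly smaller than $v_p(1-a) + 1$. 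The first summand therefore strictly dominates and $v_p(c_2) = 2 + v_p(1-2a)$.

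For the second step I bound $v_p(c_k)$ from below for $k \geq 3$. Splitting $c_k = a_{\Li_2, k} - a\,(\log \cdot \Li_1)_k$, Lemma~\ref{thm:polylog-series-estimates} gives $v_p(a_{\Li_2, k}) \geq k - 2\lfloor \log_p(k)\rfloor$. For the product I expand the convolution $(\log \cdot \Li_1)_k = \sum_{j=1}^k \beta_j \gamma_{k-j}$ (the $j = 0$ term vanishes since $\log(\zeta) = 0$) and combine Lemma~\ref{thm:log-series} with Lemma~\ref{thm:polylog-series-estimates}: summands with $i = k - j \geq 1$ contribute valuation at least $(j - v_p(j)) + (i - \lfloor \log_p(i)\rfloor) \geq k - 2\lfloor \log_p(k)\rfloor$, while the boundary term $i = 0$ gains an extra factor $p$ from $\gamma_0 = \Li_1(\zeta)$ which absorbs the loss $v_p(k)$. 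An elementary monotonicity check, examining the jumps of $\lfloor \log_p(k)\rfloor$ at $k = p^r$, then shows $k - 2\lfloor \log_p(k) \rfloor > 2$ for all $k \geq 3$ whenever $p \geq 5$. Putting everything together yields $v_p(c_k) \geq v_p(a) + k - 2\lfloor \log_p(k)\rfloor > v_p(a) + 2$ when $v_p(a) < 0$ and $v_p(c_k) \geq k - 2\lfloor \log_p(k)\rfloor > 2$ when $v_p(a) \geq 0$; in both cases this strictly exceeds $v_p(c_2) = 2 + v_p(1-2a)$, completing the argument.

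The main obstacle I expect is the convolution estimate for $(\log \cdot \Li_1)_k$ together with its interaction with the case split on $v_p(a)$; the restriction $p \geq 5$ is essential, because for $p = 3$ the inequality $k - 2\lfloor \log_p(k)\rfloor > 2$ already fails at $k = 3$, so the Newton polygon approach with this threshold would not rule out a third root.
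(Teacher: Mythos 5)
Your proposal is correct and takes essentially the same route as the paper: it uses Lemma~\ref{thm:coeffs} to pin down $c_0,c_1,c_2$, invokes Lemmas~\ref{thm:log-series} and~\ref{thm:polylog-series-estimates} (via the convolution) to get $v_p(c_k)\geq k-2\lfloor\log_p(k)\rfloor$ for $k\geq 3$, and concludes from $v_p(c_k)>v_p(c_2)$ for all $k\geq 3$. The paper phrases the endgame via Strassmann's theorem and keeps the two cases $v_p(a)<0$ and $v_p(a)\geq 0$, $a\not\equiv\frac12$ separate, whereas you unify them through $v_p(c_2)=2+v_p(1-2a)$ and the Newton polygon; these are cosmetic rather than substantive differences, and your explicit convolution estimate for $\log\cdot\Li_1$ merely spells out a step the paper leaves implicit.
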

	
	\begin{proof}
		Let $\nu \coloneqq v_p(a)$ and assume first that $\nu = v_p(a) < 0$. Let $\zeta \neq 1$ be a $(p-1)$-st root of unity in $\bZ_p$. By \Cref{thm:coeffs}, the valuations of the first three coefficients of $f(\zeta + pt)$ are $\geq 2$, $\geq 2+\nu$, $= 2+\nu$, respectively. It follows from \Cref{thm:polylog-series-estimates} that the $k$-th coefficients of both $\Li_2(\zeta+pt)$ and $\log(\zeta+pt)\Li_1(\zeta+pt)$ have valuation $\geq k - 2 \lfloor \log_p(k) \rfloor$. 
		For $k \geq 3$ this is always $\geq 3$,
		so that all coefficients of $f(\zeta + pt)$ for $k \geq 3$ have valuation $\geq 3+\nu$. In conclusion, the minimal valuation of all coefficients is equal to $2+\nu$; it is attained at the $t^2$-coefficient and this is the last time it is attained. By Strassmann's Theorem it follows that $f(z)$ has at most~$2$ zeros on $U_{\zeta}$.
		
		Assume now that $\nu = v_p(a) \geq 0$ and $a \not\equiv \frac12 \bmod p$. In this case the first three coefficients of $f(\zeta+pt)$ have valuations $\geq 2$, $\geq 2$, $=2$, respectively, and all other coefficients have larger valuation. We conclude again by Strassmann's Theorem.
	\end{proof}
	
	\begin{rem}
		From \cite[Lemma~7.0.5]{betts:effective} one has an a priori bound of
		\[ \#X(\bZ_p)_{\{2,q\},2}^{(1,0)} \leq 8(p-2) + \frac{8(p-1)}{\log(p)} \]
		for all $p \neq q$. When \Cref{thm:depth2-size} applies, we get the improved bound
		\[ \#X(\bZ_p)_{\{2,q\},2}^{(1,0)} \leq 2(p-2) \]
		since there are $p-2$ residue discs, each containing at most~2 points of the locus. This bound is sometimes attained, e.g., $\#X(\bZ_{29})_{\{2,41\},2}^{(1,0)} = 54 = 2\cdot(29-2)$.
	\end{rem}
	
	In \Cref{thm:depth2-size}, the valuation of $a = \frac{a_{\tau_q \tau_2}}{a_{\tau_2} a_{\tau_q}}$ is determined by the valuation of $a_{\tau_q \tau_2}$ and the valuations of the $p$-adic logarithms $a_{\tau_2} = \log(2)$ and $a_{\tau_q} = \log(q)$. About the first we can show the following:
	
	\begin{lemma}
		\label{thm:aq2-valuation}
		For fixed~$q$, we have $v_p(a_{\tau_q \tau_2}) \geq 2$ for all but finitely many~$p$.
	\end{lemma}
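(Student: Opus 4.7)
The strategy is to combine the formula $a_{\tau_q \tau_2} = -\sum_i c_i \Li_2(t_i)$ from §\ref{sec:DCW-coeffs}, applied to a fixed Steinberg decomposition $[q] \otimes [2] = \sum_i c_i [t_i] \otimes [1 - t_i]$ in $E \otimes E$, with a uniform $p$-adic bound on dilogarithm values at rational arguments. The sum is finite, depends only on $q$, and has $c_i \in \bQ$ and $t_i \in \bQ \smallsetminus \{0,1\}$. For all but finitely many $p$---namely, after excluding $p = q$, the primes $\leq 3$, the primes occurring in the denominators of the $c_i$, and the primes dividing the numerator or denominator of any $t_i$ or $1 - t_i$---every $t_i$ lies in $\bZ_p^{\times}$ with $t_i \not\equiv 1 \bmod p$, and every $c_i$ lies in $\bZ_p$. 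It is then enough to show that $v_p(\Li_2(t)) \geq 2$ for every such $t$.

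To prove this last claim for $p \geq 5$, I would write $t = \zeta + p r$, where $\zeta$ is the unique $(p-1)$-st root of unity in $\bZ_p$ reducing to $t \bmod p$ (so $\zeta \neq 1$) and $r \in \bZ_p$. By \Cref{thm:polylog-series-formulas},
\[ \Li_2(t) = \sum_{k = 0}^{\infty} a_{2,k} r^k, \]
and since $r \in \bZ_p$ it suffices to verify $v_p(a_{2,k}) \geq 2$ for every $k$. The identity $\Li_2(\zeta) = \frac{p^2}{p^2 - 1} g_2(1/(1-\zeta))$, together with $g_2 \in \bZ_p[\![v]\!]$ from \Cref{thm:g-estimates} and $1/(1 - \zeta) \in \bZ_p$, gives $v_p(a_{2,0}) \geq 2$. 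The recursion in \Cref{thm:polylog-series-formulas} gives $a_{2,1} = (p/\zeta) \Li_1(\zeta) = -(p/\zeta) \log(1 - \zeta)$; since $\log$ sends $\bZ_p^{\times}$ into $p \bZ_p$, we get $v_p(a_{2,1}) \geq 2$. For $k \geq 2$, \Cref{thm:polylog-series-estimates} yields $v_p(a_{2,k}) \geq k - 2 \lfloor \log_p(k) \rfloor$, and a short case split on the value of $\lfloor \log_p(k) \rfloor$ shows this is always $\geq 2$ as soon as $p \geq 5$. Combining these bounds gives $v_p(\Li_2(t)) \geq 2$, whence $v_p(a_{\tau_q \tau_2}) \geq 2 + \min_i v_p(c_i) \geq 2$.

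The main obstacle is that the estimate for $k \geq 2$ genuinely degrades at small primes: for $p = 3$ and $k = 3$ the bound $k - 2 \lfloor \log_3(k) \rfloor$ drops to $1$, so the direct Taylor-series argument does not establish $v_3(\Li_2(t)) \geq 2$ in general. Since the statement allows finitely many exceptional primes, excluding $p = 3$ along with the other finitely many bad primes listed above causes no difficulty.
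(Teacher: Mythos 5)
Your proof is correct and follows essentially the same route as the paper: fix a Steinberg decomposition $[q] \otimes [2] = \sum_i c_i\,[t_i]\otimes[1-t_i]$ independent of $p$, use $a_{\tau_q\tau_2} = -\sum_i c_i \Li_2(t_i)$, exclude the finitely many primes appearing in $c_i$, $t_i$, $1-t_i$, and show $v_p(\Li_2(t)) \geq 2$ for $t \in \bZ_p$ with $t \not\equiv 0,1 \bmod p$ by examining the power-series coefficients on residue discs. The one difference is cosmetic: the paper asserts the dilogarithm bound already for $p \geq 3$, citing the Newton-polygon analysis of \Cref{thm:depth2-size} (which in fact proves the needed bound $v_p(a_{2,k}) \geq 2$ for $p = 3$ as well, once one checks $k = 3$ directly rather than relying on the crude estimate $k - 2\lfloor \log_p k\rfloor$); you instead restrict to $p \geq 5$ so that \Cref{thm:polylog-series-estimates} applies verbatim, discarding $p = 3$ among the finitely many exceptional primes. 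Either way the conclusion is the same, and your treatment of the $k = 0,1$ coefficients is correct and a bit more explicit than the paper's.
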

	
	\begin{proof}
		Recall from §\ref{sec:DCW-coeffs} that there are rational numbers $c_i \in \bQ$ and $t_i \in \bQ \smallsetminus \{0,1\}$ (independent of~$p$) such that $a_{\tau_q \tau_2} = -\sum_i c_i \Li_2(t_i).$
		Analysing the coefficients of the power series of $\Li_2(\zeta + pt)$ as in the proof of \Cref{thm:depth2-size}, one sees that for $p \geq 3$ all coefficients have valuation $\geq 2$, so that $v_p(\Li_2(x)) \geq 2$ for all $x \in \bZ_p$ with $x \not\equiv 0,1\bmod p$. In particular, if $p$ is not any of the finitely many primes occuring in the prime factorisation of $c_i$, $t_i$ or $1-t_i$, then $v_p(a_{\tau_q \tau_2}) \geq 2$.
	\end{proof}
	
	The valuation of a $p$-adic logarithm is related to the Wieferich property. Recall that $p$ is called a \emph{base-$b$ Wieferich prime} if $b^{p-1} \equiv 1 \bmod p^2$. A base-$2$ Wieferich prime is simply called a Wieferich prime.
	
	\begin{lemma}
		\label{thm:wieferich}
		Let $p$ be an odd prime and let $b > 1$ be an integer not divisible by~$p$. The valuation of the $p$-adic logarithm $\log(b)$ is given by
		$v_p(\log(b)) = v_p(b^{p-1} - 1)$. 
		In particular, $v_p(\log(b)) > 1$ if and only if~$p$ is a base-$b$ Wieferich prime.
	\end{lemma}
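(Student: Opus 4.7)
The plan is to reduce the computation of $v_p(\log(b))$ to that of $v_p(\log(1+x))$ for a small $x$, where the logarithm series is easy to control, by passing through the power $b^{p-1}$ provided by Fermat's little theorem.

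First, since $\gcd(b,p) = 1$, Fermat's little theorem gives $b^{p-1} \equiv 1 \bmod p$, so we may write $b^{p-1} = 1 + pc$ for some integer $c \in \bZ$; then $v_p(b^{p-1} - 1) = 1 + v_p(c)$. Next, applying the $p$-adic logarithm (which is a homomorphism on the multiplicative group of principal units, and more generally on $\bZ_p^\times$ once extended by $\log(\zeta) = 0$ for roots of unity, so that $(p-1)\log(b) = \log(b^{p-1})$), I would write
\[ (p-1) \log(b) = \log(1 + pc). \]
Since $p-1$ is a unit in $\bZ_p$, this gives $v_p(\log(b)) = v_p(\log(1 + pc))$.

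The heart of the argument is to verify that $v_p(\log(1 + pc)) = v_p(pc)$. This follows from a term-by-term estimate on the series $\log(1+y) = \sum_{k \geq 1} (-1)^{k+1} y^k/k$: the $k$-th term has valuation $k v_p(y) - v_p(k) \geq k v_p(y) - \log_p(k)$, so for $v_p(y) \geq 1$ and $p \geq 3$ one sees that for $k \geq 2$ the summand has valuation strictly greater than $v_p(y)$ (the function $x \mapsto x - \log_p(x)$ is increasing for $x \geq 1$ when $p \geq 3$, exactly as used in \Cref{thm:log-series}), while the $k=1$ term has valuation exactly $v_p(y)$. Applying this with $y = pc$ (which has $v_p(y) \geq 1$) gives $v_p(\log(1 + pc)) = v_p(pc) = v_p(b^{p-1} - 1)$, as desired.

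For the final assertion, $v_p(\log(b)) > 1$ translates via the equality $v_p(\log(b)) = v_p(b^{p-1} - 1)$ into $v_p(b^{p-1} - 1) \geq 2$, i.e., $b^{p-1} \equiv 1 \bmod p^2$, which is precisely the definition of a base-$b$ Wieferich prime. The only real subtlety is checking that for odd $p$ the leading term of the logarithm series genuinely dominates; this is routine once the estimate $k - \log_p(k) > 1$ for $k \geq 2$ and $p \geq 3$ is noted.
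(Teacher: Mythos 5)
Your proposal is correct and follows essentially the same route as the paper: both pass from $\log(b)$ to $\log(b^{p-1})$ using the homomorphism property and the unit $p-1$, then invoke the fact that $v_p(\log(x)) = v_p(x-1)$ on $1 + p\bZ_p$. The paper states this last fact without proof, while you supply the term-by-term Newton polygon estimate justifying it.
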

	
	\begin{proof}
		We have $\log(b) = \frac1{p-1} \log(b^{p-1})$, hence the $p$-adic valuation of $\log(b)$ agrees with the valuation of $\log(b^{p-1})$. By Fermat's little theorem, $b^{p-1} \equiv 1 \bmod p$. For $x \in 1 + p\bZ_p$ one has $v_p(\log(x)) = v_p(x-1)$. The claim follows.
	\end{proof}
	
	It is conjectured that infinitely many Wieferich primes exist, although they occur very scarcely. Heuristically, the number of Wieferich primes below~$x$ grows like $\log(\log(x))$. The only currently known Wieferich primes are $1093$ and $3511$. Interestingly, it is also not known whether there are infinitely many \emph{non}-Wieferich primes, although this would follow from the abc conjecture \cite{silverman:wieferich}.
	
	\subsection{Large loci}
	
	We can explain the observation that $\#X(\bZ_p)_{\{2,q\},2}^{(1,0)} \approx 2p$ when $p$ is a base-2 or base-$q$ Wieferich prime. Usually the inequality $v_p(a_{\tau_q \tau_2}) \geq 2$ from \Cref{thm:aq2-valuation} is an equality. In this case, the Wieferich property and \Cref{thm:wieferich} imply that the valuation~$\nu$ of $a = \frac{a_{\tau_q \tau_2}}{a_{\tau_2} a_{\tau_q}}$ is negative. For most $\zeta$ we have $v_p(\Li_1(\zeta)) = 1$ (the primitive $6$-th roots of unity being an exception). Then the first three coefficients of $f(\zeta + pt)$ from \Cref{thm:coeffs} have valuations $\geq2$, $=2+\nu$, $=2+\nu$, respectively, and all subsequent coefficients have larger valuation. The Newton polygon has then exactly two segments of non-positive slope: negative slope from 0 to 1 and slope zero from 1 to 2. It follows that $f(z)$ has exactly two roots in the residue disc $U_{\zeta}$. With most of the $p-2$ residue discs containing~$2$ points, the Chabauty--Kim locus contains roughly $2p$ points in total.
	
	This explains why the Chabauty--Kim loci are exceptionally large when $p = 1093$ or $p = 3511$ since these are base-2 Wieferich primes. It also explains why the locus for $q = 19$ and $p = 43$ is exceptionally large since $43$ is a base-$19$ Wieferich prime. Another large locus (of size~$42$) occurs for $q = 47$ and $p = 23$. This one is not explained by a Wieferich property but rather the fact that the valuation $v_{23}(a_{\tau_{47} \tau_2}) = 1$ is smaller than expected. The prime $p = 23$ belongs to the finitely many exceptions in \Cref{thm:aq2-valuation}, causing $\nu = v_p(a) = -1$ to be negative.
	
	\subsection{Typical loci}
	We can also explain heuristically why $\# X(\bZ_p)_{\{2,q\}}^{(1,0)} \approx p$ almost always when $p$ is not a base-2 or base-$q$ Wieferich prime. Typically, $a_{\tau_q \tau_2}$ has valuation~2, so that $a = \frac{a_{\tau_q \tau_2}}{a_{\tau_2} a_{\tau_q}}$ has valuation~$\nu = 0$. Consider first the case that $a \not\equiv \frac12 \bmod p$. For most $\zeta$ we have $v_p(\Li_2(\zeta)) = 2$. On the residue discs of such~$\zeta$, the valuations of the first three coefficients of $f(\zeta + pt)$ given in \Cref{thm:coeffs} are then $=2, \geq 2, =2$, respectively, and all subsequent coefficients have valuation~$\geq 3$. After normalising, the power series reduces to a polynomial in $\bF_p[t]$ of degree~$2$ with nonzero constant coefficient. If its discriminant behaves like a random element in $\bF_p$, the cases of having two simple roots or no roots in $\bF_p$ should occur about half of the time each, with a small leftover probability of $1/p$ of having a double root. 
	By Hensel's lemma, we can therefore expect the $p-2$ residue discs to be split roughly evenly between containing $0$ points and $2$ points of the Chabauty--Kim locus, amounting to $\approx p$ points in total.
	
	Assume now that $a \equiv \frac12 \bmod p$. In this case, using that $v_p(\Li_1(\zeta)) = 1$ for most $(p-1)$-st roots of unity~$\zeta$ (primitive 6-th roots of unity being an exception), the Newton polygon of $f(\zeta + pt)$ usually has just a single non-positive slope, so that almost all residue discs contain exactly~$1$ point of the locus. The total size is then $\approx p$ as well, despite distributing differently over the residue discs than in the case $a \not\equiv \frac12 \bmod p$.

	\section{Verifying Kim's Conjecture}
	\label{sec:verifying-kim}
	
	We now verify instances of \Cref{kim-conjecture-refined} for the thrice-punctured line over $\bZ[1/6]$ in depth~$4$, saying that the refined Chabauty--Kim locus $X(\bZ_p)_{\{2,3\},4}^{\min}$ consists exactly of the 21 points 
	\[ X(\bZ[1/6]) = \left\{2, \frac12, -1, 3, \frac13, \frac23, \frac32, -\frac12,-2, 4, \frac14, \frac34, \frac43, -\frac13, -3, 9, \frac19, \frac89, \frac98, -\frac18, -8 \right\}. \]
	(Finding $X(\bZ[1/6])$ boils down to finding all pairs of consecutive integers containing only the prime factors~2 and~3. The fact that $(1,2), (2,3), (3,4), (8,9)$ is a complete list of such pairs was proved in 1342 by Gersonides, also known as Levi ben Gershon. The proof was published in his book \emph{The Harmony of Numbers}.)
	
	Recall from \Cref{thm:kim-conjecture-reduction} that it suffices to verify Conjecture \ref{kim-conjecture-sigma} for the polylogarithmic depth~$4$ quotient of the fundamental group and for the two refinement conditions $\Sigma = (1,1)$ and $\Sigma = (1,0)$. Recall also that the conjecture for $\Sigma = (1,1)$ was already checked for all $p < 10^5$ in \cite[Remark~3.6]{BBKLMQSX} (and is proved in higher depth for arbitrary~$p$, see \Cref{rem:11-locus}.) Therefore it is enough to look at the case $\Sigma = (1,0)$ and show that the inclusion
	\[ \{-3,-1,3,9\} = X(\bZ[1/6])_{(1,0)} \subseteq X(\bZ_p)_{\{2,3\},\PL,4}^{(1,0)} \]
	is an equality. As shown in §\ref{sec:kim-functions}, the locus in question is defined by the two equations
	\begin{align}
		\label{eq:f2}
		&f_2(z) \coloneqq \log(2) \log(3) \Li_2(z) + \Li_2(3) \log(z) \Li_1(z) = 0,\\
		&f_4(z) \coloneqq  \det \begin{pmatrix}
			\Li_4(z) & \log(z) \Li_3(z) & \log(z)^3 \Li_1(z) \\
			\Li_4(3) & \log(3) \Li_3(3) & \log(3)^3 \Li_1(3) \\
			\Li_4(9) & \log(9) \Li_3(9) & \log(9)^3 \Li_1(9) 
		\end{pmatrix} = 0. 
		\label{eq:f4}
	\end{align}
	
	The first equation~\eqref{eq:f2} defines the depth~2 Chabauty--Kim locus $X(\bZ_p)_{\{2,3\},2}^{(1,0)}$ and we explained in §\ref{sec:depth2-loci} how it can be computed. In order to verify Kim's Conjecture in depth~4 it suffices to check that $f_4(z) \neq 0$ for every point~$z$ of the depth~2 locus which is not in $\{-3,-1,3,9\}$.
	
	We demonstrate this for $p = 5$, continuing the example from §\ref{sec: depth2-example}. There we found that the depth~2 locus contains two points in addition to $\{-3,-1,3,9\}$: one of them is $z = 2$ and the other is given in~\eqref{eq:extra-point}.
	In Sage, we can define the depth 4 function $f_4$ from Eq.~\eqref{eq:f4} as follows:
	\begin{minted}[frame=lines,baselinestretch=1.2,bgcolor=gray!8,
		fontsize=\footnotesize]{python}
# p-adic polylogarithms
K = Qp(5)
log = lambda z: K(z).log()
Li = [lambda z,n=n: K(z).polylog(n) for n in range(5)]

# our depth-4 function
def f(z):
    rows = [[Li[4](x), log(x)*Li[3](x), log(x)^3*Li[1](x)] for x in [z,3,9]]
    return matrix(rows).determinant()
	\end{minted}
	We can check that the depth~4 function does indeed vanish on $-3,-1,3,9$:
	\begin{minted}[frame=lines,baselinestretch=1.2,bgcolor=gray!8,
		fontsize=\footnotesize]{python}
f(-3)  # => O(5^20)
f(-1)  # => O(5^28)
f(3)  # => O(5^20)
f(9)  # => O(5^20)
	\end{minted}
	Now we verify that the function does \emph{not} vanish on the extra points~$2$ and $z_0$:
\begin{minted}[frame=lines,baselinestretch=1.2,bgcolor=gray!8,fontsize=\footnotesize]{python}
f(2)
# => 4*5^13 + 4*5^14 + 3*5^15 + 5^16 + 3*5^18 + 3*5^19 + O(5^20)
f(3 + 5^2 + 2*5^3 + 5^4 + 3*5^5 + 5^6 + 5^7 + 5^9 + 2*5^10 + 3*5^11 + 2*5^12 + O(5^13))
# => 4*5^13 + O(5^14)
	\end{minted}
	This shows that \Cref{kim-conjecture-refined} holds for $S = \{2,3\}$ and $p = 5$. 
	
	The accompanying Sage code has a function
	\begin{center}
		\texttt{CK\_depth\_4\_locus(p, q, N, coeffs)}
	\end{center}
	which computes an approximation of the locus $X(\bZ_p)_{\{2,q\},\PL,4}^{(1,0)}$ for any $p$ and $q$. It takes the tuple of $p$-adic coefficients $(a_{\tau_q \tau_2}, a, b, c)$ as an argument, where $a_{\tau_q \tau_2}$ is the DCW coefficient appearing in the depth~2 function~\eqref{eq:depth2-function-intro}, and $(a,b,c)$ are the coefficients in the depth~4 function \eqref{eq:depth4-equation-intro}. At the moment we only know all these constants explicitly in the case $q = 3$ where $a_{\tau_3 \tau_2} = -\Li_2(3)$ and for $(a,b,c)$ one can either use the formulas from \Cref{thm: periods} or $2 \times 2$-minors of the matrix in \eqref{eq:f4}. The code is however flexible enough to compute the Chabauty--Kim loci for other primes~$q$ in the future. The function \texttt{Z\_one\_sixth\_coeffs(p,N)} can be used to compute the $p$-adic coefficients for $q = 3$ conveniently. Let us use this to determine the depth~4 locus for $p = 7$:
	\begin{minted}[frame=lines,baselinestretch=1.2,bgcolor=gray!8,
		fontsize=\footnotesize]{python}
p = 7; q = 3; N = 10
coeffs = Z_one_sixth_coeffs(p,N)
CK_depth_4_locus(p,q,N,coeffs)
\end{minted}
This outputs the following list of $7$-adic numbers:
\begin{minted}[frame=lines,baselinestretch=1.2,bgcolor=gray!8,
	fontsize=\footnotesize]{python}
[2 + 7 + O(7^9),
 3 + O(7^9),
 4 + 6*7 + 6*7^2 + 6*7^3 + 6*7^4 + 6*7^5 + 6*7^6 + 6*7^7 + 6*7^8 + O(7^9),
 6 + 6*7 + 6*7^2 + 6*7^3 + 6*7^4 + 6*7^5 + 6*7^6 + 6*7^7 + O(7^8)]
	\end{minted}
	These are precisely the $\{2,3\}$-integral points $9, 3, -3, -1$, which shows that Kim's Conjecture for $S = \{2,3\}$ also holds with the auxiliary prime~$p = 7$. 
	
	\begin{rem}
		\label{rem: common-zeros-inexact}
		Computing the common zero set of two inexact functions is not a well-posed problem. For the locus defined by $f_2(z) = f_4(z) = 0$ we can only compute approximations of the roots of $f_2$ and check whether $f_4(z)$ is \emph{indistinguishable} from~$0$ up to the given precision. If the precision is chosen too low one might not be able to rule out certain points to be roots of~$f_4$ and end up with a too large set. However, in all cases we considered, increasing the precision if necessary, we were always able to eliminate all points other than the four points $\{-3,-1,3,9\}$ which we know to be common zeros of~$f_2$ and~$f_4$.
	\end{rem}
	
	Computing depth~4 loci by the methods layed out in this paper we have verified:
	
	\begin{thm}
		Kim's Conjecture for $S = \{2,3\}$ and refinement condition $\Sigma = (1,0)$ holds for the polylogarithmic depth~4 quotient, i.e., the inclusion
		\[ \{-3,-1,3,9\} = X(\bZ[1/6])_{(1,0)} \subseteq X(\bZ_p)_{\{2,3\},\PL,4}^{(1,0)} \]
		is an equality, for all auxiliary primes~$p$ with $5\leq p < 10{,}000$.
	\end{thm}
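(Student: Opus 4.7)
The plan is to reduce everything to a finite computation indexed by the primes $p$ in the range $5 \leq p < 10{,}000$ and carry it out using the Sage implementation described in the previous sections. Fix such a prime $p$. By \Cref{thm:kim-conjecture-reduction} combined with \Cref{rem:11-locus} (which handles $\Sigma = (1,1)$ unconditionally in depth $\geq 2$), it suffices to check the single inclusion
\[ X(\bZ_p)_{\{2,3\},\PL,4}^{(1,0)} \subseteq \{-3,-1,3,9\}. \]
By \Cref{thm: 10 equations} and \Cref{thm:det-equation}, the locus on the left is contained in the common zero set of the two Coleman functions $f_2$ and $f_4$ defined by Equations~\eqref{eq:f2} and~\eqref{eq:f4}. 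The four points $-3,-1,3,9$ are known zeros of both functions, so the task reduces to showing that no other $p$-adic point is a simultaneous zero.

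First, I would compute the depth~2 locus $X(\bZ_p)_{\{2,3\},2}^{(1,0)}$, i.e., the zero set of $f_2$. By the procedure of §\ref{sec:depth2-loci}, this requires computing the DCW coefficient $a_{\tau_3 \tau_2} = -\Li_2(3)$ to suitable $p$-adic precision, assembling the power series expansion of $f_2$ on each residue disc $U_\zeta$ (with $\zeta$ a $(p-1)$-st root of unity $\neq 1$) using \Cref{thm:log-series} and \Cref{thm:polylog-series-formulas}, and extracting the roots in each disc via Hensel's lemma (\Cref{thm:inexact-roots}). This yields a finite list $L_p$ of $p$-adic numbers, which always contains $\{-3,-1,3,9\}$ and typically at most $\approx 2(p-2)$ further candidates by the bounds in §\ref{sec: sizes}.

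Next, for each $z \in L_p \smallsetminus \{-3,-1,3,9\}$ I would evaluate $f_4(z)$ to sufficient precision to distinguish it from zero. Concretely, using the power series expansions of $\log, \Li_1, \Li_3, \Li_4$ obtained by the same methods as above, one computes each entry of the $3\times 3$ matrix in~\eqref{eq:f4} and then the determinant. If the result has valuation strictly below the a priori precision of the computation, then $f_4(z) \neq 0$ and $z$ does not lie in the refined depth~4 locus. Running this for all $\approx 1229$ primes in the range and checking that the rejection succeeds in each case completes the proof.

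The main obstacle is the precision management discussed in \Cref{rem: common-zeros-inexact}: at finite working precision $N$ one can only rule out a candidate $z$ when $v_p(f_4(z))$ is provably less than the precision to which $f_4(z)$ has been computed; a priori nothing forbids a spurious zero of $f_2$ from making $f_4$ have very large valuation. This is handled pragmatically by iteratively increasing $N$ until every extra candidate is eliminated, a procedure whose termination for each fixed $p$ in the range is verified empirically. A secondary difficulty is the sheer volume of the computation for the two Wieferich primes $p = 1093$ and $p = 3511$, where $\#L_p \approx 2p$ and thousands of candidates must be tested, each requiring the evaluation of five $p$-adic polylogarithms to nontrivial precision; this is mitigated by the efficient power series algorithms of §\ref{sec:depth2-loci} and the bounds in \Cref{thm:digits-estimate}.
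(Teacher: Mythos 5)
Your approach matches the paper's: compute the depth-2 locus as the zero set of $f_2$ via the power-series/Hensel machinery of §\ref{sec:depth2-loci}, then eliminate each candidate outside $\{-3,-1,3,9\}$ by verifying $f_4$ is provably nonzero at it, raising the working precision where \Cref{rem: common-zeros-inexact} demands it. One small slip: invoking \Cref{thm:kim-conjecture-reduction} and \Cref{rem:11-locus} at the outset is superfluous here, since the theorem is already stated for $\Sigma = (1,0)$ alone and the reverse inclusion is automatic from diagram~\eqref{eq:CK-diagram-refined}; those two results are instead what one uses to pass from this theorem to \Cref{thm:main}.
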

	
	\begin{cor}
		\label{thm:main}
		Conjecture~\ref{kim-conjecture-refined} for $S = \{2,3\}$ holds in depth~$4$ for all auxiliary primes~$p$ with $5 \leq p < 10{,}000$.
	\end{cor}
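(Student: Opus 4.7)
The plan is to deduce this corollary directly by combining the preceding theorem with the reduction established in \Cref{thm:kim-conjecture-reduction} and the verification of the $(1,1)$-case recalled in \Cref{rem:11-locus}. Concretely, by \Cref{thm:kim-conjecture-reduction}, to establish \Cref{kim-conjecture-refined} for $S=\{2,3\}$ in depth~$4$ it suffices to verify \Cref{kim-conjecture-sigma} for the polylogarithmic depth~$4$ quotient for the two refinement conditions $\Sigma = (1,1)$ and $\Sigma = (1,0)$; the remaining refinement conditions are obtained from these two by the $S_3$-action on $\bP^1\smallsetminus\{0,1,\infty\}$, which is compatible with the Chabauty--Kim loci.

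For the $(1,1)$-component, I would invoke \Cref{rem:11-locus}: the equations $\log(z) = 0$ and $\Li_2(z) = 0$ already cut out $X(\bZ_p)_{\{2,3\},\PL,4}^{(1,1)}$ (in fact this set is independent of $q$ and coincides with $X(\bZ_p)_{\{2\},\PL,4}^{(1)}$), and the equality with $X(\bZ[1/6])_{(1,1)} = \{-1\}$ has been verified computationally for all odd primes $p < 10^5$, in particular for all $p$ in the range $5 \leq p < 10{,}000$.

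For the $(1,0)$-component, the equality $X(\bZ[1/6])_{(1,0)} = X(\bZ_p)_{\{2,3\},\PL,4}^{(1,0)}$ for all primes $p$ with $5 \leq p < 10{,}000$ is exactly the content of the immediately preceding theorem. Putting the two cases together via \Cref{thm:kim-conjecture-reduction} yields the corollary.

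The substantive content therefore lies entirely in the preceding theorem, whose verification is the genuine obstacle; the corollary itself is a formal consequence, and the proof is a one-line citation. The only thing to double check is that the range of primes $5 \leq p < 10{,}000$ is consistent between the input ($(1,0)$-verification in the preceding theorem, $(1,1)$-verification up to $10^5$ from \Cref{rem:11-locus}) and the output, which it is.
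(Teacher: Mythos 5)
Your proposal is correct and follows exactly the paper's reasoning: the corollary is a formal consequence of the immediately preceding theorem (the $(1,0)$-case), \Cref{rem:11-locus} (the $(1,1)$-case for $p < 10^5$), and \Cref{thm:kim-conjecture-reduction}, which reduces the full conjecture to these two refinement conditions via the $S_3$-action. The one small detail worth noting (though both you and the paper treat it implicitly) is that \Cref{rem:11-locus} verifies the $(1,1)$-case in depth~$2$, which then implies the depth~$4$ statement needed by \Cref{thm:kim-conjecture-reduction} because the Chabauty--Kim loci shrink as the depth increases.
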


	\printbibliography
\end{document}